\newtheorem{theorem}{Theorem}[section]
\newtheorem{definition}{Definition}[section]
\newtheorem{lemma}{Lemma}[section]
\newtheorem{remark}{Remark}[section]%
\newtheorem{conjecture}{Conjecture}[section]
\newcommand{\bbm}{\boldsymbol{\bar \mu}}
\newcommand{\bbn}{\boldsymbol{\bar \nu}}
\newcommand{\A}{\boldsymbol{A}}
\newcommand{\X}{\boldsymbol{X}}
\renewcommand{\H}{\bs{H}}
\renewcommand{\S}{\mathbf{S}}
\renewcommand{\O}{\mathcal{O}}
\newcommand{\td}{\tilde\delta}
\newcommand{\RR}{\mathbf{R}}
\newcommand{\Z}{\bs{Z}}
\newcommand{\E}{\mathbf{E}}
\newcommand{\rank}{\mathrm{rank}}
\newcommand{\suchthat}{\mathrm{s.t.}\;}
\newcommand{\tr}{\mathrm{Tr}}
\renewcommand{\d}{\bs d}
\newcommand{\e}{\bs {1}}
\newcommand{\I}{\bs{I}}
\newcommand{\bs}{\boldsymbol}
\renewcommand{\u}{\bs{u}}
\renewcommand{\v}{\bs{v}}
\newcommand{\x}{\bs {x}}
\newcommand{\y}{\bs {y}}
\newcommand{\Y}{\bs {Y}}
\newcommand{\z}{\bs{z}}
\newcommand{\1}{\bs{1}}
\newcommand{\W}{\bs{W}}
\newcommand{\bu}{\boldsymbol{\bar{u}}}
\newcommand{\bv}{\boldsymbol{\bar{v}}}
\newcommand{\bX}{\boldsymbol{\bar{X}}}
\newcommand{\bY}{\boldsymbol{\bar{Y}}}
\newcommand{\R}{\bs{R}}
\newcommand{\Q}{\bs{Q}}
\newcommand{\BL}{\boldsymbol{\Lambda}}
\newcommand{\BX}{\bs{\Xi}}
\newcommand{\0}{\bs{0}}
\newcommand{\EX}{\mathbf{E}}
\DeclareMathOperator{\Var}{Var}
\DeclareMathOperator{\prob}{Pr}
\title{Provably Finding a Hidden Dense Submatrix among Many Planted Dense Submatrices via Convex Programming}
\author[1]{Valentine Olanubi}
\author[2]{Phineas Agar}
\author[3]{Brendan Ames}
\affil[1]{Department of Mathematics, University of Alabama, Tuscaloosa, AL, USA, {vaolanubi@crimson.ua.edu}}
\affil[2]{Department of Mathematics, University of Alabama, Tuscaloosa, AL, USA, {paagar@crimson.ua.edu}}
\affil[3]{{School of Mathematical Sciences}, University of Southampton, Southampton, UK, {b.ames@soton.ac.uk}}
\begin{document}

 \maketitle

\begin{abstract}
We consider the densest submatrix problem, which seeks the submatrix of fixed size of a given binary matrix that contains the most nonzero entries. This problem is a natural generalization of fundamental problems in combinatorial optimization, e.g., the densest subgraph, maximum clique, and maximum edge biclique problems, and has wide application in the study of complex networks.
Much recent research has focused on the development of sufficient conditions for exact solution of the densest submatrix problem via convex relaxation. The vast majority of these sufficient conditions establish identification of the densest submatrix within a graph containing exactly one large dense submatrix hidden by noise. The assumptions of these underlying models are not observed in real-world networks, where the data may correspond to a matrix containing many dense submatrices of varying sizes.

We extend and generalize these results to the more realistic setting where the input matrix may contain \emph{many} large dense subgraphs. Specifically, we establish sufficient conditions under which we can expect to solve the densest submatrix problem in polynomial time for random input matrices sampled from a generalization of the stochastic block model. Moreover, we also provide sufficient conditions for perfect recovery under a deterministic adversarial model. Numerical experiments involving randomly generated problem instances and real-world collaboration and communication networks are used to empirically verify the theoretical phase-transitions to perfect recovery given by these sufficient conditions.
\end{abstract}

\subsection*{Data Availability Statement}

The real-world network datasets analysed in this study are publicly available. 
The Jazz Collaboration Network, Zachary's Karate Club, Dolphins, and Les 
Mis\'{e}rables networks are widely available through standard network analysis 
libraries and repositories. The character interaction network data for George 
R.R.\ Martin's \textit{A Song of Ice and Fire} series is available at 
\url{https://networkofthrones.com/}.

Python, MATLAB, and R implementations of the ADMM algorithm (Algorithm~1) 
used to produce the numerical results in this study are available in 
open-source repositories at \url{https://github.com/bpames/DENSUB} (Python), 
\url{https://github.com/pbombina/admmDSM} (MATLAB), and 
\url{https://cran.r-project.org/web/packages/admmDensestSubmatrix/index.html} 
(R). Numerical experiments were conducted on the IRIDIS High Performance 
Computing Facility at the University of Southampton.

\subsection*{Author Contributions and Conflicts of Interests}
All authors contributed equally to this work.
The authors declare that they have no conflicts of interest.
\section{Introduction}

We consider the \emph{densest submatrix problem (DSM)}, in which one aims to identify the submatrix of given size containing a maximum number of nonzero entries; see \cite{lanciano2024survey} for a survey of the historical and recent study of the DSM.
The densest submatrix problem can be considered a generalization of the \emph{densest bipartite subgraph problem}:
given a bipartite graph \( G = (V, U, E) \) and integers \( k_1 \) and \( k_2 \), the objective is to extract a subgraph induced by \( k_1 \) nodes from \( V \) and \( k_2 \) nodes from \( U \) that maximizes the number of edges. This further generalizes the \emph{densest subgraph problem}: given graph $G$ and integer $k$, find the $k$-node subgraph containing maximum number of edges. 

The problem of discovering dense subgraphs has garnered significant attention due to its broad applicability in domains such as bioinformatics~\cite{malod2010maximum, gomez2008identification}, financial and economic network analysis~\cite{boginski2006mining}, social network mining~\cite{balasundaram2011clique, pattillo2011clique}, telecommunications~\cite{jain2003impact}, clustering~\cite{schaeffer2007graph}, computational biology, and image processing. In many such contexts, densely connected substructures often correspond to functionally meaningful modules, such as communities in social graphs or coherent biological or transactional clusters.

From a computational standpoint, the densest subgraph problem of fixed size is known to be NP-hard~\cite{karp2009reducibility} and is considered difficult to approximate within any constant factor in general graphs~\cite{andersen2007finding}. This intractability arises from its close relationship to the maximum clique problem, a classical NP-hard problem. In fact, it has been shown that unless certain complexity-theoretic assumptions fail, no polynomial time approximation schemes (PTAS) exist for the densest \( k \)-subgraph problem~\cite{feige2002relations, khot2006ruling, alon1998finding, alon2011inapproximability}.

Despite this worst-case intractability, there is a growing body of literature establishing sufficient conditions for exact solution of the densest submatrix problem, or special cases such as the maximum clique and densest subgraph problem, via spectral methods and/or convex relaxation~\cite{ames2015guaranteed, candes2011robust, chen2014statistical, doan2013finding, ames2011nuclear, feige2000finding, bombina2020convex, sohn2025sharp, dadon2024detection, omer2023harnessing,omer2025maximum}. Most of these results establish that if the given problem instance corresponds to a single dense submatrix hidden by sparse noise, then this hidden submatrix is the exact solution of some convex optimization problem.

Our proposed convex relaxation adopts a nuclear norm minimization strategy, analogous to that used in robust PCA (RPCA)~\cite{chandrasekaran2011rank}, to capture low-rank structure while tolerating sparse noise. This adaptation enables recovery guarantees in planted submatrix settings that generalize and improve upon earlier works.
For example,
Ames and Vavasis~\cite{ames2011nuclear} proposed a nuclear norm relaxation for the maximum-edge biclique problem, providing recovery guarantees based on Chernoff bounds and spectral norm analysis. Ames~\cite{ames2015guaranteed} further improved this approach by combining nuclear norm minimization with an $\ell_1$-penalty. Their model decomposes the adjacency matrix into low-rank and sparse components, linking the formulation to RPCA. 
Bombina and Ames~\cite{bombina2020convex} established tight recovery guarantees for planted dense $m \times n$ submatrices under constant signal assumptions, proving that perfect recovery is possible when $\min\{m,n\} \ge \Omega((\log N)^{3/2})$ under sparse noise. They also demonstrated phase transition phenomena for convex formulations. However, their analysis was restricted to homogeneous or block-constant models and did not account for general heterogeneous structures where each block has its own distinct probability distribution.

A key common factor of these earlier results is the foundational assumption that the input matrix contains a single dense hidden submatrix. 
In contrast, we generalize these results to a more realistic setting where the input matrix may contain \emph{many dense submatrices}.
Specifically,
we consider a heterogeneous rectangular stochastic block model, where each block is governed by its Bernoulli probability distribution. Specifically, the observed binary matrix \( \A \in \{0,1\}^{M \times N} \) is constructed by partitioning the row indices \( \{1, \dots, M\} \) into \( k \) disjoint subsets \( U_1, \dots, U_{k} \) and the column indices \( \{1, \dots, N\} \) into \( k \) disjoint subsets \( V_1, \dots, V_{k} \). For each block pair \( (U_r, V_s) \), the entries \( A_{ij} \) with \( i \in U_r \) and \( j \in V_s \) are independently sampled from a Bernoulli distribution with parameter \( p_{rs} \), defining the local edge probability of that rectangular block.

\subsection{Contributions}

The primary contribution of our work is a sufficient condition for identification of a planted dense submatrix hidden among potentially many other dense submatrices.
We will see that this result can be specialized to provide sufficient conditions for identification of planted cliques, bicliques, and dense subgraphs in graphs containing many such combinatorial objects.
In all cases, we will recover the hidden submatrix or subgraph from the solution of a convex relaxation of the densest $m\times n$-submatrix problem, which we will formally define next.

\begin{definition}[Densest Submatrix Problem]
Given binary matrix $\A \in \{0,1\}^{M\times N}$ and integers $m \in \{1,2,\dots, M\}$ and $n \in \{1,2,\dots, N\}$, the \emph{densest $m\times n$-submatrix problem} seeks the submatrix of $\A$ with $m$ rows and $n$ columns that contains the \emph{most non-zero entries}.    
\end{definition}

Our main theorem will provide sufficient conditions under which exact recovery of the planted structure is guaranteed with high probability as defined below.

\begin{remark}[Interpretation of “with high probability”]
In this work, the phrase “with high probability” (abbreviated as \emph{w.h.p.}) refers to events whose probability converges to 1 as the problem size increases. Formally, for a sequence of random events \( \{E_n\} \), we say that \( E_n \) occurs with high probability if
$
\Pr(E_n) \to 1 
$
\emph{polynomially} as $n\to \infty$ in the sense that
$\Pr(E_n) \ge 1 - 1/p(n)$ for some polynomial function $p(n)$ with $p(n) \to \infty$ as $n\to\infty$.
\end{remark}


The planted dense matrix problem is NP-Hard by reduction from the maximum clique problem. As such, we should not expect to be able to solve it efficiently in all problem instances.
We next define a probabilistic model for random matrices with different expected densities across different blocks.
We will use this block structure and these densities to characterize ``nice" matrices, for which we should expect to be able to identify their dense submatrices.

\begin{definition}[Planted Submatrix Model]
\label{def:psm}
Consider partitions $(U_1, U_2, \dots, U_{k})$ and $(V_1, V_2, \dots, V_{k})$ of $\{1, 2,\dots, M\}$ and $\{1,2,\dots, N\}$ respectively.
We say that matrix $\A \in \{0,1\}^{M\times N}$ is sampled from the \emph{planted submatrix model} corresponding to this partition if each entry of $\A$ is independently sampled according to the Bernoulli probability distribution
\[\Pr(A_{ij} = 1) = p_{rs}, \hspace{0.25in}
    \Pr(A_{ij} = 0) = 1 - p_{rs}\]
for each $i \in U_r$, $j\in V_s$, $r\in \{1,2, \dots, k\}$ and $s \in \{1,2, \dots, k\}$ 
for probability matrix $\boldsymbol{P} = [p_{rs}] \in \mathbb{R}^{k \times k}$ with values in the interval $[0,1]$.
\end{definition}

We next state a simplified version of the planted submatrix model, which we will use to present a simplification of our main recovery guarantee; we will present a more detailed analysis and sufficient conditions for recovery in \cref{sec:DSM}.

\begin{definition}[Balanced Planted Submatrix Model]
We say that square matrix $\A\in\{0,1\}^{N\times N}$ is sampled from the \emph{balanced planted submatrix model} if it is sampled from the planted submatrix model with partitions with square blocks of the same size:
$$
    m_1 = m_2 = \cdots = m_k = n_1 = n_2 = \cdots = n_k.
$$
\end{definition}

Note that both the planted submatrix model and the balanced planted submatrix model are generalizations of the stochastic block model (SBM), which is commonly used in the analysis of clustering and community detection algorithms; see \cite{abbe2023communitydetectionstochasticblock} for a detailed discussion of clustering and community detection under the SBM. 
Under this model, the binary matrix \( \A \in \{0,1\}^{M \times N} \) is structured into rectangular blocks, where each block \( \A(U_r, V_s) \) is independently sampled from an Erdös–Rényi distribution with edge probability \( p_{rs} \). This results in a heterogeneous block structure in which each block may follow a different connectivity regime.

We illustrate the planted submatrix and balanced submatrix models in \cref{fig:heterogeneous_planted_example}.
We sample two matrices, one balanced and the other unbalanced, with $6$ especially dense submatrices in the diagonal blocks. 

\begin{figure}[t]
    \centering
    \subfloat[$\boldsymbol{A}$ sampled from the \emph{planted submatrix model}.]{
        \includegraphics[width=0.45\linewidth]{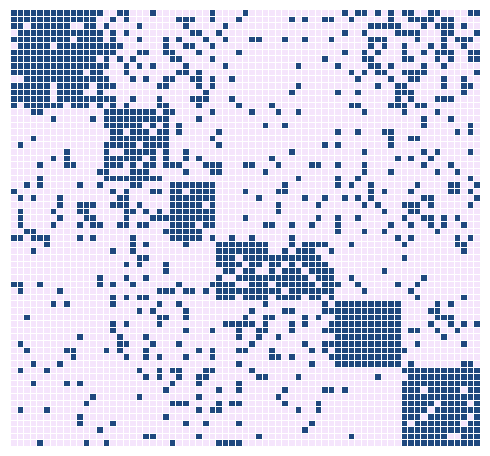}
    \label{fig:generated_bicliques}   }
    \hfill
    \subfloat[$\boldsymbol{B}$ sampled from the \emph{balanced planted submatrix model}.]{
        \includegraphics[width=0.45\linewidth]{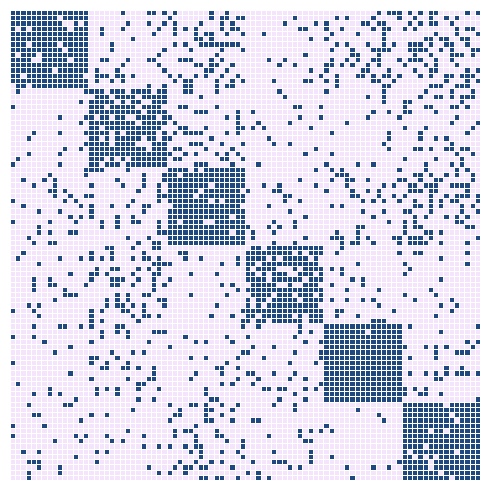}       \label{fig:generated_balancedbicliques}}
    \caption{Examples of matrices $\boldsymbol{A}$ and $\boldsymbol{B}$ sampled from the planted submatrix and balanced planted submatrix models with heterogeneous probabilities $\{p_{rs}\}$.}
    \label{fig:heterogeneous_planted_example}
\end{figure}

We are now ready to state a simplified version of our key contribution, which provides a sufficient condition on the parameters in the \emph{balanced planted submatrix} that ensures recovery of the densest submatrix via the solution of a tractable convex program.

\begin{theorem}[Perfect Recovery in the Balanced Planted Submatrix Model] \label{Theorem recov}
Let $\A \in \mathbb{R}^{M \times N}$ be sampled from the \emph{balanced planted submatrix} model with row partitions $(U_1, \ldots, U_{k})$ and column partitions $(V_1, \ldots, V_{k})$. Let $m := |U_i| = |V_i|$ for all $i=1,2,\dots, k.$ Let $p_{rs}$ denote the probability that $A_{ij} = 1$ for $i \in U_r$, $j \in V_s$.

Assume that $(U_1, V_1)$ is the planted block of interest. Define the worst-case variance proxy:
\[
\tilde{\sigma}^2 := \frac{p^*}{1-p^*} \text{ where } p^*:= \max_{(r,s)\neq (1,1)} p_{rs}.
\]
Then there exists constant $c > 0$ such that 
\begin{itemize} 
    \item the planted block $(U_1, V_1)$ is the unique densest $m \times m$ submatrix of $\A$ with high probability; and
    \item we can identify $(U_1, V_1)$ in polynomial-time using the solution of a convex program with high probability
\end{itemize}
if the following conditions hold
\begin{subequations}
\begin{align}
(\log N)^3 &\le m^2 \\
    p_{11} - p^* &\ge c \cdot \max \left\{
        \sqrt{ \frac{\tilde\sigma^2 N \log N}{m^2}
            },
        \sqrt{\frac{\sigma_{11}^2 \log N}{m}}, 
        \sqrt{\frac{\max\{\tilde\sigma^2, 1\}(\log N)^3}{m}}
    \right\}. \label{eq:balanced-gap}
\end{align}
\end{subequations}
\end{theorem}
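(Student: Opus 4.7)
The plan is to proceed in three stages: (a) relax the densest $m\times m$-submatrix problem to a convex program via nuclear-norm-plus-$\ell_1$ penalization in the spirit of robust PCA, following \cite{ames2015guaranteed} and \cite{bombina2020convex}; (b) establish a combinatorial ``density separation'', namely that the planted indicator $\X^\star := \1_{U_1}\1_{V_1}^T$ is the unique densest $m\times m$ submatrix of $\A$ with high probability; and (c) certify optimality of $\X^\star$ in the convex relaxation by explicit construction of a dual multiplier and verification of the associated KKT conditions.

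For stage (a), the combinatorial rank-one formulation $\max \langle \A, \x\y^T\rangle$ over indicator vectors $\x\in\{0,1\}^M, \y\in\{0,1\}^N$ with $\|\x\|_0=\|\y\|_0=m$ becomes $\min \gamma \|\X\|_\ast + \|\S\|_1$ subject to row- and column-sum and box constraints that force $\X$ to behave like an indicator matrix and $\S$ to absorb sparse corrections to $\A$. For stage (b), I would apply a one-sided Bernstein tail bound to the total number of ones inside each candidate $m\times m$ submatrix and then union bound over the $\binom{M}{m}\binom{N}{m}$ candidates; the three terms on the right-hand side of \eqref{eq:balanced-gap} correspond exactly to the variance, sub-variance, and $L^\infty$-increment contributions of Bernstein, and the $\log N$ factors arise from the logarithm of this union size.

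The heart of the proof is stage (c). I would write the KKT conditions for the convex relaxation: optimality of $\X^\star$ is equivalent to the existence of a dual certificate $\W$ lying in the subdifferential of the nuclear norm at $\X^\star$, together with entrywise constraints inherited from the $\ell_1$-subgradient and complementary slackness for the box constraints. The tangent space decomposition at the rank-one matrix $\X^\star$ pins the on-tangent part $P_T\W = \tfrac{1}{m}\X^\star$, while the off-tangent part $P_{T^\perp}\W$ is free subject to (i) operator norm strictly less than a target threshold, (ii) entrywise bound set by $\gamma$ off the support of $\X^\star$, and (iii) sign and support compatibility on the support. The natural ansatz is to take $\W$ as the rescaled projection of the centered noise $\A - \EX[\A]$ onto $T^\perp$, adjusted by deterministic row/column shifts to match $P_T\W$, and then verify (i)--(iii) by concentration.

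The principal obstacle will be (i) under the heterogeneous block structure: because each block $(U_r, V_s)$ has its own Bernoulli parameter, the centered entries have non-identical variances, and the i.i.d.\ matrix Bernstein analysis of \cite{bombina2020convex} does not apply directly. I would bound each per-entry variance uniformly by $\tilde\sigma_*^2$ and apply the non-commutative Bernstein inequality to the sum of independent centered rank-one random matrices supported on $(U_1\times V_1)^c$, yielding a spectral-norm bound of order $\sqrt{\tilde\sigma_*^2 N\log N} + \log N$; dividing by the Frobenius normalization $m$ of $\X^\star$ reproduces the first and third terms of \eqref{eq:balanced-gap}, while the second term arises from entrywise Bernstein applied to the on-support block with variance $\sigma_{11}^2$. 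The hypothesis $(\log N)^3 \le m^2$ is what forces the lower-order Bernstein correction to be negligible relative to the sub-Gaussian contribution, and it also ensures that the entrywise verification of (ii)--(iii) survives another union bound over the off-support entries of $\W$.
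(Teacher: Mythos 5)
Your overall strategy coincides with the paper's: the same nuclear-norm-plus-$\ell_1$ relaxation~\eqref{eqn3.2}, optimality certified through the KKT system of \cref{thm:KKT}, a dual certificate built from the centered noise, and a matrix concentration bound with the worst-case variance proxy $\tilde\sigma_*^2$ producing the spectral term $\sqrt{\tilde\sigma_*^2 N\log N}/m$. Two of your stages, however, would not go through as written. First, stage (b) is not how the paper proceeds and is not needed as a separate argument: the ``unique densest submatrix'' claim follows directly from uniqueness of the optimizer of the relaxation, since every candidate indicator matrix is feasible for~\eqref{eqn3.2} with nuclear norm identically equal to $m$ and $\ell_1$-type term equal to its number of missing entries. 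A union bound over the $\binom{M}{m}\binom{N}{m}$ candidates would moreover only reproduce the second and third terms of~\eqref{eq:balanced-gap}, not the first.

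Second, and more substantively, your ansatz of taking $\W$ to be the orthogonal projection of $\A-\EX[\A]$ onto $T^\perp$ makes condition (i) easy but leaves (ii)--(iii) essentially unverifiable: the projection smears mass across entire rows and columns, and you lose the entrywise control of $\W$ that is required to exhibit \emph{nonnegative} multipliers $\BX$ and $\BL$ for the constraints $\0\le\X\le\e\e^T$ and $\Y\ge\0$. The paper instead constructs $\W$ entry by entry, using \emph{data-dependent} (not deterministic) corrections of the form $-\lambda\,\nu_j/(m_1-\nu_j)$ in the bordering blocks so that $\W\bv=\0$ and $\W^T\bu=\0$ hold exactly while every entry of $\W$ and $\BX$ remains explicit. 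It is this nonnegativity verification --- not the spectral bound --- that generates the terms $\sqrt{\sigma_{11}^2\log N/m}$ and $\log N/m$ in~\eqref{eq:balanced-gap} and the admissible window for $\gamma$. The price of the data-dependent centering is a separate perturbation bound of order $(\log N)^{3/2}$ on the difference between the empirically centered and population-centered certificates (the paper's Lemma~\ref{lem4.4}), and that bound, rather than a ``lower-order Bernstein correction,'' is the actual reason the hypothesis $(\log N)^3\le m^2$ appears. Your plan needs to resolve this tension between spectral control and entrywise control explicitly; as stated it accounts for neither the exact annihilation $\W\bv=\0$ nor the nonnegativity of the box-constraint multipliers.
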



Condition~\eqref{eq:balanced-gap} serves as a signal-to-noise ratio (SNR) balancing the signal, measured by the gap between $p_{11}$ and $p^*$, relative to noise, measured by the variances $\tilde\sigma^2_*$ and $\sigma_{11}^2$, normalized by the relative gap in the size of the planted blocks, $m^2$, and the size of the matrix given by $N$. If this SNR is sufficiently large, then we can expect to find the densest $m\times m$-submatrix in polynomial-time.

Here, we denote $(U_1, V_1)$ as the indices of the block of interest for notational simplicity and consistency through the rest of the manuscript. We assume that the planted block is indexed by $U_1, V_1$ without loss of generality, i.e., the result holds if $(U_i, V_j)$ indexes a planted block satisfying~\eqref{eq:balanced-gap} (with $p^*$ redefined as the maximum probability over all blocks except the $(i,j)$ block), for any $i,j \in \{1,2,\dots, k\}$. In this case, $(U_i, V_j)$ indexes the densest submatrix of $\A$ with high probability.

This result generalizes to the \emph{unbalanced} planted submatrix model, as well as adversarially constructed matrices. Note that Theorem~\ref{Theorem recov} omits an explicit description of the family of convex relaxations that yield the densest submatrix w.h.p.~under the hypothesis of Theorem~\ref{Theorem recov}. We will derive the relevant convex relaxation and provide these generalized sufficient conditions in the following section, including a range of regularization parameters for which the relaxation is exact. Specifically, we provide a generalization of Theorem~\ref{Theorem recov} in Theorem~\ref{thm:suff-cond-random}. A corresponding result for adversarially generated noise is found in Theorem~\ref{thm:suff-cond-adv}.

\section{The Densest Submatrix Problem}
\label{sec:DSM}

\subsection{Formulation and Relaxation}

We approach the challenge of the intractability of the general densest $m \times n$-submatrix problem by relaxing the problem as a convex program. While we do not claim that the relaxation yields globally optimal solutions for all input matrices, we  demonstrate that our convex formulation is capable of exact recovery for structured matrices generated under the planted submatrix model. In particular, we show that the proposed model recovers the planted $m \times n$ block even when it is obscured by noise in the form of spurious nonzero entries or missing signal entries elsewhere in the matrix.

We define $[M]:=\{1, 2,...,M\}$ and $[N]:=\{1, 2,...,N\}$ for every positive integer $M$ and $N$ respectively. Let $\bar{U}_1 \subseteq [M]$ and $\bar{V}_1 \subseteq [N]$ denote the row and column indices of a submatrix of $\A$ such that $|\bar{U}_1| = m$ and $|\bar{V}_1|=n$. Furthermore, we define vectors $\bar{\u} \in \{0,1\}^{M}$ and $\bar{\v} \in \{0,1\}^{N}$ as the indicator vectors of $\bar{U}_1$ and $\bar{V}_1$ respectively. The support of this submatrix is given by the nonzero entries of the  matrix $\bar{\X} = \bar{\u} \bar{\v}^\top \in \{0,1\}^{M \times N}$. By construction, $\bar{\X} = \bar{\u}\bar{\v}^\top$ is a rank-one binary matrix supported on $\bar{U}_{1}\times\bar{V}_{1}$, such that $\bar{X}_{ij} = 1$ if $(i,j) \in \bar{U}_{1} \times \bar{V}_{1}$ and $\bar{X}_{ij} = 0$ otherwise.

Suppose that $\Omega$ encodes the set of zero entries in a given matrix $\A \in \mathbb{R}^{M\times N}$. Without loss of generality, we may assume that $\A$'s entries are binary. If not, we can substitute $\A$ with a binary matrix that features the same sparsity pattern, preserving the index set of the densest $m \times n$-submatrix. Our objective is to find a rank-one matrix $\X$ with $m n$ nonzero entries that minimizes the number of disagreements with $\A$ on $\Omega$.

The observed binary matrix $\A \in \{0,1\}^{M \times N}$ may differ from $\bar{\X}$ due to noise or missing entries. 
We define the disagreement matrix $\bar{\Y} = P_\Omega(\bar{\X})$ as the projection of $\bar{\X}$ onto the set $\Omega$, i.e., a binary matrix supported only on indices where $\bar{\X}$ and $\A$ disagree due to $A_{ij} = 0$ while $\bar{X}_{ij} = 1$.
This leads to the representation
\[
\tilde{\A}_{\bar{U}, \bar{V}} = \bar{\X} - P_\Omega(\bar{\X}),
\]
where $\tilde{\A}_{\bar{U}, \bar{V}}$ can be interpreted as a noise-corrected approximation of the true submatrix in $\A$. The pair $(\bar{\X}, \bar{\Y})$ constitutes the matrix representation of the recovered submatrix and its disagreement pattern.

The density of the submatrix indexed by $(\bar{U}_{1}, \bar{V}_{1})$ is given by
\[
d(\bar{\X}) = \frac{1}{m n} \left( m n- \| P_\Omega(\bar{\X}) \|_0 \right),
\]
where $\| P_\Omega(\bar{\X}) \|_0$ counts the number of  locations in $\bar{\X}$ that lie in $\Omega$, where $\A$ reports a zero.
Since $P_\Omega(\bar{\X})$ is binary, its support size can be equivalently written as:
\[
\| P_\Omega(\bar{\X}) \|_0 = \sum_{(i,j) \in \Omega} [\bar{\X}]_{ij}.
\]
This allows us to cast the planted submatrix recovery problem as a structured rank-one optimization task with explicit modeling of disagreement against the zero-pattern of the data:
\begin{equation} \label{eqn3.1}
\begin{array}{cl}
    \displaystyle\min_{\X,\Y \in\{0, 1\}^{M\times N} } & \tr(\Y\e\e^{T})  \\ 
    \suchthat & \tr(\X\e\e^{T}) = m n, ~ P_{\Omega}(\X - \Y) = \0,~ \rank (\X) = 1.
\end{array}
\end{equation}
 By penalizing the number of mismatches through the matrix $\Y$, the problem~\eqref{eqn3.1} seeks a rank-one submatrix that minimizes disagreements between $\A$ and $\X$ on $\Omega$ while preserving the target support size $m_1 n_1$.

We move the rank constraint to the objective and relax using the nuclear norm \cite{recht2010guaranteed}, and then relax binary constraints using appropriate box constraints. This yields the optimization problem
\begin{equation} \label{eqn3.2}
\begin{array}{cl}
     \min &\lVert \X \rVert_{*} +  \gamma \tr(\Y\e\e^{T})  \\
     \suchthat &\tr(\X\e\e^{T}) = mn, ~ P_{\Omega}(\X - \Y) = \0,~  \0 \le \X \le \e\e^{T}, ~ \0 \le \Y.
     \end{array}
\end{equation}
Here, $\gamma \ge 0$ denotes a regularization parameter selected to balance between the two objectives. and $\|\X\|_*$ denotes the nuclear norm of matrix $\X$, which is equal to the sum of the singular values of $\X$. 
The relaxation~\eqref{eqn3.2} is convex since it is the minimization of the sum of a norm and linear function subject to linear constraints. Moreover,~\eqref{eqn3.2} satisfies Slater's condition; indeed, $\X = \Y = m_1 n_1/(M N) \e\e^T$ is strictly feasible for~\eqref{eqn3.2}. Moreover, \eqref{eqn3.2} can be rewritten as a semidefinite program using the argument presented in~\cite{recht2010guaranteed}. Thus,~\eqref{eqn3.2} can be solved using standard algorithms for semidefinite programming such as interior point methods; we consider a first-order method for solution of~\eqref{eqn3.2} in Section~\ref{sec:ADMM}.

As previously mentioned, it is reasonable to anticipate retrieving the solution of \eqref{eqn3.1} from \eqref{eqn3.2} when matrix $\A$ consists of a dense $m_1\times n_1$ submatrix hidden among perhaps many other dense submatrices under certain assumptions about the size and density of these submatrices.
In particular, this program succeeds in recovering the planted block exactly when embedded in a noisy binary matrix under the conditions of Theorem~\ref{Theorem recov}.

\subsection{A Sufficient Condition for Perfect Recovery for Random Matrices}
\label{sec:suff-cond-random}

\cref{Theorem recov} provided a sufficient condition for recovery of planted submatrices in a random graph sampled from the \emph{balanced planted submatrix model}. We now state the generalization of this sufficient condition for the planted submatrix model with (possibly) unbalanced submatrix sizes.

\begin{theorem}\label{thm:suff-cond-random}
Let $\A \in \mathbb{R}^{M \times N}$ be sampled from the planted submatrix model with row partitions $(U_1, \ldots, U_{k})$ and column partitions $(V_1, \ldots, V_{k})$. Let $m_i = |U_i|$ and $n_i = |V_i|$, and let $p_{rs}$ denote the probability that $A_{ij} = 1$ for $i \in U_r$, $j \in V_s$.

Assume that $(U_1, V_1)$ is the planted block of interest and let $\u_1$, $\v_1$ denote the characteristic vectors of $U_1$ and $V_1$.
Define $N^* = \max\{M, N\}$, $n_{j*} = \min\{m_j, n_j\}$, and the worst-case variance proxy:
\vspace{-4mm}
\begin{align*}
\tilde{\sigma}^2 := \max_{(r,s)\neq (1,1)} \left\{ \frac{p_{rs}}{1-p_{rs}}  \right\}.
\end{align*}
Then there exist constants $c_1, c_2 > 0$ such that the planted block $(U_1, V_1)$ is the unique densest $m_1 \times n_1$ submatrix of $\A$
and $\X^* = \u_1 \v_1^T$ is the unique optimal solution of the convex relaxation~\eqref{eqn3.2} with high probability if the following conditions hold for $(r,s) \neq (1,1)$:
\begin{subequations}
\begin{align}
    m_1 n_1 &\le m_r n_s, \label{eq:rec-size}\\
    (\log N)^3 &\le \min\{m_r^2,n_s^2\}  \label{eq:rec-size2}\\
     p_{11} - p^* & \ge   \max \left\{
        \sqrt{\frac{\tilde\sigma^2 N^* \log N^*}{m_1 n_1}}, 
        \sqrt{\frac{\sigma_{11}^2 \log N^*}{n_{1*}}}, 
        \sqrt{\frac{\max\{\tilde\sigma^2, 1\} (\log N^*)^3}{n_{1*}}} 
    \right\},         \label{eq:rec-gap}
\end{align}
\end{subequations}
and the regularization parameter $\gamma$ in~\eqref{eqn3.2} belongs to the interval
\begin{equation}\label{eq:gamma-range}
\gamma \in \left(
\frac{2}{(p_{11} - p^*) \sqrt{m_1 n_1}},
\frac{c_2}{(p_{11} - p^*) \sqrt{m_1 n_1}}
\right), 
\end{equation}
where $p^* :=\max_{(r,s)\neq (1,1)} p_{rs}$.
\end{theorem}

This result shows that recovery depends critically on a separation condition between the planted density $p_{11}$ and the background noise levels $p_{rs}$. The condition balances statistical fluctuations, structural asymmetry, and problem size. When it holds, solving the convex program in~\eqref{eqn3.2} results in a solution $\X$ whose support exactly matches the planted submatrix. 
It is easy to see that \cref{thm:suff-cond-random} specializes to \cref{Theorem recov} when $\A$ is sampled from the balanced planted submatrix model.

\begin{remark}\label{rmk:area-constraint}
    The assumption that $m_1 n_1 \le m_{r} n_s$ for all $(r,s) \neq (1,1)$ is made to simplify both the statement of the sufficient conditions for perfect recovery and their proof. It is reasonable to expect perfect recovery when $(U_1, V_1)$ is sufficiently more dense in expectation than any other $m_1 \times n_1$ block of $\A$ and $m_1 \times n_1$ is sufficiently large. Assumptions~\eqref{eq:rec-size} and~\eqref{eq:rec-gap} characterize when this is satisfied in Theorem~\ref{thm:suff-cond-random}.
    An analogous result holds when we replace Assumption~\eqref{eq:rec-size} with the assumption that $p_{11}$ exceeds the average expected density of any union of blocks indexing a submatrix with at least $m_1n_1$ entries of $\A$, albeit with far more complicated formal statement and notation than that given in Theorem~\ref{thm:suff-cond-random}. We give a deterministic characterization of this more general sufficient condition in the next section.
\end{remark}

\subsection{A Sufficient Condition for Perfect Recovery for Adversarially Generated Matrices}
\label{sec:suff-cond-det}
The sufficient conditions for perfect recovery given in Section~\ref{sec:suff-cond-random} may be overly conservative. As noted in Remark~\ref{rmk:area-constraint}, the assumption that the planted submatrix of interest is smaller than all other blocks in the generative model is kept largely to ensure that this is the densest $m_1\times n_1$-submatrix with high probability if the hypothesis of \cref{thm:suff-cond-random} is satisfied.
This simplifies the statement of the sufficient conditions, as well as their proof, but is not necessary, in general, to ensure perfect recovery. In this section, we provide a sufficient condition for recovery that holds for \emph{deterministically} generated graphs. This condition deterministically encodes the relationship between planted submatrices that ensures perfect recovery in the random case, i.e., that $(U_1,V_1)$ is sufficiently more dense than all other $m_1\times n_1$ subgraphs.

\subsubsection{The Adversarial Planted Submatrix Model}
\label{sec:adv-psm}

We start by describing an adversarial analogue of the planted submatrix model (\cref{def:psm}):
\begin{itemize}
\item 
Partition $[M]$ and $[N]$ into $k$ blocks given by $(U_1, U_2, \dots, U_k)$ and $(V_1, V_2, \dots, V_k)$, respectively.
Let $m_i := |U_i|$ and $n_i := |V_i|$ for each $i=1,2,\dots, k$.
We'll associate these with a partition of the rows and columns of binary matrix $\A \in \mathbb{R}^{M\times N}$.
\item 
Pick one block, indexed by $(U_1, V_1)$ without loss of generality,  and set $A_{ij} =1$ for all $i \in U_1, j \in V_1$.
\item
An adversary is then allowed to do the following:
\begin{itemize}
    \item Delete, i.e., set equal to $0$, up to $(1-\td) m_1$ entries in the $j$th column of $\A[U_1, V_1]$ for each $j \in V_1$. Note that this implies that at least $\td m_1$ entries of each column of $\A[U_1, V_1]$ remain equal to $1$.    
    \item Delete up to $(1 - \td) n_1$ entries in the $i$th row of $\A[U_1, V_1]$ for each $i \in U_1$.
    \item In total, the adversary deletes $\bar{r}_{11}$ entries in $\A[U_1, V_1]$; note that $r_{11} := m_1 n_1 - \bar r_{11}$ entries of $\A[U_1, V_1]$ are equal to $1$.
    \item Add, i.e., set equal to $1$, up to $r_{ii}$ nonzero entries in each diagonal block $\A[U_i, V_i]$, $i=2,3,\dots, k$.
    \item Add up to $\delta m_1$ nonzero entries in the $j$th column of $\A[U_1, V-V_1]$ for each $j \in V-V_1$, with at most $r_1$ nonzero entries in $\A[U_1, V-V_1]$.
    \item Add up to $\delta n_1$ nonzero entries in the $i$th row of $\A[U-U_1, V_1]$ for each $i \in U-U_1$, with at most $r_2$ nonzero entries in $\A[U-U_1, V_1]$.
    \item Add up to $r_3$ nonzero entries among $\{(i,j): i \in U_q, j \in V_s, \text{such that } q, s \ge 2, q \neq s\}$.
\end{itemize}
\end{itemize}

This construction models a two-person game where an adversary attempts to hide a planted all-ones matrix by a combination of creation of other dense blocks and setting entries within the planted matrix equal to $0$.

\subsubsection{Perfect Recovery Under the Adversarial Planted Submatrix Model}
\label{sec:adv-psm-rec}

It is reasonable to assume that we should have perfect recovery of the planted submatrix under the adversarial construction described in \cref{sec:adv-psm} if the adversary is allowed to introduce a limited amount of noise in the form of entry additions and deletions.
The following theorem provides a characterization of the limits on adversarial noise that ensures that the planted submatrix is recoverable as the unique solution of~\eqref{eqn3.2}.

\begin{theorem}\label{thm:suff-cond-adv}
Suppose that matrix $\A$ is constructed as described in \cref{sec:adv-psm} with planted submatrix indexed by $(U_1, V_1)$.
Let $\u_1$ and $\v_1$ denote the characteristic vectors of $U_1$ and $V_1$ respectively.
Then there exists constant $c$, depending only on $\delta, \td$ such that if 
\begin{align}
    2 \td - \delta  &> 1 \label{eq:adv-gap-cond} \\ 
    \max\{ r_1, r_2, r_3, r_{22}, r_{33}, \dots, r_{kk}, \bar{r}_{11} \} &\le c m_1 n_1 \label{eq:adv-size-cond}
\end{align}
then $(U_1, V_1)$ indexes the densest $m_1 \times n_1$-submatrix of $\A$ and $(\u_1 \v_1^T, P_\Omega(\u_1 \v_1^T))$ is the unique optimal solution of~\eqref{eqn3.2} with 
\begin{equation}
    \label{eq:adv-gamma-def}
    \gamma = \frac{1}{2\td - \delta - 1} \frac{1}{\sqrt{m_1 n_1}}.
\end{equation}
\end{theorem}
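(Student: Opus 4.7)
The plan is to certify that the candidate solution $(\X^*, \Y^*) := (\u_1\v_1^T, P_\Omega(\u_1\v_1^T))$ is the unique optimum of the convex program~\eqref{eqn3.2} by explicitly constructing dual multipliers that satisfy the KKT conditions. This follows the template of Ames--Vavasis~\cite{ames2011nuclear} and Bombina--Ames~\cite{bombina2020convex}, but the certificate must be adapted to the heterogeneous block structure produced by the adversarial construction of \cref{sec:adv-psm}. Using the subdifferential characterization $\partial\|\X^*\|_* = \{\frac{1}{\sqrt{m_1 n_1}}\u_1\v_1^T + \W : \W\v_1 = \0,\ \W^T\u_1 = \0,\ \|\W\|_2 \le 1\}$ together with scalar multiplier $\lambda$ for the trace constraint, matrix multiplier $\BL$ supported on $\Omega$ for the linking constraint, and nonnegative multipliers $\bm, \bn, \bO$ for the box and nonnegativity constraints, stationarity in $\Y$ forces $\BL_{ij} = \gamma$ on $\Omega \cap (U_1\times V_1)$ and $\BL_{ij} \in [0,\gamma]$ elsewhere on $\Omega$. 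Stationarity in $\X$ reduces to sign inequalities on the entries $W_{ij}$ that depend on whether $(i,j)$ lies in the planted block and whether $A_{ij}$ is $0$ or $1$, subject to the two kernel conditions and the operator-norm bound $\|\W\|_2 \le 1$.

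The certificate is then built block-by-block: on each block $U_q \times V_s$ with $(q,s) \neq (1,1)$, I would saturate the stationarity inequalities by taking $W_{ij}$ block-dependent-constant, with one value where $A_{ij}=1$ and another where $A_{ij}=0$. On the planted block $U_1 \times V_1$, $\W$ is instead \emph{forced} by the kernel conditions $\W\v_1 = \0$ and $\W^T\u_1 = \0$, which compel a low-rank correction whose coefficients depend on the row- and column-wise counts of retained planted entries; by the adversarial model each such row has at least $\td n_1$ ones and each column at least $\td m_1$ ones. The choice of $\gamma$ in~\eqref{eq:adv-gamma-def} is exactly the value at which, after this correction, the column-wise balance between the at-least-$\td m_1$ kept planted entries and the at-most-$\delta m_1$ adversarial entries in the adjacent row band leaves the slack multipliers $\bm, \bn, \bO$ nonnegative; this is precisely where hypothesis~\eqref{eq:adv-gap-cond}, $2\td-\delta>1$, is needed.

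The main obstacle will be verifying $\|\W\|_2 < 1$. I would split $\W = \W_0 + \W_1 + \W_2$, with $\W_0$ the block-constant skeleton, $\W_1$ the low-rank correction on $U_1 \times V_1$ enforcing the kernel constraints, and $\W_2$ a sparse residual carrying the adversarial additions and deletions. The rank-bounded piece $\W_1$ has operator norm controlled by the row/column slack and the scaling $1/\sqrt{m_1 n_1}$ inherited from $\gamma$, both small thanks to~\eqref{eq:adv-size-cond}. For $\W_0$, I would bound each block-constant piece $w_{qs}\1_{U_q}\1_{V_s}^T$ by $|w_{qs}|\sqrt{m_q n_s}$, with $|w_{qs}| = O(r_{qs}/(m_q n_s))$. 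The residual $\W_2$ is bounded by its Frobenius norm, again controlled by the total adversarial count. Condition~\eqref{eq:adv-size-cond} caps each of $r_1,r_2,r_3,r_{22},\dots,r_{KK},\bar r_{11}$ at $c\,m_1 n_1$, and the constant $c$ in the theorem is taken small enough that the sum of these three operator-norm bounds is strictly below $1$.

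Finally, uniqueness follows from the strictness of these inequalities: $\|\W\|_2 < 1$ is strict, and the slack multipliers $\bm, \bn, \bO$ can be taken strictly positive on the regions where they are not forced to vanish. A standard convex-analysis argument as in~\cite{ames2015guaranteed} then upgrades the certificate to show that every feasible perturbation of $(\X^*,\Y^*)$ strictly increases the objective of~\eqref{eqn3.2}. This simultaneously yields the first conclusion of the theorem: if some other $m_1 \times n_1$ submatrix indexed by $(S,T)$ had at least as many ones as $U_1 \times V_1$, then $(\1_S\1_T^T, P_\Omega(\1_S\1_T^T))$ would be a feasible point of~\eqref{eqn3.2} with objective no larger than that of $(\X^*,\Y^*)$, contradicting strict optimality.
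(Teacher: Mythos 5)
Your proposal follows essentially the same route as the paper: a KKT dual certificate built case-by-case (block-constant/sparse values of $\W$ off the planted block, a low-rank correction on $U_1\times V_1$ forced by the kernel conditions $\W\bv=\0$, $\W^T\bu=\0$), nonnegativity of the slack multipliers yielding exactly the condition $2\td-\delta>1$ and the choice of $\gamma$ in~\eqref{eq:adv-gamma-def}, and a bound $\|\W\|<1$ obtained by decomposing $\W$ and controlling the sparse pieces by Frobenius norms and the planted-block piece by rank-one operator norms, all capped via~\eqref{eq:adv-size-cond}. Your three-way split $\W_0+\W_1+\W_2$ is organized slightly differently from the paper's $\W_1+\W_2+\W_3$ (plus $\tilde\W_1+\tilde\W_2$ on the planted block), but the estimates and the logic, including the final deduction that unique optimality implies $(U_1,V_1)$ is the densest $m_1\times n_1$ submatrix, are the same.
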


Essentially \cref{thm:suff-cond-adv} indicates that we have perfect recovery of $\A[U_1, V_1]$, unless the adversary is able to create a denser $m_1\times n_1$-submatrix through a combination of 
\begin{itemize} 
\item addition of $\Omega(m_1n_1)$ nonzero entries outside $\A[U_1, V_1]$;
\item deletion of $\Omega(m_1n_1)$ nonzero entries within $\A[U_1, V_1]$; or 
\item adding $\Omega(m_1)$ entries within a column of $\A[U_1, V - V_1]$ and/or $\Omega(n_1)$ entries within a row of $\A[U-U_1, V_1]$.
\end{itemize}

\subsection{Relationship with Existing Recovery Guarantees}
\label{sec:rec-examples}

The sufficient conditions for recovery given by Theorem~\ref{thm:suff-cond-random} and Theorem~\ref{thm:suff-cond-adv} can be shown to be extensions of several existing sufficient conditions for perfect recovery of dense subgraphs, planted cliques, and dense submatrices.
Indeed, consider the case when $k=2$ and set 
$$
    q := p_{11}, \hspace{0.25in} p = p_{12} = p_{21} = p_{22}.
$$
In this case, the sufficient condition given in Theorem~\ref{thm:suff-cond-random} immediately specializes to that given in \cite[Theorem 3.1]{bombina2020convex}. Moreover, placing further assumptions on $q$ and $p$, e.g., $q=1$, yields the recovery guarantees for the planted clique and maximum edge biclique problems proposed in~\cite{ames2011nuclear}.
Similarly, the sufficient conditions for perfect recovery under the adversarially constructed setting given in Theorem~\ref{thm:suff-cond-adv} directly specialize the sufficient conditions for perfect recovery of planted cliques and bicliques given in \cite[Section 4.1 and Section 5.1]{ames2011nuclear}.

Our results are closely related to similar recovery guarantees for biclustering under generalizations of the stochastic block model. 
Specifically, several recent results \cite{ames2014guaranteed,sudoso2025semidefinite} establish sufficient conditions under which we can find \emph{all} planted dense submatrices, corresponding to biclusters in the bipartite graph with adjacency matrix $\A$, for $\A$ sampled from the planted submatrix model;  the planted submatrix model corresponds to a special case of the stochastic block model (SBM) here.
In this context, Theorem~\ref{thm:suff-cond-random} and Theorem~\ref{thm:suff-cond-adv} give sufficient conditions for perfect recovery of a \emph{single} planted dense submatrix of $\A$ sampled from the planted submatrix model. This translates to a sufficient condition for perfect recovery of the \emph{single densest bicluster} of a given size in the biclustering setting. 
Comparing these results suggests that there are instances sampled from the planted submatrix model where the planted dense $m\times n$-submatrix corresponds to one of the biclusters in the corresponding SBM. 
In this case, biclustering should reveal the densest $m\times n$-submatrix, along with all other biclusters. In this sense, we can view the densest submatrix problem as that of identifying a single hidden dense bicluster or submatrix, among potentially many biclusters with heterogeneous edge probabilities and outliers.

The sufficient condition for solution of the $m\times n$-densest submatrix problem includes a broader range of matrices than the sufficient conditions for perfect recovery of biclusters given in~\cite{ames2014guaranteed,sudoso2025semidefinite}  in the following ways:
\begin{itemize}
    \item 
    The sufficient conditions given in Theorem~\ref{thm:suff-cond-random} hold for a much larger number of outliers than those for biclusters. Indeed, sufficient conditions for perfect biclustering under the SBM typically restrict the number of outlier nodes to $\O(\min\{\min\{m_i,n_i\}\})$, where $(m_i,n_i)$ is the size of the $i$th planted bicluster. The sufficient conditions given in Theorem~\ref{thm:suff-cond-random} ensure recovery of the densest $m\times n$-submatrix for instances $\A \in \{0,1\}^{M\times N}$ of the planted submatrix model with up to $\O(\min\{M,N\})$ outlier nodes. 
    \item 
    Moreover, our construction of the planted submatrix model potentially allows multiple relatively dense blocks in each row or column of blocks, while the stochastic block model assumes one especially dense block per row and column (typically indexed as the diagonal block).
    \item 
    Solution of~\eqref{eqn3.2} requires selection of the submatrix sizes $(m,n)$, while most biclustering methods do not need to be provided block sizes. However, many biclustering methods  require the number of biclusters $k$ as a parameter, which is not needed by~\eqref{eqn3.2}.
\end{itemize}
This implies that we are able to recover the densest $m\times n$-submatrix using the convex relaxation~\eqref{eqn3.2} in a wider range of instances $\A$ than if we identify all biclusters, including the densest $m\times n$-submatrix, in the bipartite graph with adjacency matrix $\A$.

\subsection{Identifying Multiple Planted Submatrices}
\label{sec:multiple-cliques-examples}

When a matrix contains multiple densest $m \times n$-submatrices, the sufficient conditions for perfect recovery given by Theorem~\ref{thm:suff-cond-random} and Theorem~\ref{thm:suff-cond-adv} do not hold.
However, we observe that we may still identify the densest submatrices from the solution of~\eqref{eqn3.2} in practice in Section~\ref{sec:real-data}.
In this case, we observe that any convex combination of the matrix representations of the densest submatrices is an optimal solution of~\eqref{eqn3.2}.
It seems reasonable to expect that there are sufficient conditions similar to those given in Sections~\ref{sec:suff-cond-random} and~\ref{sec:suff-cond-det} under which we should expect to identify (potentially) multiple densest submatrices with high probability. 

We can formalize this observation as the following conjecture.

\begin{conjecture}\label{conj:multiple-bicliques-conjecture}
    Let $\A \in \mathbb{R}^{M\times N}$ be sampled from the planted submatrix model with row partitions $(U_1, \dots, U_k)$ and column partitions $(V_1, \dots, V_k)$ such that 
    \begin{itemize}
    \item 
        $(U_1, V_1)$, $(U_2, V_2)$, \dots $(U_\ell, V_\ell)$ all index $m\times n$-submatrices of $\A$ with maximum density;
    \item 
        $p_{ii} \ge q$ for all $i \in \{1,2,\dots,\ell\}$ for some $q \in (0,1]$;        
    \end{itemize}
    for all $r \in \{1,2,\dots, k-1\}$.
    Let $\X_r = \u_r \v_r^T$ be the rank-one matrix representation of the diagonal block $(U_r, V_r)$ for each $r=1,2,\dots, \ell$.    
    Then there exist constants $c_1, c_2, c_3 > 0$ such if 
    \[ 
        (q - \tilde p) \min \{m,n\} \ge c_1 \max
        \left\{
            \sqrt{\tilde \sigma^2 N^* \log N^*}, 
            \sqrt{q(1-q) m^* \log N^*}, 
            (\log N^*)^{3/2}
        \right\},
    \]
    where $m^* = \max\{m,n\}$, $N^*=\max\{M,N\}$, 
    \begin{align*}
        \tilde \sigma^2 &:= \max 
            \left\{ \frac{p_{rs}}{1-p_{rs}}: (r,s) \notin \{(1,1), (2,2), \dots, (\ell, \ell) 
            \right\}, \text{ and }
    \\
        \tilde p &:= \max 
            \Big\{ p_{rs}: (r,s) \notin \{(1,1), (2,2), \dots, (\ell, \ell) 
            \Big\},
    \end{align*}
    then any convex combination of $\X_i = \u_i \v_i^T$,
    $$
        \sum_{i=1}^\ell \lambda_i \X_i = \sum_{i=1}^\ell \lambda_i \u_i \v_i^T,
    $$
    where $0\le \lambda_i \le 1$ and $\sum \lambda_i = 1$,
    is optimal for~\eqref{eqn3.2} with high probability for
    regularization parameter $\gamma$ satisfying 
    \[    
     \frac{c_2}{(q -\tilde p) \sqrt{mn}} < \gamma 
        <  \frac{c_3}{(q -\tilde p) \sqrt{mn}}.
    \]
\end{conjecture}

\section{Derivation of the Recovery Guarantees in the Random Case}
\label{sec:random-proof}

The recovery guarantees established in this section are grounded in a rigorous duality-based analysis of the convex relaxation formulated in \eqref{eqn3.2}. In particular, we seek to characterize the conditions under which the solutions \( \boldsymbol{\bar{X}} = \bu \bv^{T} \) and \( \bar{\Y} = P_{\Omega}(\bar{\X}) \) not only satisfy feasibility but also exhibit optimality in the context of recovering the planted densest \( (m_1, n_1) \)-subgraph. This is achieved by constructing an explicit dual certificate that satisfies the Karush-Kuhn-Tucker (KKT) conditions for~\eqref{eqn3.2} given in the following theorem.


\subsection{Optimality Conditions} \label{sec: Optimality Conditions}

\begin{theorem}\label{thm:KKT}
The solutions \( \boldsymbol{\bar{X}} = \bu \bv^{T} \) and  \( \bar{\Y} = P_{\Omega}(\bar{\X}) \) are the unique optimal solutions of \eqref{eqn3.2}, where \( \bu \) and \( \bv \) represent the characteristic vectors of \( U_1 \) and \( V_1 \), respectively, if and only if there exist dual multipliers \( \lambda \geq 0 \), \( \boldsymbol{\Lambda} \in \mathbb{R}_{+}^{M \times N} \), \( \BX \in \mathbb{R}_{+}^{M \times N} \), and \( \W \in \mathbb{R}^{M \times N} \) satisfying the following conditions:
\begin{subequations}\label{eq: KKT}
\begin{align}
    \frac{\boldsymbol{\bar {u}} \boldsymbol{\bar v}^T}  {\sqrt{m_{1}n_{1}}} + \W - \lambda \e\e^T 
            + \gamma \e\e^T - \BX + \BL     &= \0 \label{8a}\\ 
    \Xi_{ij} &= \gamma \;\; \text{for all } (i,j) \notin \Omega\label{8e} \\
    \tr (\BL^{T}(\bX - \e\e^T)) &= 0 \label{8b}\\ 
    \tr (\BX^{T}\bY) &=0 \label{8c}\\ 
    \W^{T}\boldsymbol{\bar u} = \0, \quad \W\boldsymbol{\bar v} = \0, \quad \| \W \| &< 1. \label{8d}
\end{align} 
\end{subequations}
\end{theorem}

Here, $\lambda$, $\BX$, $\BL$ are Lagrange multipliers corresponding to the constraints $\tr(\X\e\e^T) = mn$, $\Y\ge 0$, and $\X \le 1$, respectively, in~\eqref{eqn3.2}; the remaining constraints in~\eqref{eqn3.2} and the corresponding Lagrange multipliers can be eliminated when formulating and applying the KKT conditions at $(\bu, \bv)$. Conditions~\eqref{8a} and~\eqref{8e} are the stationary conditions  with respect to $\X$ and $\Y$.
We note that the subdifferential of the nuclear norm at $\bu \bv^T$ is given by 
$$
    \partial \|\bu \bv^T \|_* = \left\{ 
        \frac{\boldsymbol{\bar {u}} \boldsymbol{\bar v}^T}  {\sqrt{m_{1}n_{1}}} + \W : 
        \W^{T}\boldsymbol{\bar u} = \0, \quad \W\boldsymbol{\bar v} = \0, \quad \| \W \| \le 1
        \right\},
$$
which corresponds to the first term in the stationarity condition~\eqref{8a} and the inequalities~\eqref{8d}.
We next consider a particular choice of these Lagrange multipliers and subgradient, and then determine conditions ensuring that the system of equations~\eqref{eq: KKT} is satisfied.

\subsection{A Proposed Choice of Lagrange Multipliers} \label{sec: multipliers}

Theorem \ref{thm:KKT} presents a specialization of the Karush-Kuhn-Tucker (KKT) optimality conditions for the convex program~\eqref{eqn3.2} (see Section 5.5.3 of \cite{boyd2004convex}). Our derivation of sufficient conditions for perfect recovery relies on solving the nonlinear system~\eqref{eq: KKT}, which ensures both optimality and uniqueness of the solution given by Theorem~\ref{thm:KKT}. This solution corresponds to the planted submatrix with rows and columns indexed by \(U_1\) and \(V_1\), respectively. Let \(\bu\) and \(\bv\) denote the characteristic vectors of \(\bar U_1\) and \(\bar V_1\).

Under the assumptions of Theorem~\ref{thm:suff-cond-random}, we show that the solutions \( \bar{\X} = \bu \bv^{T} \) and \( \bar{\Y} = P_{\Omega}(\bar{\X}) \) are the unique optimal solutions of \eqref{eqn3.2}, and that the submatrix given by the index sets \( (U_1, V_1) \) is the densest submatrix. Moreover, we demonstrate that \( \bar{\X} = \bu \bv^{T} \) and \( \bar{\Y} = P_{\Omega}(\bar{\X}) \) satisfy the sufficient conditions for optimality derived from the KKT framework.

The analysis is grounded in tools from convex optimization, spectral graph theory, and probabilistic analysis/sharp nonasymptotic bounds on the norm of random matrices with independent entries \cite{bandeira2016sharp, boucheron2013concentration}. By examining the primal-dual optimality conditions, we provide a concrete method to verify when a rank-one solution of the form \( \X = \u\v^{T} \), where \( \u, \v \) corresponds to the characteristic vectors of the planted submatrix, satisfies the optimality criteria. The core idea is to carefully construct the dual variables to satisfy the complementary slackness and stationarity conditions of the KKT system while ensuring the feasibility of both primal and dual solutions.

This approach allows us to establish precise recovery guarantees for the combinatorial problem of submatrix localization under partial observation. By bridging the discrete structural properties of the graph and continuous convex relaxations, we provide a robust theoretical foundation for the proposed algorithmic framework under the probabilistic model.


\subsubsection{Choice of $\W$ and $\BX$}
\label{sec: W cases}

We can construct a solution of the KKT conditions~\eqref{eq: KKT} by constructing the matrices $\W$ and $\BX$ according to the  following cases.

\begin{itemize}
    \item 
        \textbf{Case 1:} If $(i,j) \in {\bar U}_{1} \times {\bar V}_{1} - \Omega$, then~\eqref{8a} indicates that
        \begin{align*}
            W_{ij} & = \lambda - \gamma - \frac{1}{\sqrt{m_{1}n_{1}}} + \Xi_{ij} -  \Lambda_{ij} \\
                 &= \lambda - \frac{1}{\sqrt{m_{1}n_{1}}}  -  \Lambda_{ij} \\
                 &= \Bar{\lambda} -  \Lambda_{ij},
        \end{align*}
        where $\Bar{\lambda} :=  \lambda - \frac{1}{\sqrt{m_{1}n_{1}}}$, 
        if we choose 
        $
            \Xi_{ij} = \gamma.
        $

    \item 
        \textbf{Case 2:} If $i \in U_1$, $j \in V_1$ and  $(i,j) \in \Omega $, then
        \begin{equation*}
            X_{ij} = Y_{ij} =  \frac{1}{ \sqrt{m_{1}n_{1}}}.
        \end{equation*}
        The complementary slackness condition \eqref{8c} is only satisfied in this case if 
        $
            \Xi_{ij} = 0.
        $
        In this case,
        \begin{equation*}
            W_{ij}  = \Bar{\lambda} - \gamma  -\Lambda_{ij}.
        \end{equation*}
    \item 
        \textbf{Case 3:} Suppose that 
        $i \in U_r,$ $j \in V_s$ for some $r, s \ge 2$.
        Then we choose
        \begin{equation} \label{eq: Case 3}
            W_{ij}  = \lambda \cdot
            \begin{cases}                 
                1, & \text{if } (i,j) \notin \Omega, \\
                - \frac{p_{rs}}{1 - p_{rs}},  & \text{if } (i,j) \in \Omega.
            \end{cases}
        \end{equation}
        We choose $\Xi_{ij} = W_{ij} - \lambda + \gamma$ according to~\eqref{8a} and~\eqref{8e}.
        
    \item \textbf{Case 4:}
        Suppose that $i \in U_1$, $j \in V_s$ for $s \ge 2$.
        In this case, we choose $W_{ij}$ and $\Xi_{ij}$ as follows so that~\eqref{8a} and~\eqref{8e} are satisfied:
        \begin{align} \label{eq:Case-4-W}
            W_{ij}  &= 
            \begin{cases}
                \lambda, & \text{if } (i,j) \notin \Omega, \\
                -\lambda \left( \frac{\nu_{j}}{m_1-\nu_{j}} \right), & \text{if } (i,j) \in \Omega,
            \end{cases}
            \\ 
            \Xi_{ij} &= 
            \begin{cases} 
            \gamma, & \text{if } (i,j) \notin \Omega, \\
            \gamma - \lambda \left( \frac{m_1}{m_1-\nu_{j}}\right), & \text{if } (i,j) \in \Omega,
            \end{cases}
        \end{align}
        where $m_1 := \lvert U_1 \rvert$ and $\nu_j$ denotes the number of nonzero entries of the $j$th column of $\boldsymbol{A}$ in rows indexed by $U_1$.

    \item \textbf{Case 5:}
        Similarly, if $i \in U_r$, $r \ge 2$, $j \in V_1$  we choose
        \begin{align*}
            W_{ij}  &= 
            \begin{cases}
                \lambda, & \text{if } (i,j) \notin \Omega, \\
                -\lambda \left( \frac{\mu_{i}}{n_1-\mu_{i}} \right), & \text{if } (i,j) \in \Omega,
            \end{cases}
            \\ 
            \Xi_{ij} &=
            \begin{cases} 
            \gamma, & \text{if } (i,j) \notin \Omega, \\
            \gamma - \lambda \left( \frac{n_1}{n_1-\mu_{i}}\right), & \text{if } (i,j) \in \Omega,
            \end{cases}
        \end{align*} 
        where $n_1 = \lvert V_1 \rvert$ and $\mu_i$ denotes the number of nonzero entries of the $i$th row of $\boldsymbol{A}$
        in columns indexed by $V_1$.

\end{itemize}
This exhausts all possible values of $W_{ij}$ and $\Xi_{ij}$. We will use this choice to choose the remaining Lagrange multipliers.

\subsubsection{Construction of $\BL$}
\label{sec:BL-choice}

\renewcommand{\y}{\boldsymbol{y}}
\renewcommand{\z}{\boldsymbol{z}}

By construction, the proposed Lagrange multipliers satisfy the stationarity condition~\eqref{8a} and the complementary slackness conditions~\eqref{8b} and~\eqref{8c}.
Moreover, for any $i \in U - U_1$, we have 
\begin{equation*}
    [\W \bu]_i = \sum_{j\in V_1} W_{ij} = \mu_i \lambda - (n-\mu_i) \lambda \left(\frac{\mu_i}{n_1 - \mu_i} \right) = 0.
\end{equation*}
By a symmetric argument, we have $[\W^T \bv]_j = 0$ for all $j \in V - V_1$. 

We next propose a choice of $\Lambda_{ij}$ for $i \in U_1,$ $j \in V_1$ ensuring that $[\W\bu]_i = 0$ and $[\W^T \bv]_j = 0$.
Verifying when these choices of $\BL$ and $\BX$ are nonnegative and $\|\W\| < 1$ with high probability will complete the proof.

We parametrize the (unknown) nonzero entries of $\BL$ as
\begin{equation}
    \label{eq: yz}
    \Lambda_{ij} =: y_i + z_j
\end{equation}
for each $(i,j) \in U_1 \times V_1$. That is, 
\begin{equation*}
    \BL(U_1, V_1) = \y \e^T + \e \z^T
\end{equation*}
for some $\y \in \RR^{m_1},$ $\z\in \RR^{n_1}$.
Under this parametrization of $\BL$, the conditions $\W\bu = \0$ and $\W^T\bv = \0$ are satisfied if $\y$ and $\z$ are solutions of the linear system 
\begin{equation} \label{9a}
    \left(
        \begin{array}{cc}
            n_{1} \I & \e\e^T\\ 
            \e\e^T & m_{1} \I
        \end{array}
    \right)
    \left(
        \begin{array}{c}
            \y \\
            \z
        \end{array}
    \right) = 
    \left(
        \begin{array}{c}
            -\gamma \boldsymbol{\bar \mu} + n_{1} \Bar{\lambda} \e \\
            -\gamma \boldsymbol{\bar \nu} + m_{1} \Bar{\lambda} \e
        \end{array}
    \right), 
\end{equation} 
where 
\begin{align*}
   \Bar \mu_{i} &:= n_{1} - \mu_{i}, \hspace{0.35in}
   \Bar \nu_{j} := m_{1} - \nu_{i} 
\end{align*}
for all $i \in U_1$, and $j \in V_1$.

The coefficient matrix of this linear system is 
singular with null space spanned by $(\e,-\e) \in \RR^{m_1 + n_1}.$
However, the unique solution of the nonsingular linear system 
\begin{equation} \label{9a-2}
    \left(
        \left(
            \begin{array}{cc}
                n_{1} \I & \e\e^T\\ 
                \e\e^T & m_{1} \I
            \end{array}
        \right)
        + 
        \theta
        \begin{pmatrix} \e \\ -\e \end{pmatrix}        
        \begin{pmatrix}
            \e & - \e^T
        \end{pmatrix}
    \right)            
    \left(
        \begin{array}{c}
            \y \\
            \z
        \end{array}
    \right) = 
    \left(
        \begin{array}{c}
            -\gamma \boldsymbol{\bar \mu} + n_{1} \Bar{\lambda} \e \\
            -\gamma \boldsymbol{\bar \nu} + m_{1} \Bar{\lambda} \e
        \end{array}
    \right),
\end{equation} 
obtained by perturbing~\eqref{9a} by $\theta(\e,-\e)$
is a solution of~\eqref{9a}
for all $\theta > 0$.
Choosing $\theta = 1$, the coefficient matrix of~\eqref{9a-2} becomes 
\begin{align*}
    \left(
        \begin{array}{cc}
            n_{1} \I & \e\e^T\\ 
            \e\e^T & m_{1} \I
        \end{array}
    \right) +
    \theta
    \begin{pmatrix} \e \\ -\e \end{pmatrix}        
        \begin{pmatrix}
            \e & - \e^T
        \end{pmatrix}    
    &= 
    \left(
        \begin{array}{cc}
            n_1 \I & \e\e^T\\ 
            \e\e^T & m_1 \I
        \end{array}
    \right) + 
    \theta \left(
        \begin{array}{cc}
            \e\e^T & -\e\e^T\\ 
            -\e\e^T & \e\e^T
        \end{array}
    \right)\\
    &= \left(
        \begin{array}{cc}
            n_1 \I + \theta \e\e^T & \e\e^T - \theta \e\e^T\\ 
            \e\e^T - \theta \e\e^T & m_1 \I + \theta \e\e^T
        \end{array}
    \right)\\
    &= \left(
        \begin{array}{cc}
            n_1 \I + \e\e^T & 0\\ 
            0 & m_1 \I + \e\e^T
        \end{array}
    \right).
\end{align*}
In this case,~\eqref{9a-2} reduces to
\begin{align*}
    (n_1 \I + \e\e^T) \y &= -\gamma \boldsymbol{\bar \mu} + n_{1} \Bar{\lambda} \e  \\     
     (m_1 \I + \e\e^T) \z &= -\gamma \boldsymbol{\bar \nu} + m_{1} \Bar{\lambda} \e, 
\end{align*}
which has unique solution given by 
\begin{align}
\y &= (n_1 \I + \e\e^T)^{-1} (-\gamma \boldsymbol{\bar \mu} + n_{1} \Bar{\lambda} \e) \label{9b}\\
\z &= (m_1 \I + \e\e^T)^{-1} (-\gamma \boldsymbol{\bar \nu} + m_{1} \Bar{\lambda} \e). \label{9c}
\end{align}
Applying the Sherman-Morrison-Woodbury-Formula \cite[Theorem 1.1]{deng2011generalization} and \cite[Section 2.1.4]{van1996matrix}
to Equation~\eqref{9b} yields
\begin{align}
    \y &= 
            \frac{1}{n_1}
            \left(
                \I - \frac{1}{m_1 + n_1} \e \e^T 
            \right)
            (-\gamma \boldsymbol{\bar \mu} + n_{1} \Bar{\lambda} \e)
        \notag \\ 
        &=
        \frac{1}{n_1} 
        \left(
            -\gamma \boldsymbol{\bar\mu} 
            +
            \left(
                \frac {\gamma \boldsymbol{\bar\mu}^T \e}{m_1 + n_1} 
            \right)
            \e 
            + 
            \left(
                \frac{n_1^2}{m_1 + n_1}
            \right)
            \bar\lambda \1
        \right). 
        \label{eq: y formula}
\end{align}
and
\begin{equation} \label{eq: z formula}
\z = \frac{1}{m_1} 
        \left(
            -\gamma \boldsymbol{\bar\nu} 
            +
            \left(
                \frac {\gamma \boldsymbol{\bar\nu}^T \e}{m_1 + n_1} 
            \right)
            \e 
            + 
            \left(
                \frac{m_1^2}{m_1 + n_1}
            \right)
            \bar\lambda \1
        \right). 
\end{equation}

\subsection{Nonnegativity of $\BL$}

The entries of $\bbm$ and $\bbn$ are binomial random variables corresponding to $n_{1}$ and $m_{1}$ independent Bernoulli trials, respectively, each with probability of success $1 - p_{11}$. Thus,
\begin{align*}
    \EX [\bar\mu_i] &= (1-p_{11}) n_1, & \Var[\bar\mu_i] &= p_{11}(1-p_{11}) n_1, \\ 
    \EX [\bar\nu_i] &= (1-p_{11}) m_1, & \Var[\bar\nu_i] &= p_{11}(1-p_{11}) m_1.
\end{align*}
Moreover, the number of nonzero entries in the $U_1 \times V_1$ block of $\boldsymbol{A}$ is equal to $\bbm^T \e = \bbn^T \e$. This is also binomially distributed, corresponding to $m_1 n_1$ independent Bernoulli trials with probability of success $1- p_{11}$:
\begin{align*}
    \EX [\bbm^T\e] &= (1-p_{11}) m_1 n_1, & \Var[\bbm]_i &= p_{11}(1-p_{11}) m_1 n_1.
\end{align*}
Applying linearity of expectation implies that 
\begin{align}
    \mathbf{E}[y_{i}] 
    &=   
        \frac{n_{1} \Bar{\lambda} }{m_1 + n_1} 
            + \frac{\gamma }{n_1(n_{1} + m_{1})}E[\boldsymbol{\bar \mu}^{T} \e] 
            - \frac{\gamma }{n_{1}}E[\boldsymbol{\bar \mu}]_i 
    \notag \\
    &=
        \frac{n_{1}\Bar{\lambda} e}{m_1 + n_1} 
            + \frac{\gamma m_1}{m_1 + n_1}(1 - p_{11}) 
            - \gamma(1 - p_{11})
    \notag \\
    &= 
    \left( 
        \frac{n_1}{m_1 + n_1} 
    \right)
    \left(
        \bar\lambda - \gamma (1 - p_{11})
    \right)
    \label{eq: E y}
\end{align}
for all $i \in U_1$.
Following an identical calculation, we obtain 
\begin{equation}    \label{eq: E z}
   \mathbf{E}[z_{j}] = \frac{m_{1}}{m_1 + n_1}(\Bar{\lambda} - \gamma (1 - p_{11}))
\end{equation}
for all $j \in V_1$.
For any $\tau > 0$, selecting 
\begin{equation}\label{eq: E lam 1}
\lambda = \frac{1}{\sqrt{m_{1}n_{1}}} + \gamma (1 - p_{11}) + \gamma \tau
\end{equation} 
ensures that the entries of $\BL$ are strictly positive in expectation.


Next, our focus shifts to determining $\tau$ in order to guarantee that both $\y$ and $\z$ have positive entries with high probability. To achieve this, we will repeatedly apply a the following specialization of the classical Bernstein inequality \cite[Section 2.8]{boucheron2013concentration} to bound the sum of independent Bernoulli random variables.

\begin{lemma} \label{lem4a}
    Consider a sequence of $k$ independent $\{0, 1\}$ Bernoulli random variables, denoted by $x_{1}, . . ., x_{k}$,
    each having a success probability of $\rho$. 
    Let $s = \sum_{i=1}^{k}x_{i}$ represent the binomially distributed random variable indicating the number of successes. 
    Then
    \begin{align}\label{eq: Bernstein}
            \prob\left( \lvert s - \rho k \rvert > 6\max \left\{ \sqrt{\rho(1 - \rho)k\log t}, \log t \right\}\right) \le 2t^{-6}
    \end{align}
    for all $t > 0$.
\end{lemma}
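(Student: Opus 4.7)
The plan is to obtain \cref{lem4a} as a direct specialization of the classical Bernstein inequality for sums of independent bounded random variables. I would start by centering: set $X_i := x_i - \rho$ for each $i$, so the $X_i$ are independent, zero-mean, uniformly bounded by $1$ in absolute value, with $\Var(X_i) = \rho(1-\rho)$ and hence $\sum_{i=1}^{k}\EX[X_i^2] = k\rho(1-\rho)$. Plugging these parameters into the classical two-sided Bernstein bound (e.g.\ \cite[\S 2.8]{boucheron2013concentration}) yields
\[
\prob\bigl(|s - \rho k| > \alpha\bigr) \le 2\exp\!\left(-\frac{\alpha^2/2}{k\rho(1-\rho) + \alpha/3}\right)
\]
for every $\alpha > 0$. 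The proof then reduces to substituting $\alpha = 6\max\bigl\{\sqrt{\rho(1-\rho)k\log t},\, \log t\bigr\}$ and showing that, in both regimes of the maximum, the exponent collapses to exactly $-6\log t$.

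The verification splits into two short cases. In the \emph{sub-Gaussian regime} where $k\rho(1-\rho) \ge \log t$, one has $\alpha = 6\sqrt{\rho(1-\rho)k\log t}$; using $\sqrt{\rho(1-\rho)k\log t}\le k\rho(1-\rho)$ under the regime hypothesis, the denominator is at most $3k\rho(1-\rho)$, while the numerator equals $18\,k\rho(1-\rho)\log t$, so the exponent is at most $-6\log t$. In the \emph{sub-exponential regime} where $k\rho(1-\rho) < \log t$, one has $\alpha = 6\log t$, the denominator is at most $3\log t$, the numerator is $18(\log t)^2$, and the exponent is again at most $-6\log t$. In either case the probability is bounded by $2e^{-6\log t} = 2t^{-6}$, which is the stated bound (for $t\le 1$ the claim is vacuous because $2t^{-6}\ge 2$).

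Essentially no obstacle arises: the proof is bookkeeping built on top of an off-the-shelf inequality. The only mild subtlety is that the numerical constant $6$ in the statement is exactly what is needed so that the same exponent $-6\log t$ surfaces in both regimes of the maximum; the two-sided factor of $2$ is the usual price for combining upper- and lower-tail Bernstein. One must also be careful that, in the sub-Gaussian regime, the regime threshold $k\rho(1-\rho)\ge \log t$ is precisely what allows us to absorb the $\alpha/3$ term into $k\rho(1-\rho)$ in the denominator, ensuring the constant $6$ (rather than something smaller) actually emerges.
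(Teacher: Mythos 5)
Your proof is correct and follows exactly the route the paper intends: the paper states Lemma~\ref{lem4a} as a direct ``specialization of the classical Bernstein inequality'' (citing \cite{boucheron2013concentration}, Section 2.8) without writing out the details, and your two-regime verification with $\alpha = 6\max\{\sqrt{\rho(1-\rho)k\log t},\,\log t\}$ is precisely the omitted bookkeeping. The handling of the degenerate case $t\le 1$ and the observation that the constant $6$ is what makes both regimes collapse to the exponent $-6\log t$ are both accurate.
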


Applying Lemma \ref{lem4a} with $t = N^* = \max\{M, N\}$ and  the union bound shows that
\begin{align}
         \lvert {\bar \mu}_{i} - (1 - p_{11})n_{1}  \rvert 
            & \le 6\max \left\{ 
                    \sqrt{\sigma_{11}^{2} n_{1}\log{N^*}}, 
                    \log{N^*} 
                    \right\}, \label{eq: mu Bern}\\
        \lvert {\bar \nu}_{j} - (1 - p_{11})m_{1}  \rvert 
            & \le 6\max \left\{ 
                \sqrt{\sigma_{11}^{2}m_{1}\log{N^*}}, 
                \log {N^*} 
                \right\}, \label{eq: nu Bern} \\
        \lvert \bbm^T\e - m_1 n_1 (1-p_{11}) \rvert 
            & \le 6 \max \left\{ 
                \sqrt{\sigma_{11}^2 m_1 n_1 \log N^*}, 
                \log {N^*} 
                \right\}
                \label{eq: mu 1 Bern}
\end{align}
for all $i \in U_{1}$ and $j \in V_{1}$ with high probability.
Combining~\eqref{eq: mu Bern},~\eqref{eq: nu Bern}, and~\eqref{eq: mu 1 Bern} and applying the triangle inequality shows that
\begin{align}
    \lvert y_{i} - \mathbf{E}[y_{i}] \rvert 
    &\le 
        \frac{\gamma}{n_{1}} \left(
            \lvert (\boldsymbol{\bar \mu}_{i} - \mathbf{E}[\boldsymbol{\bar \mu}_{i}])\rvert 
            + {\frac{1}{m_1 + n_1}} \lvert  \boldsymbol{\bar \mu}^{T}e - \mathbf{E}[\boldsymbol{\bar \mu}^{T}e] \rvert\right)\nonumber\\
    &\le 
        6\gamma\max \left\{ \sqrt{\frac{\sigma_{11}^{2}\log{N^*}}{n_{1}}}, \frac{\log{N^*}}{n_{1}} \right\} \nonumber\\
        &\quad\quad+ \frac{6\gamma}{m_1 + n_1}\max\left\{
        \sqrt{\frac{\sigma_{11}^{2} m_{1}\log{N^*}}{n_{1}}}, \frac{\log{N^*}}{n_{1}} \right\}\nonumber\\
    &= 
    \O \left(
        \gamma \max 
            \left\{ 
                \sqrt{\frac{\sigma_{11}^{2}\log{N^*}}{n_{1}}}, 
                \frac{\log{N^*}}{n_{1}} 
            \right\}
        \right)
        \label{eq4.1}
\end{align}
for all $i \in U_1$ w.h.p. By a symmetric argument
\begin{align}
    \lvert z_{j} - \mathbf{E}[z_{j}] \rvert 
    & = \O \left( 
        \gamma \max 
        \left\{ \sqrt{\frac{\sigma_{11}^{2}\log{N^*}}{m_{1}}}, 
        \frac{\log{N^*}}{m_{1}} 
        \right\}
        \right)\label{eq4.2}
\end{align}
for all $j \in V_1$ w.h.p.
Substituting \eqref{eq4.1} and \eqref{eq4.2} into Equation \eqref{eq: yz} shows that
\begin{align*}
    \Lambda_{ij} 
        &= y_{i} + z_{j}\nonumber\\ 
        &\ge
            \mathbf{E}[y_{i}] - \lvert y_{i} -\mathbf{E}[y_{i}]\rvert + \mathbf{E}[z_{j}] -  \lvert z_{j} -\mathbf{E}[z_{j}]  \rvert \nonumber\\
        &= 
            \frac{n_{1}}{m_1 + n_1}(\Bar{\lambda}  - \gamma (1 - p_{11})) -\lvert y_{i} -\mathbf{E}[y_{i}]\rvert\nonumber\\ 
            &\hspace{0.5in} +
                \frac{m_{1}}{m_1 + n_1}(\Bar{\lambda} - \gamma (1 - p_{11})) -  \lvert z_{j} -\mathbf{E}[z_{j}]  \rvert \nonumber\\
        &\ge 
            \Bar{\lambda}  - \gamma (1 - p_{11}) - c_\tau \gamma \max \left\{ \sqrt{\frac{\sigma_{11}^{2}\log{N^*}}{n_{1*}}}, \frac{\log{N^*}}{n_{1*}} \right\}\nonumber\\
        &= 
            \gamma \tau - c_\tau \gamma \max \left\{ \sqrt{\frac{\sigma_{11}^{2}\log{N^*}}{n_{1*}}}, \frac{\log{N^*}}{n_{1*}}\right\}
\end{align*}
for some constant $c_\tau >0$ w.h.p.~for all $i \in U_{1}$ and $ j \in V_{1}$.
Therefore, choosing sufficiently large $\tau > 0$  ensures that $\BL$ is nonnegative w.h.p.
Specifically choosing 
\begin{align}
    \tau = c_\tau \max \left\{ \sqrt{\frac{\sigma_{11}^{2}\log{N^*}}       {n_{1*}}}, \frac{\log{N^*}}{n_{1*}}\right\} \label{eq4.3}
\end{align}
guarantees that every entry of $\BL$ is nonnegative w.h.p.

 \subsection{Nonnegativity of $\BX$}
 \label{sec:nonneg-Xi}

We next derive conditions on the regularization parameter $\gamma$ so that the entries of the dual variable $\BX$ are nonnegative with high probability.

It is essential to recall that ${\Xi}_{ij}$ takes values 0 and $\gamma$ for all $(i, j)$ except in Case 3 and the $(i,j)\in \Omega$ subcase in Cases 4 and 5 considered in the choice of $\W$ and $\BX$ in Section~\ref{sec: W cases}. It remains to establish conditions under which $\Xi_{ij} \ge 0$ w.h.p.~in these remaining cases. 

\subsubsection{Nonnegativity in Case 3}

Consider $i \in U_r$ and $j \in V_s$ for $r,s \ge 2$.


If $(i,j) \notin \Omega$, then we must have $\Xi_{ij} = \gamma > 0$ by~\eqref{8e}.
On the other hand, suppose that $(i,j) \in \Omega$.
In this case,
\begin{align*}
 \Xi_{ij} &= W_{ij} - \lambda + \gamma = \gamma - \lambda \left( \frac{p_{rs}}{1 - p_{rs}} + 1 \right) = \gamma - \frac{\lambda}{1-p_{rs}}.
\end{align*}
Substituting $\lambda = \frac{1}{\sqrt{m_1n_1}} + \gamma (1-p_{11}) + \gamma \tau$, we have 
\begin{align*}
    (1 - p_{rs}) \Xi_{ij} 
    &= \gamma ( 1- p_{rs}) - \frac{1}{\sqrt{m_1n_1}}  -\gamma (1-p_{11}) - \gamma \tau \\ 
    &= \gamma ( p_{11} - p_{rs} - \tau ) - \frac{1}{\sqrt{m_1n_1}}.
\end{align*}
Under the assumption
\begin{equation}\label{eq:tau-gap-3}
    p_{11} - p_{rs} \ge 2 \tau 
    = 2 c_\tau \max \left\{ \sqrt{\frac{\sigma_{11}^{2}\log{N^*}}       {n_{1*}}}, \frac{\log{N^*}}{n_{1*}}\right\},
\end{equation}
we have 
$$
(1 - p_{rs}) \Xi_{ij} \ge \frac{\gamma}{2}(p_{11} - p_{rs}) -  \frac{1}{\sqrt{m_1n_1}}.
$$
This implies that $\Xi_{ij} \ge 0$ if
\begin{equation}\label{eq:gap-3}
 \frac{\gamma}{2}(p_{11} - p_{rs}) \ge  \frac{1}{\sqrt{m_1n_1}}. 
\end{equation}

\subsubsection{Nonnegativity in Cases 4 and 5}

We start with Case 4 in the construction of $\W$ and $\BX$. 
Fix $i \in U_1$ and $j \in V_s$ for $s \ge 2$.
Then,
\begin{align}\label{eq4.4}
    \Xi_{ij} &= \gamma - \lambda \frac{m_1}{m_1 - \nu_j}.
\end{align}
Note that $\Xi_{ij}$ is a monotonically decreasing function of $\nu_j$ on the interval $[0, m_1)$.
Therefore, $\Xi_{ij}$ is minimized when $\nu_j$ is maximized over this interval. 
Moreover,
the Bernstein inequality implies that
\begin{align}
    \nu_j
    &\le p_{1s}m_{1} 
       + 
            6\max \left\{ \sqrt{\sigma_{p_{1s}}^{2}m_{1}\log{N^*}},         \log{N^*} \right\}. \label{eq4.5}
\end{align}
with high probability.
Substituting~\eqref{eq4.3} into~\eqref{eq4.4} and rearranging we see that $\Xi_{ij} \ge 0$ w.h.p.~if
\begin{align*}
     \gamma \left(p_{11} - p_{1s} - \tau - 6\max\left\{\sqrt{\sigma_{1s}^2 \frac{\log N^{*}}{m_1}}, \frac{\log N^{*}}{m_1} \right\} \right) &\ge \frac{1}{\sqrt{m_1 n_1}}.
\end{align*}
If 
\begin{equation} \label{eq:gap-case4}
    p_{11} - p_{1s} \ge 2 \left( \tau + 6\max\left\{\sqrt{\sigma_{1s}^2 \frac{\log N^*}{m_1}}, \frac{\log N^*}{m_1} \right\} \right),
\end{equation}
then we have $\Xi_{ij} \ge 0$ if
\begin{equation*}
    \frac{\gamma}{2} (p_{11} - p_{1s} ) \ge \frac{1}{\sqrt{m_1 n_1}}.
\end{equation*}
Rearranging shows that $\Xi_{ij} \ge 0$ w.h.p.~if 
\eqref{eq:gap-case4} holds and we choose
\begin{equation}
    \label{eq:gamma-4}
    \gamma \ge \frac{2}{(p_{11} - p_{1s}) \sqrt{m_1 n_1}}.
\end{equation}


Establishing nonnegativity of $\BX$ under {Case 5} in Section~\ref{sec: W cases} follows a similar argument.
Suppose that $i \in U_1$, $j \in V_r$ for $r \ge 2$ such that $(i,j) \in \Omega$. Then $\Xi_{ij} \ge 0$ with high probability if 
\begin{equation} \label{eq:gap-case5}
    p_{11} - p_{r1} \ge 2 \left( \tau + 6\max\left\{\sqrt{\sigma_{1s}^2\frac{\log N^*}{n_1}}, \frac{\log N^*}{n_1} \right\} \right)
\end{equation}
and 
\begin{equation}
    \label{eq:gamma-5}
    \gamma \ge \frac{2}{(p_{11} - p_{r1}) \sqrt{m_1 n_1}}.
\end{equation}

\noindent This establishes that $\BX$ has nonnegative entries in all cases with high probability.

\subsection{A Bound on Matrix $\boldsymbol{W}$}

 We conclude the proof by establishing a sufficient condition involving the parameters $m_{1}, n_{1}, M, N, p_{rs}$ to guarantee that the constructed $\W$, as described above, satisfies the condition $\lVert \W \rVert \le 1$ with high probability. 
 
\subsubsection{A Decomposition of $\boldsymbol{W}$}
\renewcommand{\S}{\boldsymbol{S}}

We establish an upper bound on $\lVert \W \rVert$ by utilizing the decomposition $\W = \gamma \boldsymbol{R} + \lambda \boldsymbol{S}$ with matrices $\R$ and $\S$ constructed as follows.
\begin{itemize} 
\item  For each $i \in U_1$, $j\in V_1$, we choose 
    $\displaystyle R_{ij} = \frac{W_{ij}}{\gamma}$ and $S_{ij} = 0$;
\item We set
    $R_{ij} = 0$ 
    $\displaystyle S_{ij} = \frac{W_{ij}}{\lambda}$
for all $(i,j) \notin U_1 \times V_1$.
\end{itemize}

\newcommand{\n}[1]{\lVert{#1}\rVert}
\newcommand{\nW}{\n{\W}}
\newcommand{\nR}{\n{\boldsymbol{R}}}
\newcommand{\nS}{\n{\S}}

\noindent We will bound $\n\W$ by obtaining bounds on each of $\nR$ and $\nS$ and then applying the triangle inequality.
To do so, we will make repeated use of the following bound on the following matrix concentration inequality, which provides a bound on the spectral norm of random matrices with mean-zero entries.

 \begin{lemma} \label{lem:BvH}
     Let $\A = [a_{ij}] \in \mathbf{R}^{m\times n} $ be a random matrix with entries satisfying   $\E[a_{ij}] = 0$, $\Var[a_{ij}] \le \sigma^2$, and $\lvert a_{ij} \rvert \le B$    
     for all $i=1,\dots, m$ and $j=1,\dots, n$ for some positive constants $\sigma^2$ and $B$.
     Let 
     \begin{align*}
     \sigma_{1} &:= \max_{i} \sqrt{\sum_{j} \mathbf{E}[a_{ij}^{2}]} \le  \sigma \sqrt{n} \\ 
     \sigma_{2} &:= \max_{j} \sqrt{\sum_{i} \mathbf{E}[a_{ij}^{2}]} \le \sigma\sqrt{m} 
     \end{align*}
     There exist a constant $C>0$ such that
     \[
        \Pr\left(\n\A \ge C \max \left\{\sigma_{1} + \sigma_{2} , \sqrt{B\log{t}} \right\}\right)
     \le \max\{ n,m \} t^{-7}
     \]
     for all $t > 0$.
     
 \end{lemma}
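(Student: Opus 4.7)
The plan is to invoke the sharp spectral-norm concentration inequality of Bandeira and van Handel \cite{bandeira2016sharp}, whose hypotheses (independent, mean-zero, entrywise bounded entries) match ours exactly, after reducing the rectangular case to a Hermitian one via the standard dilation trick.

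First I would form the Hermitian dilation
$$
\tilde\A := \begin{pmatrix} \0 & \A \\ \A^{T} & \0 \end{pmatrix} \in \RR^{(m+n)\times(m+n)},
$$
whose nonzero eigenvalues are exactly the $\pm$ singular values of $\A$, so $\n{\tilde\A} = \n\A$. The entries on and above the off-diagonal block of $\tilde\A$ are independent, mean-zero, and $B$-bounded, placing $\tilde\A$ within the scope of \cite{bandeira2016sharp}.

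Next I would translate the variance parameters. The $k$-th row of $\tilde\A$ is either a row of $\A$ (when $k\le m$) or a column of $\A^{T}$ (when $k > m$), so the row second-moment parameter of $\tilde\A$ is exactly $\max\{\sigma_{1},\sigma_{2}\}$, which sits between $\tfrac{1}{2}(\sigma_{1}+\sigma_{2})$ and $\sigma_{1}+\sigma_{2}$; the entrywise bound $B$ carries over verbatim. Applying Bandeira-van Handel to $\tilde\A$ then yields, for any $u>0$, a bound of the form
$$
\n\A \le C_{0}(\sigma_{1}+\sigma_{2}) + u \quad \text{with probability at least } 1 - (m+n)\exp(-c_{0}u^{2}/B),
$$
for absolute constants $C_{0},c_{0}>0$. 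Choosing $u = C_{1}\sqrt{B\log t}$ with $C_{1}$ large enough drives the failure probability below $(m+n)\, t^{-7}$, and the right-hand side is then at most $C\max\{\sigma_{1}+\sigma_{2},\sqrt{B\log t}\}$ after absorbing $C_{0},C_{1}$ into $C$. This yields the stated inequality, with the prefactor $m+n$ (dominated by $2\max\{m,n\}$) playing the role of $\max\{n_{r},m_{s}\}$ in the statement.

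The main obstacle is essentially bookkeeping: calibrating $u$ against the Bandeira-van Handel truncation level so that the two summands on the right-hand side collapse cleanly into a single $\max\{\cdot,\cdot\}$ while keeping the tail polynomial of prescribed order $t^{-7}$. If the full strength of \cite{bandeira2016sharp} is not needed, one may instead apply Tropp's rectangular matrix Bernstein inequality directly to the decomposition $\A = \sum_{i,j} a_{ij} E_{ij}$: its variance parameter is $\max\{\sigma_{1}^{2},\sigma_{2}^{2}\} \le (\sigma_{1}+\sigma_{2})^{2}$, and the resulting bound has the same qualitative shape at the cost of only a factor of two in the leading constant.
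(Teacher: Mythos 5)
The paper does not actually prove this lemma; it simply cites it as a special case of \cite[Remark 4.11]{van2017dimension}. Your dilation-plus-Bandeira--van~Handel argument is precisely the standard derivation sitting behind that citation (the rectangular case in \cite{bandeira2016sharp} is itself obtained by applying the symmetric result to the Hermitian dilation), so your main route is sound and arguably more informative than what the paper provides. The translation of the variance parameters is correct: the row second-moment parameter of the dilation is $\max\{\sigma_1,\sigma_2\}$, which is comparable to $\sigma_1+\sigma_2$ up to a factor of $2$, and the prefactor $m+n \le 2\max\{m,n\}$ accounts for the (sloppily written) $\max\{n_r,m_s\}$ in the statement.

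Two caveats. First, the tail you quote, $(m+n)\exp(-c_0 u^2/B)$, is not what Bandeira--van~Handel give: their exponent is $-u^2/(c\,\sigma_*^2)$ with $\sigma_* \le B$, i.e.\ $B^2$ in the denominator, not $B$. With the correct exponent, calibrating the failure probability to $t^{-7}$ forces $u = C_1 B\sqrt{\log t}$, which is the dimensionally consistent threshold; the lemma's $\sqrt{B\log t}$ only follows from this when $B\le 1$ (which does hold in every application in the paper, where $B=1$ or $B=\max\{1,\tilde\sigma^2\}\le 1$, but your calibration as written silently assumes the weaker exponent). Second, the closing remark that Tropp's rectangular matrix Bernstein inequality yields ``the same qualitative shape at the cost of only a factor of two'' is wrong: matrix Bernstein gives a threshold of order $\max\{(\sigma_1+\sigma_2)\sqrt{\log t},\, B\log t\}$, i.e.\ it cannot remove the $\sqrt{\log t}$ multiplying the variance term. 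That extra logarithm is exactly what the sharp inequality of \cite{bandeira2016sharp} eliminates, and it is the reason the paper needs this lemma rather than the matrix Bernstein inequality it already uses in the appendix for Lemma~\ref{lem4.4}. Dropping that fallback sentence and fixing the $B$ versus $B^2$ bookkeeping would make your argument a complete and correct proof of the (suitably restated) lemma.
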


Lemma~\ref{lem:BvH} is a special case of the matrix concentration inequality given in \cite[Remark 4.12]{van2017dimension}. Note there is no assumption of symmetry, i.e., that $a_{ij} = a_{ji}$, in the hypothesis of Lemma~\ref{lem:BvH}.

\subsubsection{A Bound on $\boldsymbol{R}$}

The following lemma gives a bound on $\nR$.

\begin{lemma} \label{lem: R bound}
    If the matrix $\boldsymbol{W}$ is constructed based on Cases 1 through 6 for a matrix $\A$ sampled from the planted dense submatrix model, then there exists a constant $c_R$ such that
    \begin{equation}\label{eq: R bound}
    \| \R \| \le c_R                     
                \max \left\{ 
                    \sqrt{ \sigma_{11}^2 n_1^* \log N}, 
                    \sqrt{\frac{n_1^*}{n_{1*}}} \log N
                \right\}
    \end{equation}
    with high probability,
    where $N^* = \max \{M,N\},$ $n_1^* = \max\{m_1, n_1\}$, and $n_{1*} = \min\{m_1, n_1\}.$
\end{lemma}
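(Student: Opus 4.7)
The plan is to show that $\R[U_1,V_1]$ is (a sign-flipped) double-centering of a mean-zero random matrix, which reduces the spectral-norm bound to an application of Lemma~\ref{lem:BvH}. The key point is that while the entries $R_{ij} = W_{ij}/\gamma$ depend on the random data in a complicated way through $y_i$, $z_j$, and $\bar\lambda$, all the deterministic and mean pieces cancel algebraically.

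First, I would unify Cases 1 and 2 of Section~\ref{sec: W cases}: letting $\chi_{ij} := 1 - A_{ij}$ be the indicator that $(i,j)\in\Omega$, both cases collapse to $W_{ij} = \bar\lambda - \gamma\chi_{ij} - y_i - z_j$ on $U_1\times V_1$. Substituting the closed forms~\eqref{eq: y formula},~\eqref{eq: z formula} for $y_i,z_j$ together with the choice of $\lambda$ from~\eqref{eq: E lam 1}, a direct expansion should yield
\[
R_{ij} = \frac{\bar\mu_i}{n_1} + \frac{\bar\nu_j}{m_1} - \frac{\bbm^T\e}{m_1 n_1} - \chi_{ij}
\]
on $U_1\times V_1$ (and zero elsewhere). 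This is exactly the image of $\chi[U_1,V_1]$ under the double-centering operator $X \mapsto -\bigl(\I - \tfrac{1}{m_1}\e\e^T\bigr)X\bigl(\I - \tfrac{1}{n_1}\e\e^T\bigr)$. Since centering projections annihilate the rank-one mean $(1-p_{11})\e\e^T$, we obtain $\R[U_1,V_1] = -P_U \boldsymbol{B} P_V$, where $P_U$, $P_V$ are the centering projections and $\boldsymbol{B} := \chi[U_1,V_1] - (1-p_{11})\e\e^T$ has independent, mean-zero entries bounded by $1$ with variance $\sigma_{11}^2$. By contractivity of projections, $\|\R\| \le \|\boldsymbol{B}\|$, and applying Lemma~\ref{lem:BvH} to $\boldsymbol{B}$ with $\sigma=\sigma_{11}$, $B=1$, and $t=N^*$ then delivers a spectral-norm bound of the required order with high probability.

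The main obstacle is the algebraic reduction in step one: keeping careful track of the row-sum, column-sum, and total-sum terms produced by $y_i + z_j$ and verifying that everything except the $-\chi_{ij}$ and the double-centering adjustments cancels (including the Sherman--Morrison contribution in~\eqref{eq: y formula}--\eqref{eq: z formula}). A secondary subtlety is recovering exactly the stated bound~\eqref{eq: R bound}, with its $\sqrt{n_1^*/n_{1*}}\log N$ tail term. If the direct projection estimate leaves a gap, I would instead decompose $\R[U_1,V_1] = -\boldsymbol{B} + \tilde\alpha\e^T + \e\tilde\beta^T - \tilde\gamma_0\,\e\e^T$ and bound the rank-three correction using scalar Bernstein (Lemma~\ref{lem4a}) applied to the row, column, and overall averages of $\boldsymbol{B}$; these averages concentrate to order $\O(\sqrt{\sigma_{11}^2 \log N / n_{1*}} + \log N / n_{1*})$, and multiplication by the $\sqrt{m_1}$ and $\sqrt{n_1}$ factors coming from the rank-one operator norms yields precisely the second term in~\eqref{eq: R bound}.
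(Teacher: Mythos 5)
Your proposal is correct, and your primary route is genuinely different from (and in fact sharper than) the paper's argument. The algebraic identity you target does hold: unifying Cases 1 and 2 as $W_{ij}=\bar\lambda-\gamma\chi_{ij}-y_i-z_j$ and substituting~\eqref{eq: y formula}--\eqref{eq: z formula}, the $\bar\lambda$ terms cancel (since $\tfrac{n_1}{m_1+n_1}+\tfrac{m_1}{m_1+n_1}=1$) and the two Sherman--Morrison sum terms combine via $\bbm^T\e=\bbn^T\e$ into $\tfrac{\gamma\,\bbm^T\e}{m_1n_1}$, giving exactly $R_{ij}=\tfrac{\bar\mu_i}{n_1}+\tfrac{\bar\nu_j}{m_1}-\tfrac{\bbm^T\e}{m_1n_1}-\chi_{ij}$, which is the same starting identity the paper writes down for $\R(U_1,V_1)$. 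From there the paths diverge. The paper splits $\R(U_1,V_1)$ into the centered noise matrix $\Q_1=(1-p_{11})\e\e^T-\H$ (handled by Lemma~\ref{lem:BvH}) plus three rank-one corrections $\Q_2,\Q_3,\Q_4$, each bounded separately via scalar Bernstein; it is these rank-one pieces that contribute the $\sqrt{\log N}$ and $\sqrt{n_1^*/n_{1*}}\log N$ factors in~\eqref{eq: R bound}. Your primary route instead observes that the whole expression is $-P_U\H P_V$ for the centering projections $P_U,P_V$, that these projections annihilate the rank-one mean, and that contractivity gives $\|\R\|\le\|\H-(1-p_{11})\e\e^T\|$ in one step; a single application of Lemma~\ref{lem:BvH} then yields $\O(\max\{\sqrt{\sigma_{11}^2 n_1^*},\sqrt{\log N^*}\})$, which is strictly smaller than the stated bound and hence suffices (your worry that the projection estimate might "leave a gap" is unfounded --- it overshoots the target in the right direction). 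Your fallback decomposition into $-\boldsymbol{B}$ plus a rank-three correction bounded by scalar Bernstein is essentially the paper's proof verbatim. The projection argument buys a cleaner and tighter bound on $\|\R\|$; the paper's term-by-term decomposition buys nothing extra here, though its looser form is what appears downstream in the bound on $\|\W\|$.
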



\renewcommand{\y}{\boldsymbol{y}}
\renewcommand{\z }{\boldsymbol{z}}
\begin{proof}


    By construction, $\|\R \| = \|\R(U_1,V_1)\|$.
    Note that 
    \begin{equation*}
        \R(U_1,V_1) = \frac{1}{n_1} \bs{\bar\mu}\e^T + \frac{1}{m_1} \e \bs{\bar\nu}^T - \frac{\bs{\bar\mu}^T \e}{m_1 n_1} \e \e^T - \H
    \end{equation*}
    by our earlier formulas for $\y$ and $\z$,
    where $\H$ is defined by $H_{ij} = 1$ if $(i,j) \in \Omega$ and $H_{ij} = 0$ otherwise. 
    This suggests that we can decompose $\R(U_1,V_1)$ as $\R(U_1,V_1) = \Q_{1} + \Q_{2} + \Q_{3} + \Q_{4}$, where
    \begin{align*}
        \Q_{1} &= (1-p_{11}) \e \e^{T} - \H\\
        \Q_{2} &= \frac{1}{n_{1}}\boldsymbol{\bar \mu} \e^{T} - (1-p_{11}) \e \e^{T}\\
        \Q_{3} &= \frac{1}{m_{1}}\e\boldsymbol{\bar \nu}^{T} - (1-p_{11}) \e \e^{T}\\
        \Q_{4} &= \left(\frac{(1-p_{11})m_{1}n_{1} - \boldsymbol{\bar \mu}^{T} \e}{m_{1}n_{1}}\right) \e \e^{T}\\
    \end{align*}
    We start by bounding $\Q_{1}$. 
    Notice that $\Q_{1} = (1-p_{11}) \e \e^{T} - \H$ is centered with mean-zero entries and variance  $\sigma_{{11}}^2 = p_{11}(1-p_{11})$. 
    Applying Lemma~\ref{lem:BvH} with $B=1$ and $t= N^* = \max\{N,M\}$, we have
    \begin{align} 
        \lVert \Q_{1} \rVert &= \lVert (1-p_{11}) \e \e^{T} - \H \rVert
        =  \mathcal{O} \left(
            \max \left\{
                \left(\sqrt{n_{1}}+\sqrt{m_{1}} \right)\sigma_{{11}}, \sqrt{\log{N^*}} 
            \right\}
            \right) \notag\\ 
            &= 
            \mathcal{O} \left(
            \max \left\{
                \sqrt{n_{1}^*\sigma^2_{{11}}}, \sqrt{\log{N^*}} 
            \right\}
            \right) \label{eq: Q1}
    \end{align}
    Next, taking $\Q_{2} = \frac{1}{n_{1}}(\boldsymbol{\bar \mu} \e^{T} - (1-p_{11}) \e \e^{T}$ and applying Lemma \ref{lem4a}, we have
    \begin{align}
        \lVert \Q_{2}\rVert^{2}
        &= \frac{1}{n_1} \| \boldsymbol{\bar \mu} - (1-p_{11})n_1 \| \|\e\| \notag \\ 
        &= \frac{1}{\sqrt{n_1}} \| \boldsymbol{\bar \mu} - (1-p_{11})n_1 \| \notag \\ 
        &= \O \left( \sqrt{m_1} \cdot 
                \frac{1}{\sqrt{n_1}} 
                \max \left\{
                    \sqrt{\sigma^2_{11} n_1 \log N}, 
                    \log N
                \right\}
            \right) \notag \\ 
        &= \O \left(                 
                \max \left\{
                    \sqrt{\sigma^2_{11} m_1 \log N}, 
                    \sqrt{\frac{m_1}{n_1}}\log N
                \right\}
            \right)
          \label{eq: Q2}
    \end{align}
    since 
    \begin{equation*}
        \lvert \boldsymbol{\bar \mu}_i - (1 - p_{11}) n_1 \rvert = 
        \O \left(                 
                \max \left\{
                    \sqrt{\sigma^2_{11} n_1 \log N}, 
                    \log N
                \right\}
            \right)
    \end{equation*}
    w.h.p.~for all $i \in U_1$ by Bernstein's inequality.\\
    By an identical argument
    $\Q_{3} = \frac{1}{m_{1}}(\e\mathbf{\boldsymbol{\bar \nu}^{T}} - (1-p_{11}) \e\e^T)$ satisfies
    \begin{align}
        \|\Q_3\|  &=             
            \O  \left( 
                \max \left\{
                    \sqrt{\sigma^2_{11} m_1 \log N}, 
                    \log N
                \right\}
            \right)
            \label{eq: Q3}            
    \end{align}
    with high probability
    It remains to bound $\|\Q_4\|$.
    We first note that 
    \begin{equation*}
        \|\Q_4\| = \sqrt{m_1 n_1} \left\lvert \frac{(1-p_{11})m_{1}n_{1} - \boldsymbol{\bar \mu}^{T}\e}{m_{1}n_{1}} \right\rvert.
    \end{equation*}
    Moreover, $\boldsymbol{\bar \mu}^{T}\e$ is a binomially distributed random variable corresponding to $m_1n_1$ independent Bernoulli trials with probability of success $1-p_{11}$.
    Applying Lemma~\ref{lem4a} one more time shows that 
    \begin{align}
        \lVert \Q_{4}\rVert  &= 
            \O \left(
            \max \left\{ 
                \sqrt{\sigma_{11}^{2}\log{N}}, 
                \frac{\log N}{\sqrt{m_{1}n_{1}}} 
            \right\}
            \right)\label{eq: Q4}
    \end{align}
    with high probability.
    
    Applying the triangle inequality, \eqref{eq: Q1}, \eqref{eq: Q2}, \eqref{eq: Q3}, \eqref{eq: Q4}, and the union bound shows that     
    \begin{align}
        \lVert \R\rVert  &= 
            \O \left(
                \max \left\{ 
                    \sqrt{ \sigma_{11}^2 n_1^* \log N}, 
                    \sqrt{\frac{n_1^*}{n_{1*}}} \log N
                \right\}
            \right).
        \label{eq: R1 bound}
    \end{align}
    with high probability.    
\end{proof}


\subsubsection{A Bound on $\boldsymbol{S}$}
\label{sec:S-bound}


We have the following bound on the second matrix, $\boldsymbol S$, in the decomposition of $\boldsymbol{W}$.

\begin{lemma}\label{lem: S bound}
    If the matrix $\boldsymbol{W}$ is constructed according to  Cases 1 through 6 for a matrix $\boldsymbol {A}$ sampled from the planted dense submatrix model, then there exist constant $c_S$ such that
\begin{equation} \label{eq: S Bound}
    \|\boldsymbol{S}\| \le c_S
            \max \left\{ \sqrt{\tilde \sigma^2 N^* \log N^*},
                \sqrt{\max\{1, \tilde \sigma^2\} \log N^*}, 
                (\log N^*)^{3/2} \right\}
\end{equation}
with high probability.
\end{lemma}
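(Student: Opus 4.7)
The plan is to write $\S = \S^{(3)} + \S^{(4)} + \S^{(5)}$, where the three summands are the restrictions of $\S$ to the Case~3 region $\bigcup_{r,s\ge 2} U_r\times V_s$, the Case~4 region $U_1\times (V-V_1)$, and the Case~5 region $(U-U_1)\times V_1$, respectively. By the triangle inequality it suffices to bound each piece. Throughout I will use that $S_{ij}=W_{ij}/\lambda$ is, by the construction in Section~\ref{sec: W cases}, a bounded quantity independent of $\lambda$.

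For $\S^{(3)}$ the entries are independent: a direct computation using the definition of $\psi_{rs}$ in~\eqref{eq:psi def} shows that each $S_{ij}$ takes two values, is bounded in absolute value by $1$, has mean zero, and has variance exactly $\min\{p_{rs}/(1-p_{rs}),(1-p_{rs})/p_{rs}\}\le \tilde\sigma_*^2$. Applying Lemma~\ref{lem:BvH} with $\sigma=\tilde\sigma_*$, $B=1$, and $t=N^*$ gives $\|\S^{(3)}\|=\O(\max\{\tilde\sigma_*\sqrt{N^*},\sqrt{\log N^*}\})$ with high probability, which is subsumed by the right-hand side of~\eqref{eq: S Bound}.

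For $\S^{(4)}$ the entries in each column $j\in V_s\subset V-V_1$ are correlated through the column sum $\nu_j$. I will decouple by introducing the idealized matrix $\widetilde\S^{(4)}$ obtained by substituting $\E[\nu_j]=p_{1s}m_1$ for $\nu_j$ in the formulas of Case~4. The entries of $\widetilde\S^{(4)}$ are then independent, centered, bounded by $1$, and of variance at most $\tilde\sigma_*^2$, so a second application of Lemma~\ref{lem:BvH} yields $\|\widetilde\S^{(4)}\|=\O(\max\{\tilde\sigma_*\sqrt{N^*},\sqrt{\log N^*}\})$. The residual $\S^{(4)}-\widetilde\S^{(4)}$ is supported on the rows where $A_{ij}$ takes its ``dominant'' value in the given sub-case, and each nonzero entry equals an indicator times $f(\nu_j)-f(p_{1s}m_1)$ for the Lipschitz function $f(x)=-x/(m_1-x)$ (or $f(x)=1-m_1/x$ when $p_{1s}>1/2$) whose derivative is $\O(1/m_1)$ on the good event $|\nu_j-p_{1s}m_1|\le \tfrac12 m_1$. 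Lemma~\ref{lem4a} applied uniformly over all $j\in V-V_1$ then bounds $|\nu_j-p_{1s}m_1|$ by $\O(\sqrt{p_{1s}(1-p_{1s})m_1\log N^*}+\log N^*)$ w.h.p., which translates to each entry of $\S^{(4)}-\widetilde\S^{(4)}$ being $\O(\sqrt{\tilde\sigma_*^2\log N^*/m_1}+\log N^*/m_1)$ and hence, via the Frobenius-norm estimate over at most $m_1(N-n_1)$ nonzero entries, $\|\S^{(4)}-\widetilde\S^{(4)}\|=\O(\sqrt{\tilde\sigma_*^2 N^*\log N^*}+(\log N^*)^{3/2})$. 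The argument for $\S^{(5)}$ is symmetric, conditioning on the row sums $\mu_i$ in place of $\nu_j$.

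Summing the three bounds via the triangle inequality delivers~\eqref{eq: S Bound}. The main obstacle is the second piece: unlike in the proof of Lemma~\ref{lem: R bound}, the entries of $\S^{(4)}$ and $\S^{(5)}$ are genuinely dependent, so Lemma~\ref{lem:BvH} cannot be applied directly. The idealize-and-perturb decomposition above is the key technical device, and the need for Bernstein concentration of $\nu_j,\mu_i$ to hold uniformly over all $j\in V-V_1$ and $i\in U-U_1$ is precisely what forces the extra $\sqrt{\log N^*}$ factor inside the variance term and the $(\log N^*)^{3/2}$ correction appearing in~\eqref{eq: S Bound}.
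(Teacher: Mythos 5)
Your overall architecture is the same as the paper's: you split off the independent Case~3 block, and for the Case~4/5 blocks you compare $\boldsymbol{S}$ to an idealized matrix obtained by replacing the data-dependent value $-\nu_j/(m_1-\nu_j)$ with its ``population'' value $-p_{1s}/(1-p_{1s})$ — your $\widetilde{\boldsymbol{S}}^{(4)}$ is exactly the paper's $\boldsymbol{S}_1$ restricted to that block, and your treatment of $\boldsymbol{S}^{(3)}$ and of the idealized matrices via Lemma~\ref{lem:BvH} matches the paper. The divergence, and the gap, is in how you control the residual $\boldsymbol{S}^{(4)}-\widetilde{\boldsymbol{S}}^{(4)}$.

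Your Frobenius-norm estimate does not deliver the claimed bound. The residual has $\Theta(m_1(N-n_1))$ nonzero entries, each of size $\mathcal{O}\bigl(\sqrt{\tilde\sigma_*^2\log N^*/m_1}+\log N^*/m_1\bigr)$ on the good event, so squaring and summing gives $\|\boldsymbol{S}^{(4)}-\widetilde{\boldsymbol{S}}^{(4)}\|_F^2=\mathcal{O}\bigl(\tilde\sigma_*^2N^*\log N^*+N^*(\log N^*)^2/m_1\bigr)$, i.e.\ a spectral-norm bound of $\mathcal{O}\bigl(\sqrt{\tilde\sigma_*^2N^*\log N^*}+\sqrt{N^*/m_1}\,\log N^*\bigr)$. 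The second term is $\mathcal{O}((\log N^*)^{3/2})$ only if $N^*=\mathcal{O}(m_1\log N^*)$, which is not assumed: the hypotheses of \cref{thm:suff-cond-random} only guarantee $m_1\ge(\log N^*)^{3/2}$, in which case $\sqrt{N^*/m_1}\,\log N^*$ can be as large as $\sqrt{N^*}(\log N^*)^{1/4}$. This matters precisely in the sparse regime where the $(\log N^*)^{3/2}$ term of \eqref{eq: S Bound} is the binding one. The paper avoids this loss by invoking Lemma~\ref{lem4.4}, proved in Appendix~\ref{app:MBI} via the matrix Bernstein inequality applied to the column decomposition $\boldsymbol{Z}=\sum_j\boldsymbol{d}_j\boldsymbol{e}_j^T$: there the relevant variance proxy is $\nu(\boldsymbol{Z})=\max\{\|\mathbf{E}[\boldsymbol{Z}\boldsymbol{Z}^T]\|,\|\mathbf{E}[\boldsymbol{Z}^T\boldsymbol{Z}]\|\}=\mathcal{O}(\tilde\sigma_*^2N^*)$, computed from the \emph{expected} squared column norms $\mathbf{E}\|\boldsymbol{d}_j\|^2=\mathcal{O}(\tilde\sigma_*^2)$ (which carry no $\log$ factor, since $\mathbf{E}[(\nu_j-p_{1s}m_1)^2]=\Var(\nu_j)$), while the high-probability bound $L=\mathcal{O}(\sqrt{\log N^*})$ on $\|\boldsymbol{d}_j\|$ enters only through the lower-order term $L\log N^*=\mathcal{O}((\log N^*)^{3/2})$. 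You need this (or an equivalent operator-norm concentration argument exploiting the independence of the columns $\boldsymbol{d}_j$) to close the proof; the entrywise-plus-Frobenius route is genuinely too lossy.
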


\renewcommand{\S}{\boldsymbol{S}}
The rest of this section consists of a proof of Lemma~\ref{lem: S bound}.
To establish the desired bound on $\S$, we initially approximate $\S$ with a random matrix having mean-zero entries. Specifically, we define $\S_1$ as the random matrix constructed in the following manner:
\begin{itemize}
    \item 
        For each $i \in U_r$, $j \in V_s$ such that $r,s \ge 2$, we let 
        $$
            [\S_1]_{ij} = S_{ij} = \frac{W_{ij}}{\lambda} = 
            \begin{cases}                 
                1, & \text{if } (i,j) \notin \Omega, \\
                - \frac{p_{rs}}{1 - p_{rs}},  & \text{if } (i,j) \in \Omega.
            \end{cases}
        $$
    \item 
        For $i \in U_1$, $j \in V_s$, $s \ge 2$, we independently sample $[\S_1]_{ij}$ from the Bernoulli distribution such that 
        $$
            [\S_1]_{ij} = 
            \begin{cases}
                1, &\text{with probability $p_{1s}$}, \\ 
                -\frac{p_{1s}}{1 - p_{1s}}, & \text{with probability } 1 - p_{1s}.
            \end{cases}
        $$        
    \item
    For $i \in U_r$, $j \in V_1$, $r\ge 2$, we sample $[\S_1]_{ij}$ from the Bernoulli distribution such that 
    $$
        [\S_1]_{ij} = \psi_{r1} \times 
        \begin{cases}
            1, &\text{with probability $p_{r1}$}, \\ 
            -\frac{p_{r1}}{1 - p_{r1}}, & \text{with probability } 1 - p_{r1}.
        \end{cases}
    $$   
\end{itemize}
$\newcommand{\ts}{\tilde\sigma^2}$

\noindent Note that entries in the $(U_r, V_s)$-block are random variables with variance 
\[\tilde{\sigma}_{rs}^{2}:= \frac{p_{rs}}{1 - p_{rs}}\]
for all $(r,s) \neq (1,1).$
By construction, the entries of $\S_1$ are mean-zero independent random variables, each having variance bounded above by $\tilde\sigma^2$, 
where 
\[
\tilde\sigma^2 = \max_{\substack{(r,s) \neq (1,1)}} \left\{\tilde \sigma^2_{rs}\right\}
\]
Applying Lemma \ref{lem:BvH} with $B= \max\{1, \tilde\sigma^2 \}$ and $t= N^* = \max\{N,M\}$, shows that
\begin{equation}\label{eq: S1 bound}
    \lVert \S_1 \rVert = 
    \mathcal{O} \left( 
        \max \left\{ 
            \sqrt{\tilde\sigma^2 N^*}, 
            \sqrt{B \log N^*} 
            \right\} \right)
\end{equation}
with high probability.

The rest of the proof demonstrates that $\S$ is well approximated by $\S_1$. 
That is, we finish the proof by bounding $\lVert \S - \S_1 \rVert$. 
To do so, we consider the matrices $\S_2$ and $\S_3$ defined as follows:
\begin{align*}
   [\S_2]_{ij} &= \begin{cases}
        [\S - \S_1]_{ij},&\text{if } i \in U_1, j \in V_s, s \ge 2, \\ 
        0, &  \text{otherwise}.
    \end{cases} \\ 
   [\S_3]_{ij} &= \begin{cases}
        [\S - \S_1]_{ij},&\text{if } i \in U_r, j \in V_1, r \ge 2, \\ 
        0, &  \text{otherwise}.
    \end{cases}
\end{align*}


To bound the norm of $\S_2$ and the norm of $\S_3$, we will apply the following lemma, which offers bounds on the spectral norm of random matrices with this particular form. A proof of Lemma~\ref{lem4.4} is given in Appendix~\ref{app:MBI}.

\renewcommand{\A}{\boldsymbol{A}}
\newcommand{\T}{\boldsymbol{\Theta}}
\begin{lemma}\label{lem4.4}
Let $V_1, V_2, \dots, V_K$ be a partition of $[N]$.
Let $\T$ be a random $n\times N$ matrix such that
\begin{equation} \label{eq:theta-def}
\Theta_{ij} = 
\begin{cases}
    1, & \text{with probability } p_s \\ 
    -\frac{p_s}{1-p_s}, & \text{with probability } 1 - p_s
\end{cases}
\end{equation}
for all $i \in [n]$, $j \in V_s$ for probability $p_s \in [0,1]$.
Let $\tilde{\T}$ be the random matrix defined by
\begin{equation}\label{eq:tilde-theta-def}
    [\Tilde{\T}]_{ij} := 
    \begin{cases}
            1 & \text{if } \Theta_{ij} = 1\\
            \frac{-n_{j}}{n - n_{j}}, & \text{otherwise,}
    \end{cases}
\end{equation}
for each $i \in [n]$ and $j \in V_s$,
where $n_{j}$ is the number of $1$s in the $j$th column of $\T$. 
Then there is constant $c > 0$ such that       
\begin{align*}
    \prob
    \left( \lVert \T - \Tilde{\T} \rVert \ge 
        c \max \left\{ \sqrt{\hat \sigma^2 N \log N}, (\log N)^{3/2} \right\}
        \right) = \mathcal{O} \left(N^{-5}\right),
\end{align*}
where 
\begin{equation*}
    \hat\sigma^2 := 
    \max_s \left\{ \frac{p_s}{1-p_s} \right\}.
\end{equation*}
\end{lemma}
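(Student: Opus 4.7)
The central observation is that $\T - \tilde\T$ admits the rank-degenerate factorization
\[
    \T - \tilde\T \;=\; \bs{B}\,\Diag(\alpha_1,\dots,\alpha_N),
\]
where $\bs{B} \in \{0,1\}^{n\times N}$ is defined by $B_{ij} = 1$ if $\Theta_{ij} \neq 1$ and $B_{ij} = 0$ otherwise, and
\[
    \alpha_j \;=\; \frac{n_j}{n-n_j} - \frac{p_s}{1-p_s} \;=\; \frac{n_j - np_s}{(n-n_j)(1-p_s)}\qquad\text{for } j \in V_s.
\]
This holds because the $(i,j)$-entry of $\T - \tilde\T$ vanishes whenever $\Theta_{ij} = 1$ and otherwise equals $-p_s/(1-p_s) + n_j/(n-n_j) = \alpha_j$. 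Since $B_{ij}$ is Bernoulli with mean $1-p_s$ on the block $V_s$, submultiplicativity of the operator norm gives the working bound $\|\T - \tilde\T\| \le \|\bs{B}\| \cdot \max_j|\alpha_j|$, reducing the problem to controlling the two factors separately.

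I would then bound $\max_j|\alpha_j|$ by applying the Bernstein inequality of \cref{lem4a} to each column sum $n_j$ with $t = N^c$ for a suitable constant $c > 1$, followed by a union bound over $j \in [N]$. This yields $|n_j - np_s| = \mathcal{O}(\max\{\sqrt{p_s(1-p_s)n\log N},\, \log N\})$ uniformly in $j$ with failure probability $\mathcal{O}(N^{-5})$. Because $p_s \le 1/2$, the same event also guarantees $n - n_j \ge n(1-p_s)/2 \gtrsim n$ uniformly in $j$, so the denominator in the formula for $\alpha_j$ does not degenerate, provided $n$ exceeds a constant multiple of $\log N$. Substituting these estimates and using $p_s(1-p_s) \le \hat\sigma^2(1-p_s)^2 \le \hat\sigma^2$ (valid since $p_s \le 1/2$) gives
\[
    \max_j |\alpha_j| \;=\; \mathcal{O}\!\left(\max\!\left\{\sqrt{\hat\sigma^2 \log N / n},\ \log N / n\right\}\right)
\]
with high probability.

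For the second factor I decompose $\bs{B} = \mathbf{E}[\bs{B}] + (\bs{B} - \mathbf{E}[\bs{B}])$. The mean matrix is rank one: writing $q_j := 1 - p_{s(j)}$ yields $\mathbf{E}[\bs{B}] = \e\,\bs{q}^T$, and hence $\|\mathbf{E}[\bs{B}]\|^2 = n\sum_s |V_s|(1-p_s)^2 \le nN$. The centered matrix $\bs{B} - \mathbf{E}[\bs{B}]$ has independent, bounded, mean-zero entries with variance at most $\tfrac14$, so \cref{lem:BvH} (with $B = 1$ and $t = N^\ast$) gives $\|\bs{B} - \mathbf{E}[\bs{B}]\| = \mathcal{O}(\sqrt{n+N} + \sqrt{\log N^\ast})$, which is dominated by $\sqrt{nN}$. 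Therefore $\|\bs{B}\| = \mathcal{O}(\sqrt{nN})$ with high probability.

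Multiplying the two estimates via submultiplicativity and taking the union bound over the underlying high-probability events produces
\[
    \|\T - \tilde\T\| \;=\; \mathcal{O}\!\left(\sqrt{nN}\cdot\max\!\left\{\sqrt{\hat\sigma^2\log N/n},\ \log N/n\right\}\right) \;=\; \mathcal{O}\!\left(\max\!\left\{\sqrt{\hat\sigma^2 N\log N},\ \sqrt{N/n}\,\log N\right\}\right),
\]
which recovers the claimed bound once the regime hypothesis $n \gtrsim N/\log N$, implicit in the dimensional assumption used when \cref{lem4.4} is invoked within the proof of \cref{thm:suff-cond-random}, is used to absorb the lower-order term into $(\log N)^{3/2}$. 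The main obstacle is precisely that the product bound $\|\bs{B}\|\max_j|\alpha_j|$ can be lossy when $N$ is much larger than $n$; dispensing with the regime hypothesis would require a sharper analysis, for example applying matrix Bernstein directly to $\bs{B}\Diag(\alpha)$ after conditioning on the column sums in order to exploit the independence of the columns together with the conditional near-uniformity of the support patterns of $\bs{B}_{:,j}$.
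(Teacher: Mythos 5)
Your factorization $\T - \tilde\T = \bs{B}\,\Diag(\alpha_1,\dots,\alpha_N)$ and the two separate estimates are each correct, but the submultiplicative step $\|\T-\tilde\T\|\le\|\bs{B}\|\cdot\max_j|\alpha_j|$ loses too much. Because $\bs{B}$ has entries with mean $1-p_s\ge 1/2$, its spectral norm is genuinely of order $\sqrt{nN}$ (the rank-one mean already contributes this), so your product bound gives $\mathcal{O}\bigl(\max\bigl\{\sqrt{\hat\sigma^2 N\log N},\ \sqrt{N/n}\,\log N\bigr\}\bigr)$, which matches the lemma only when $n\ge N/\log N$. That regime hypothesis is not among the lemma's assumptions, and it is not guaranteed where the lemma is invoked in the proof of \cref{thm:suff-cond-random}: there $n$ plays the role of $n_1$ or $m_1$, and the size conditions only force $n_1\gtrsim(\log N)^{3/2}$ together with $m_1 n_1\gtrsim\tilde\sigma_*^2 N\log N$, which still permit $n_1\ll N/\log N$ when $\tilde\sigma_*^2$ is small. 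As written, your argument therefore establishes a strictly weaker statement than the one claimed.

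The paper's proof avoids this loss by writing $\boldsymbol{Z}=\T-\tilde\T=\sum_{j}\boldsymbol{S}_j$ with $\boldsymbol{S}_j=\boldsymbol{d}_j\boldsymbol{e}_j^T$, where $\boldsymbol{d}_j$ is the $j$th column of the difference, and applying the matrix Bernstein inequality directly to this sum of independent rank-one terms. The uniform bound $L=\max_j\|\boldsymbol{S}_j\|=\mathcal{O}\bigl(\sqrt{\max\{\hat\sigma^2,1\}\log N}\bigr)$ is essentially your $\max_j|\alpha_j|$ multiplied by $\sqrt{n-n_j}$, since only $n-n_j$ entries of $\boldsymbol{d}_j$ are nonzero, and the variance statistic $\nu(\boldsymbol{Z})\le\bar c\,\hat\sigma^2 N$ aggregates the columns' second moments rather than multiplying a worst-case column estimate by $\|\bs{B}\|$. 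The exponential tail then yields $\|\boldsymbol{Z}\|=\mathcal{O}\bigl(\max\bigl\{\sqrt{\hat\sigma^2 N\log N},(\log N)^{3/2}\bigr\}\bigr)$ with probability $1-\mathcal{O}(N^{-5})$ under only the mild requirement $\log N\lesssim n$. Your closing remark already points in the right direction: exploiting the column structure is exactly what removes the spurious regime hypothesis, so the fix is to replace the product bound with this sum decomposition and the matrix Bernstein inequality.
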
              

Note that $\S_2$ has the same form as $\T - \tilde\T$ in Lemma~\ref{lem4.4}. 
Indeed,
$S_{ij}$ is sampled according to the distribution defining $\T$ given in~\eqref{eq:theta-def} and $[\S_1]_{ij}$ is sampled according to~\eqref{eq:tilde-theta-def} with respect to $p_s = p_{1s}$ for each $j \in V_s$.
It follows immediately that 
\begin{align} 
    \|\S_2\| &=
        \O\left( 
            \max \left\{ \sqrt{\tilde \sigma^2 (N-n_1) \log (N-n_1)}, (\log (N-n_1))^{3/2} \right\}
            \right) \notag \\ 
        &= \label{eq: S2}
        \O \left(
            \max \left\{ \sqrt{\tilde \sigma^2 N^* \log N^*}, (\log N^*)^{3/2} \right\}
        \right).
\end{align}
with high probability.
By an identical calculation,
\begin{align} 
    \|\S_3\| &=
        \O\left( 
            \max \left\{ \sqrt{\tilde \sigma^2 (M-m_1) \log (M-m_1)}, (\log (M-m_1))^{3/2} \right\}
            \right) \notag \\ 
        &= \label{eq: S3}
        \O \left(
            \max \left\{ \sqrt{\tilde \sigma^2 N^* \log N^*}, (\log N^*)^{3/2} \right\}
        \right)
\end{align}
with high probability.
Combining \eqref{eq: S1 bound}, \eqref{eq: S2}, and~\eqref{eq: S3} shows that 
\begin{equation*}
\|\S\| = \O \left(
            \max \left\{ \sqrt{\tilde \sigma^2 N^* \log N^*},
                \sqrt{\max\{1, \tilde \sigma^2\} \log N^*}, 
                (\log N^*)^{3/2} \right\}
        \right)
\end{equation*}
with high probability.

\subsubsection{Completing the Bound on $\W$}

Without loss of generality, we assume $N = N^*$ and $m_1 \le n_1$for notational simplicity.
Applying the triangle inequality with the bounds given by Lemma~\ref{lem: R bound} and Lemma~\ref{lem: S bound} shows that 
\begin{align} 
    \|\W\| &\le \gamma \|\R\| + \lambda \|\S\| \notag \\ 
    &\le \gamma~c_R \left(
                \max \left\{ 
                    \sqrt{ \sigma_{11}^2 n_1^* \log N}, 
                    \sqrt{\frac{n_1^*}{n_{1*}}} \log N
                \right\}
            \right) \notag\\
        &\hspace{0.25in}+ \lambda~c_S \max \left\{
                            \sqrt{\tilde\sigma_*^2 N \log N},
                            (\log N)^{3/2}
                            \right\}    \label{eq: W1}
\end{align}
with high probability.
Recall that 
\begin{align*}
    \lambda &= \frac{1}{\sqrt{m_1 n_1}} + \gamma (1- p_{11}) + \gamma \tau  \\ 
    & = \frac{1}{\sqrt{m_1 n_1}} + 
        \O \left( \gamma
            \max \left\{ 
                (1- p_{11}), 
                \frac{\sigma_{11}^2 \log N}{n_{\min}},
                \frac{\log N}{n_{\min}}
                \right\}
            \right) \\ 
    & = \frac{1}{\sqrt{m_1 n_1}} + \O(\gamma)  = \O(\gamma)
\end{align*}
if we choose 
$$
    \gamma = \O \left(\frac{1}{(p_{11} - p^*)\sqrt{m_1 n_1}} \right)
$$
by our choice of $\tau$ and the assumption that $(\log N)^{3/2} = \O(n_{\min})$ in~\eqref{eq:rec-size2}.

It follows that $\|\W \| \le 1$ and~\eqref{eq:gap-3},~\eqref{eq:gap-case4}, and~\eqref{eq:gap-case5}, which ensure nonnegativity of $\bs{\Lambda}$ and $\bs{\Xi}$, are satisfied with high probability if we choose 
 $\gamma \le c_\gamma/((p_{11} - p^*)\sqrt{m_1 n_1})$ for sufficiently small $c_\gamma > 0$ 
and 
$$
    p_{11} - p_{rs} \ge c_1 \max 
    \left\{
        \sqrt{\frac{\tilde\sigma^2 N^* \log N^*}{m_1 n_1}}, 
        \sqrt{\frac{\sigma_{11}^2 \log N^*}{\min\{m_1,n_1\}}}, 
        \sqrt{\frac{\max\{\tilde\sigma^2, 1\} (\log N^*)^3}{\min\{m_1, n_1\}}}
    \right\}
$$
for all $(r,s) \neq (1,1).$
This completes the proof.

\section{Proof of Recovery in the Adversarial Case}
\label{sec:ADMM}

The proof the sufficient condition in the adversarial case follows a similar structure that of \cref{sec:random-proof}.
The proof relies on verifying that a particular choice of Lagrange multipliers satisfies the uniqueness and optimality conditions given in \cref{thm:KKT}. As in \cref{sec:random-proof}, we start with a choice of $\W$.

\begin{itemize}
    \item 
        \textbf{Case 1:} If $(i,j) \in {\bar U}_{1} \times {\bar V}_{1} - \Omega$, then~\eqref{8a} indicates that
        \begin{align*}
             \Xi_{ij} &= \gamma &
             W_{ij} & = \bar{\lambda} - \Lambda_{ij}.
        \end{align*}

    \item 
        \textbf{Case 2:} If $(i ,j) \in  \Omega \cap ({\bar U}_{1} \times {\bar V}_{1})  $, then
        \begin{align*}
             \Xi_{ij} &= 0 &
             W_{ij} & = \bar{\lambda} -\gamma - \Lambda_{ij}.
        \end{align*}
    \item 
        \textbf{Case 3:} Suppose that 
        $i \in U_q,$ $j \in V_s$ for some $q, s \ge 2$.
        Then we choose
        \begin{align*}
            W_{ij}  &=  
            \begin{cases}                 
                \lambda, & \text{if } (i,j) \notin \Omega, \\
                0,  & \text{if } (i,j) \in \Omega,
            \end{cases}
        & 
            \Xi_{ij} &=
            \begin{cases}
                \gamma, & \text{if } (i,j) \notin \Omega, \\ 
               \gamma - \lambda, & \text{if } (i,j) \in \Omega
            \end{cases}
        \end{align*}
        according to~\eqref{8a}.
        
    \item \textbf{Case 4:}
        Suppose that $i \in U_1$, $j \in V - V_1$.
        \begin{align*} 
            W_{ij}  &= 
            \begin{cases}
                \lambda, & \text{if } (i,j) \notin \Omega, \\
                -\lambda \left( \frac{\nu_{j}}{m_1-\nu_{j}} \right), & \text{if } (i,j) \in \Omega,
            \end{cases}
            &
            \Xi_{ij} &=
            \begin{cases} 
            \gamma, & \text{if } (i,j) \notin \Omega, \\
            \gamma - \lambda \left( \frac{m_1}{m_1-\nu_{j}}\right), & \text{if } (i,j) \in \Omega,
            \end{cases}
        \end{align*}
        where $m_1 := \lvert U_1 \rvert$ and $\nu_j$ denotes the number of nonzero entries of the $j$th column of $\boldsymbol{A}$ in rows indexed by $U_1$.
    \item \textbf{Case 5:}
        Similarly, if $i \in U - U_1$, $j \in V_1$, we choose
        \begin{align*} 
            W_{ij}  &= 
            \begin{cases}
                \lambda, & \text{if } (i,j) \notin \Omega, \\
                -\lambda \left( \frac{\mu_{i}}{n_1-\mu_{i}} \right), & \text{if } (i,j) \in \Omega,
            \end{cases}
            &
            \Xi_{ij} &= 
            \begin{cases} 
            \gamma, & \text{if } (i,j) \notin \Omega, \\
            \gamma - \lambda \left( \frac{n_1}{n_1-\mu_{i}}\right), & \text{if } (i,j) \in \Omega.
            \end{cases}
        \end{align*} 
\end{itemize}

\subsection{Nonnegativity of $\BL$ in the Adversarial Case}
\label{sec:adv-nonneg-Lambda}

As in \cref{sec:BL-choice}, we choose 
$$
\Lambda_{ij} = y_i + z_j
                 = \bar{\lambda} - \gamma \bigg ( \frac{\bar{\mu}_{i}}{n_1} +  \frac{\bar{\nu}_{j}}{m_1} -  \frac{ \bar r_{11}}{m_1n_1}\bigg)\\
$$
for all $(i,j) \in \bar{U}_1 \times \bar{V}_1$
so that $\W^T\bu = \0$ and $\W \bv = \0$, where $r_{11}$ denotes the number of $0$ entries in the $(\bar{U}_1, \bar{V}_1)$-block of $\A$.

By assumption, we have 
$\bar\mu_i \ge (1 - \tilde{\delta})n_1$ and $\bar\nu_j \ge (1 - \tilde{\delta})m_1$ for all $(i,j)$.
It follows that 
\begin{align*}
    \Lambda_{ij} &\ge \bar{\lambda} - 2(1 - \tilde{\delta} )\gamma \ge 0 
\end{align*}
if 
\begin{equation}\label{eq:adv-lam-bound}
\lambda \ge \frac{1}{m_1 n_1} + 2(1 - \tilde{\delta} )\gamma.
\end{equation}

\subsection{Nonnegativity of $\BX$ in the Adversarial Case}
\label{sec:adv-nonneg-xi}

As in \cref{sec:nonneg-Xi}, we need only consider Cases 3,4, and 5 in the construction of $\BX$.

We begin with Case 3. Suppose that $(i,j) \in \Omega \cap U_q \times V_s$. In this case, we have 
$$
    \Xi_{ij} = \gamma - \lambda \ge 0
$$
if $\gamma \ge \lambda$. Enforcing \eqref{eq:adv-lam-bound} shows that it suffices to choose $\gamma$ satisfying
\begin{equation} \label{eq:adv-xi-3}
    (2 \tilde \delta - 1) \gamma \ge \frac{1}{\sqrt{m_1 n_1}}.
\end{equation}
Note that \eqref{eq:adv-xi-3} is only satisfiable if $\tilde\delta > 1/2$.

We next consider Case 4, i.e., when $i \in U_1$, $j \in V- V_1$ and $(i,j) \in \Omega$.
In this case,
$$
\Xi_{ij} = \gamma - \lambda \left( \frac{m_1}{m_1-\nu_{j}}\right) \ge \gamma -  \frac{\lambda}{1 - \delta}
$$
by the assumption that $\nu_j \le \delta m_1$. Again, enforcing \eqref{eq:adv-lam-bound} shows that $\Xi_{ij}$ in this case if 
\begin{equation}
    \label{eq:adv-xi-4}
    (2 \tilde \delta - \delta - 1) \gamma \ge \frac{1}{\sqrt{m_1 n_1}}.
\end{equation}
Finally, a symmetric argument shows that $\Xi_{ij} \ge 0$ if \eqref{eq:adv-xi-4} is satisfied when $i\in U - U_1$, $j\in V_1$, and $(i,j)\in\Omega$.
Choosing $\gamma$ according to \eqref{eq:adv-gamma-def} ensures that \eqref{eq:adv-xi-3} and \eqref{eq:adv-xi-4} are satisfied.
Moreover, we choose
$$
    \lambda = \frac{1}{m_1 n_1} + \frac{2(1-\td)}{2 \td - \delta - 1} \cdot \frac{1}{\sqrt{m_1 n_1}} 
    :=  \frac{1}{m_1 n_1} + \frac{\tilde c (1 -\td)}{\sqrt{m_1 n_1}},
$$
where $\tilde c := 2/(2\td - \delta - 1)$, which satisfies~\eqref{eq:adv-lam-bound}.

\subsection{A Bound on $\W$ in the Adversarial Case}
\label{sec:adv-W}

It remains to establish conditions when $\|\W\| \le 1$ under the assumptions given in \cref{sec:suff-cond-det}.
To do so, we decompose $\W$ as $\pmb{W} = \pmb{W}_{1} + \pmb{W}_{2} + \pmb{W}_{3}$, where
\begin{align*}
        \pmb{W}_{1}[U_q, V_s] &= \begin{cases}
                \W[U_q, V_q], & \text{if } ~q=s, \\
                0, & o/w,
            \end{cases} 
            \\ 
        \pmb{W}_{2}[U_q, V_s] &= \begin{cases}
                \W[U_1, V_s], & \text{if } ~q=1, s\ge 2 \\
                \W[U_q, V_1], & \text{if } ~q\ge 2, s=1 \\
                0, & o/w,
            \end{cases}
            \\ 
        \pmb{W}_{3}[U_q, V_s] &= \begin{cases}
                \W[U_q, V_s], & \text{if } ~q,s\ge 2, q\ne s \\
                0, & o/w,
            \end{cases}
\end{align*}

We will bound each of $\|\bs{W_1}\|, \|\bs{W_2}\|,$ and $\|\bs{W_3}\|$ individually and use the triangle inequality to bound $\|\W\|$. We start with $\|\bs{W_2}\|$ and the following initial bound:
\begin{align*}
    \| \pmb{W}_2 \|^{2} \le \| \pmb{W}_2 \|^{2}_{F} = \sum^{k}_{s=2}\left(  \|\pmb{W}[U_{1}, V_{s}]\|^{2}_{F} + \|\pmb{W}[U_{s},V_{1}]\|^{2}_{F} \right).
\end{align*}
The first summand is bounded above by
\begin{align*}
  \sum^{k}_{s=2} \|\pmb{W}[U_1, V_s]\|^{2}_{F} &= \sum^{k}_{i\in U_1} \sum^{k}_{j\in V- V_1}W_{ij}^{2}\\
  &=  \lambda^{2} \sum^{k}_{j\in V- V_1}\bigg ( v_{j} + (m_1 -v_j)\frac{v_j ^{2}}{(m_1 - v_j)^2}\bigg)\\
  &=  \lambda^{2} \sum^{k}_{j\in V- V_1} v_{j} \bigg ( 1 + \frac{v_j}{m_1 - v_j}\bigg)\\
  &\le \lambda^{2} \sum^{k}_{j\in V- V_1} \frac{v_j}{1- \delta} \le \lambda^{2} \frac{{r}_1}{1- \delta}.
\end{align*}
By an identical argument, we have
\begin{align*}
    \sum^{k}_{s=2} \|\pmb{W}[U_{s},V_{1}]\|^{2}_{F}\le \lambda^{2} \frac{{r}_2}{1- \delta}.
\end{align*}
Combining these two bounds, we have 
\begin{equation}
    \label{eq:adv-W2}
    \|\bs{W_2}\|^2 \le \frac{\lambda^2}{1- \delta}\left( r_1 +  r_2 \right).
\end{equation}
We bound $\bs{W_3}$ using a similar argument as
\begin{equation}\label{eq:adv-W3}
    \| \pmb{W}_3 \|^{2} \le \| \pmb{W}_3 \|^{2}_{F} \leq\lambda^2 {r}_{3}.
\end{equation}
It remains to bound $\bs{W_1}$.

To do so, we note that $\bs{W_1}$ is block diagonal and, therefore, 
\begin{align*}
    \| \pmb{W}_1 \| = \max_{q=1,\dots, k} \|\pmb{W}[U_q, V_q]\|. 
\end{align*}
For $q\ge 2$, there are at most $r_{qq}$ entries of $\pmb{W}[U_q, V_q]$ equal to $\lambda$, while the remaining entries are zeros. Thus,
\begin{equation}
    \label{eq:adv-wqq}
        \|\pmb{W}[U_q, V_q]\|^{2} \le \|\pmb{W}[U_q, V_q]\|^{2}_{F}
    \le \lambda^{2}r_{qq}.
\end{equation}
On the other hand,
\begin{align*}
    \pmb{W}(u_{1},v_{1}) = \bar{\lambda}\1\1 - \1\y^{T} +\1\z^{T} - \gamma \bar{\A}(u_{1},v_{1}).
\end{align*}
We decompose $\bs{W_1}$ further as $\bs{W_1}[U_1, V_1] = \bs{\tilde W_1} + \bs{\tilde W_2}$, where 
\begin{align*}
\tilde{\pmb{W}}_{1} &= \gamma \left( \frac{\bar{\pmb{\mu}}_1\1^{T}}{n_1} + \frac{\1\bar{\pmb{\nu}}_1^{T}}{m_1} \right),\\
    \tilde
    {\pmb{W}}_{2}&= \bar\lambda \1\1^T + \gamma \bigg (\frac{\bar{r}_{11}}{m_1 n_1}\1\1^{T}  - \bar{\A}(u_{1},v_{1})\bigg)    .
\end{align*}

\subsubsection{A Bound on $\bs{\tilde W_1}$}
\label{sec:adv-w11}

We want to bound the maximum eigenvalue of $\bs{\tilde \W_1}^T \bs{\tilde\W_1}$ to bound the spectral norm of $\bs{\tilde W_1}$.
Taking the product $\bs{\tilde \W_1}^T \bs{\tilde\W_1}$ we have 
\begin{align*}
    \bs{\tilde \W_1}^T \bs{\tilde\W_1} &= \frac{ \|\bar{\bs{ \mu_1}} \|^2 }{ n_1^2 } \1 \1^T 
    + \frac{\bs{\mu_1}^T \1}{m_1 n_1}  \left( \bar{\bs{\nu_1}} \1^T + \1 \bar{\bs{\nu_1}}^T \right) 
    + \frac{1}{m_1} \bar{\bs{\nu_1}} \bar{\bs{\nu_1}}^T.
\end{align*}

We will bound each term separately.
First, note that 
$$
\bar{\pmb{\mu}}_1^{T}\bar{\pmb{\mu}}_{1} = \sum_{i\in U_1}\mu_{i}^{2} \le \|\bar{\pmb{\mu}}_1\|_{1}\|\bar{\pmb{\mu}}_1\|_{\infty} = \bar{r}_{11} (1- \tilde{\delta})n_1,
$$
which implies that the first summand has maximum eigenvalue at most 
\begin{equation} \label{eq:adv-w11-s1}
\bar{r}_{11} (1- \tilde{\delta})
\end{equation}
since $\|\1 \1^T \| = n_1.$
By a similar argument, we have 
\begin{equation}\label{eq:adv-w11-s3}
\left\| \frac{1}{m_1} \bar{\bs{\nu_1}} \bar{\bs{\nu_1}}^T \right\| 
    \le \bar{r}_{11} (1- \tilde{\delta}). 
\end{equation}
It remains to bound the remaining term in the decomposition.

Note that $\bar{\bs{\nu_1}} \1^T + \1 \bar{\bs{\nu_1}}^T$ has rank at most 2, with nontrivial eigenvalues
$$
 \bar{\pmb{\nu}}_{1}^T\1 \pm \|\bar{\pmb{\nu}}_{1}\|_{2} \|\1\|_{2} =     \bar{\pmb{\nu}}_{1}\1^{T} \pm \|\bar{\pmb{\nu}}_{1}\|_{2}\sqrt{n_1}. 
$$
We know that 
\begin{align*}
    \bar{\pmb{\nu}}_{1}^T\1 &\le (1- \tilde{\delta})n_1\\ \|\bar{\pmb{\nu}}_{1}\|_{2}^{2} &\le \bar{r}_{11} (1- \tilde{\delta})n_1.
\end{align*}
Therefore, 
$$
\|\bar{\pmb{\nu}}_{1}\1^{T} + \1\bar{\pmb{\nu}}_{1}^{T} \|_{2} 
    \le (1- \tilde{\delta})n_1 + \sqrt{\bar{r}_{11} (1- \tilde{\delta})n_1}
    \le 2 \bar{r}_{11},
$$
since $(1-\tilde \delta)n_1 \le \bar r_{11}$.
This implies that 
\begin{equation} \label{eq:adv-w11-s2}
\left\|\frac{\bar{\pmb{\mu}}_1^{T}\1}{m_1 n_1}\bigg(\1\bar{\pmb{\nu}}_1^{T} + \bar{\pmb{\nu}}_1\1^{T} \bigg) \right\|
    \le \frac{(1-\tilde \delta) m_1}{m_1} \|\bar{\pmb{\nu}}_{1}\1^{T} + \1\bar{\pmb{\nu}}_{1}^{T} \|_{2} 
    \le 4 (1 - \tilde \delta) \bar r_{11}.
\end{equation}
Combining \eqref{eq:adv-w11-s1}, \eqref{eq:adv-w11-s3}, and \eqref{eq:adv-w11-s2} and applying the triangle inequality shows that 
\begin{equation}\label{eq:adv-w11-all}
\| \bs{\tilde W_1} \| \le 2 \sqrt{ ( 1- \tilde \delta) \bar{r}_{11}}.
\end{equation}

\subsubsection{A Bound on $\bs{\tilde W_2}$}
\label{sec:adv-w22}

By construction, 
$$
    [\bs{\tilde W_{2}}]_{ij} = 
    \begin{cases} 
        2 (1- \td) + \frac{\bar{r}_{11}}{m_1 n_1}, & \text{if } (i,j) \notin \Omega, \\ 
        2(1-\td) + \frac{r_{11}}{m_1 n_1}, & \text{if } (i,j) \in \Omega. 
    \end{cases}
$$
Here, we have chosen 
$$
\lambda = \frac{1}{m_1 n_1} + 2 ( 1 -\td) \gamma. 
$$
Recall that the set $\Omega \cap U_1 \times V_1$ has cardinality $\bar{r}_{11} = m_1 n_1 - r_{11}.$ It follows that 
\begin{align}
\|\bs{\tilde W_2} \|^2 \le \|\bs{\tilde W_2} \|_F^2 
    &= r_{11} \left(  2 (1- \td) + \frac{\bar{r}_{11}}{m_1 n_1} \right)  
            + (m_1 n_1 - r_{11}) \left(  2(1-\td) + \frac{r_{11}}{m_1 n_1} \right)\notag  \\ 
    &= 4(1 - \td)^2 m_1 n_1 + \frac{r_{11}  (m_1 n_1 - r_{11})}{m_1 n_1} \notag\\  
    &\le 4(1 - \td)^2 m_1 n_1  + \bar{r}_{11}. \label{eq:adv-w22-all}
\end{align}

\subsubsection{Finishing the Bound on $\W$}
\label{sec:bound-on-W}

We can obtain the desired bound on $\|\W\|$ by applying the triangle inequality and combining the partial bounds given by~\eqref{eq:adv-W2}, \eqref{eq:adv-W3}, \eqref{eq:adv-w11-all}, and~\eqref{eq:adv-w22-all}:
\begin{align}
    \|\W\| &\le \|\bs{W_1}\| + 
        \|\bs{W_2}\| + \|\bs{W_3}\|  \notag \\ 
    & \le \max\left\{ 
            \max_{q\ge 2} \left\{\lambda \sqrt{r_{qq}} \right\}, 
            \gamma \left( 
                2 \sqrt{(1-\td) \bar r_{11}} 
                    + \sqrt{4(1-\td)^2 m_1 n_1 + \bar{r}_{11}  } 
            \right) 
            \right\}\notag \\ 
    & \hspace{0.4in} + 
    \frac{\lambda}{\sqrt{1-\delta}} \sqrt{r_1 + r_2} + \lambda \sqrt{r_3}   \label{eq:adv-W-intermediate}
\end{align}
Note that 
$$
\|\bs{W_2} \| + \|\bs{W_3}\| = \left(\frac{1}{m_1 n_1} + \frac{\tilde c (1 -\td)}{\sqrt{m_1 n_1}} \right)
    \left( \sqrt{\frac{r_1 + r_2}{1-\delta}} + \sqrt{r_3} \right)
< \frac{1}{2}
$$
if $\max\{r_1, r_2, r_3\} < c m_1 n_1$ for sufficiently small constant $c$.
Similarly, note that 
$$
    \lambda \sqrt{r_{qq}} < \frac{1}{2}
$$
if $r_{qq} < c m_1 n_1$ for sufficiently small $c$ for all $q = 2, 3, \dots, k$. 

Finally, 
\begin{align*}
     \|\bs{\tilde{W}_1} \| + \|\bs{\tilde W_2}\| &
        \le \gamma \left(1 + 2 \sqrt{1 - \td} \right) \sqrt{\bar r_{11}} 
            + 2(1-\td) \sqrt{m_1 n_1} \\
        &= \frac{1 + 2 \sqrt{1- \td }}{2\td - \delta - 1} \sqrt{\frac{\bar r_{11}}{m_1 n_1} } 
            + \frac{2 (1-\td)}{2 \td - \delta - 1} 
         < \frac{1}{2} 
\end{align*}
if $\bar{r}_{11} < c m_1 n_1$ for sufficiently small $c$ and $\td$ sufficiently large.
Putting everything together shows that $\|\W\| < 1$ if~\eqref{eq:adv-size-cond} holds with sufficiently small $c$.
\section{Numerical Experiments}
\label{sec:ADMM}

\subsection{An Algorithm Based on the Alternating Direction Method of Multipliers}

The Alternating Direction Method of Multipliers (ADMM) is a powerful optimization technique particularly well-suited for large-scale and constrained problems. In this section, we present an overview of how the ADMM is applied to solve our relaxation of the densest submatrix~\eqref{eqn3.2}, highlighting its decomposition capabilities and iterative structure that enable efficient computation. Comprehensive details on the ADMM can be found in \cite{boyd2011distributed}. The methodology is first derived analytically, followed by a rigorous empirical evaluation using both synthetically generated matrices and real-world datasets representing collaboration and communication networks.

The {ADMM} algorithm is used to solve optimization problems of the form:
\begin{equation}\label{eq:gen-ADMM-prob}
\begin{array}{cl}
     \min & f(\x) + g(\y)\\
     \suchthat &  \bs A \bs x + \bs By = \bs C.
\end{array}    
\end{equation}
This can be generalized to multi-block problems of the form
\begin{equation}\label{eq:multi-block-ADMM}
\begin{array}{cl}
\min & \sum_{i=1}^k f(\x_i) \\ 
\suchthat & \sum_{i=1}^k \A_i \x_i = \bs{C}
\end{array}
\end{equation}
That is, the ADMM is used to solve optimization problems with separable objective function subject to linear coupling constraints.

We apply a multi-block ADMM to solve~\eqref{eqn3.2} as proposed in~\cite{bombina2020convex}.
We can rewrite the convex program given in~\eqref{eqn3.2}
to fit the framework~\eqref{eq:multi-block-ADMM} by introducing auxiliary variables $\Q, \W, \Z$:
\begin{equation} \label{eqn:lag 1}
\begin{array}{cl}
     \min &\lVert \X \rVert_{*} +  \gamma \lVert \Y \rVert_{1} + \mathbb{I}_{\Omega_{Q}}(\boldsymbol{Q}) + \mathbb{I}_{\Omega_{W}}(\W) + \mathbb{I}_{\Omega_{Z}}(\Z) \\
     \suchthat &\X + \Y = \boldsymbol{Q},  ~\X = \W, ~\X = \Z,
\end{array}
\end{equation}
where \(\Omega_Q\), \(\Omega_W\), and \(\Omega_Z\) denote the sets
\begin{align*}
\Omega_Q &:= \{\boldsymbol{Q} \in \mathbb{R}^{M \times N} : P_{\Omega}(\Q) = \0\}, \\
\quad \Omega_W &:= \{\W \in \mathbb{R}^{M \times N}  : \e^T \W\e = m_{1}n_{1}\},\\
\Omega_Z &:= \{\Z \in \mathbb{R}^{M \times N}  : 0 \le Z_{ij} \leq 1 \, \forall (i,j)\}.
\end{align*}
Here, \(\mathbb{I}_{\mathcal{S}} : \mathbb{R}^{M \times N} \to \{0, +\infty\}\) represents the indicator function of the set \(\mathcal{S} \subseteq \mathbb{R}^{M \times N}\), where \(\mathbb{I}_{\mathcal{S}}(\X) = 0\) if \(\X \in \mathcal{S}\), and \(+\infty\) otherwise. 

To solve \eqref{eqn:lag 1}, we employ an iterative approach based on the ADMM.
In each iteration, we sequentially update each primal decision variable by minimizing the augmented Lagrangian to approximate the dual functional gradient and then apply an approximate dual gradient ascent step.

We define the augmented Lagrangian \( \mathcal{L}_\tau \) as follows:
\begin{align*}
\mathcal{L}_\tau &= \|\X\|_{*} + \gamma \|\Y\|_{1} + \mathbb{I}_{\Omega_Q}(\boldsymbol{Q}) + \mathbb{I}_{\Omega_W}(\W) + \mathbb{I}_{\Omega_Z}(\Z) \\
&\quad + \mathrm{Tr}(\bs\Lambda_Q^\top (\X + \Y - \boldsymbol{Q})) + \mathrm{Tr}(\bs\Lambda_W^\top (\X - \W)) + \mathrm{Tr}(\bs\Lambda_Z^\top (\X - \Z)) \\
&\quad + \frac{\tau}{2} \left( \|\X+ \Y - \boldsymbol{Q}\|_F^2 + \|\X - \W\|_F^2 + \|\X - \Z\|_F^2 \right),
\end{align*}
where \( \tau \) is a regularization parameter selected to ensure that \( \mathcal{L}_\tau \) is strongly convex in each of the primal variables.

In each iteration, the primal variables are updated sequentially using a Gauss–Seidel strategy to minimize \( \mathcal{L}_\tau \) with respect to each of $\X$, $\Y$, $\bs Q$, $\W$, and $\Z$, followed by updates to the dual variables \(\bs \Lambda_Q \), \(\bs \Lambda_W \), and \(\bs \Lambda_Z \). The updates of the auxiliary variables \( \boldsymbol{Q} \), \( \W \), and \( \Z \) reduce to orthogonal projections onto the constraint sets \( \Omega_Q \), \( \Omega_W \), and \( \Omega_Z \), respectively; each of these projections can be computed in closed form.

The subproblems for updating \( \X \) and \( \Y \) admit closed-form solutions via the elementwise soft-thresholding operator \( S_\phi : \mathbb{R}^n \to \mathbb{R}^n \) defined as
\[
(S_\phi(\x))_i = 
\begin{cases}
x_i - \phi, & \text{if } x_i > \phi, \\
0, & \text{if } |x_i| \leq \phi, \\
x_i + \phi, & \text{if } x_i < -\phi.
\end{cases}
\]
The algorithm is terminated once the primal and dual residuals,
$\|\X^{(t+1)} - \W^{(t+1)}\|_F$, $\|\X^{(t+1)} - \Z^{(t+1)}\|_F$, $\|\W^{(t+1)} - \W^{(t)}\|_F$, $\|\boldsymbol{Q}^{(t+1)} - \boldsymbol{Q}^{(t)}\|_F$, $\|\Z^{(t+1)} - \Z^{(t)}\|_F$, $\|\boldsymbol{\Lambda}_Q^{(t+1)} - \boldsymbol{\Lambda}_Q^{(t)}\|_F$,$ \|\boldsymbol{\Lambda}_W^{(t+1)} - \boldsymbol{\Lambda}_Z^{(t)}\|_F$ and \(\|\boldsymbol{\Lambda}_Z^{(t+1)} - \boldsymbol{\Lambda}_Q^{(t)}\|_F\), are all within desired error bounds. 
The algorithmic steps are summarized in Algorithm~\ref{alg:densub}.

\begin{remark}[Convergence of Alg.~\ref{alg:densub}]
It has been established that ADMM exhibits linear convergence when applied to the minimization of convex separable functions under mild regularity assumptions. 
For example, Problem~\eqref{eqn:lag 1} satisfies the hypothesis of Theorem~3.1 in~\cite{hong2017linear}, which establishes that the Algorithm~\ref{alg:densub} is expected to converge linearly to a global optimum of Problem (\ref{eqn3.2}). 
\end{remark}

\begin{algorithm}[t]
\begin{algorithmic}
\caption{DENSUB: ADMM for Densest Submatrix/Subgraph Recovery}
\label{alg:densub}
\State 
{\bf Input:} adjacency matrix $\A \in \mathbb{R}^{M \times N}$, Target submatrix size $(m, n)$, Regularization parameter $\gamma$, Augmented Lagrangian penalty $\tau$,
    Stopping tolerance $\epsilon$, Maximum iterations $\texttt{maxiter}$

\State
{\bf Output:} Primal variables $\X, \Y, \Q$, number of iterations $\texttt{iter}$.

\State{\bf Initialization:}
Initialize $\W = \X = \Y = \Z = (mn)/(MN) \cdot \1 \1^T$.
Set dual variables $\bs\Lambda_Q = \bs\Lambda_Z = \bs\Lambda_W = \bs0$.
Set $\texttt{iter} = 0$; $\texttt{convergence} = \texttt{false}$.

\While{\texttt{convergence} == \texttt{false}}
    \State
    Increment iteration counter: $\texttt{iter} \leftarrow \texttt{iter} + 1$. 

    \State 
    {\bf Update $\Q$:} 
        \State 
        $\Q \leftarrow (\X - \Y + (1/\tau) \bs \Lambda_Q)$
        \State 
        Elementwise mask: $\Q \leftarrow \Q \circ \A$ (Hadamard product)

    \State {\bf Update $\X$:}
        $\X \leftarrow \texttt{MatrixShrink} 
            \left( 
                \frac{1}{3} \left(
                    \Y + \Q + \Z + \W - \frac{1}{\tau}(
                        \bs\Lambda_Q + \bs \Lambda_W + \bs \Lambda_Z)
                    \right), 
                    \frac{1}{3 \tau}                 
            \right)$

    \State {\bf Update $\Y$:}
    $\Y \leftarrow \max \left( \X - \Q - \gamma \frac{1}{\tau} \e\e^T + \frac{1}{\tau} \bs \Lambda_Q, \0 \right)$

    \State{\bf Update $\W$:}
    $\W \leftarrow \X + \mu \bs \Lambda_W$ then
    shift to enforce $\sum W_{ij} = mn$ by adding constant offset. 

    \State{\bf Update $\Z$:}
    $\Z \leftarrow \texttt{clip}(\X + \mu \bs\Lambda_Z, \0, \e\e^T)$ (clip between 0 and 1)    

    \State{\bf Update dual variables:}
        \State $\bs\Lambda_Q \leftarrow \bs\Lambda_Q + \tau (\X - \Y - \Q)$
        \State $\bs\Lambda_W \leftarrow \bs\Lambda_W + \tau (\X - \W)$  
        \State $\bs\Lambda_Z \leftarrow \bs\Lambda_Z + \tau (\X - \Z)$

    \State{\bf Check for convergence:}
        \State 
        Primal residual: $\epsilon_P = \max \left\{ \|\X-\Z\|_F, \|\X-\W\|_F, \|\X-\Y-\Q\|_F \right\} / \|\X\|_F$
        \State 
        Dual residual: 
        $\epsilon_D = \max \left\{ \|\Z-\Z_{\text{old}}\|_F, \|\W-\W_{\text{old}}\|_F, \|\Q-\Q_{\text{old}}\|_F \right\} / \|\X\|_F$
        \If{($\epsilon_P < \epsilon$ and $\epsilon_D < \epsilon$) or $\texttt{iter} \geq \texttt{maxiter}$}
            \State Set $\texttt{convergence} = \texttt{true}$
        \EndIf
        
\EndWhile
    
\end{algorithmic}
\end{algorithm}

\begin{remark}[Computational Cost of Alg.~\ref{alg:densub}]
We need $\O(MN)$ floating point operations (flops) to update all primal and dual variables except $\X$ during each iteration of Alg.~\ref{alg:densub}. Indeed, each of these variable updates reduces to entry-wise operations on $M\times N$ real matrices. Updating $\X$ is the most computationally expensive step of each iteration and requires the calculation of the singular value decomposition of a dense matrix. This requires $\O(\min\{M,N\} MN)$ flops per iteration. Thus, each iteration of Alg.~\ref{alg:densub} uses $\O(\min\{M,N\} MN)$ flops.    
\end{remark}

\subsection{Analysis of Synthetic Data}

We perform two experiments using matrices sampled from the planted submatrix model
to illustrate the theoretical sufficient conditions for perfect recovery of the densest submatrix given by Theorem~\ref{thm:suff-cond-random}. In each, we consider random matrices sampled from the planted submatrix model while varying problem parameters such as the density and size of the planted blocks, and record the number of times that we correctly identify a particular submatrix from the solution of~\eqref{eqn3.2}. This allows us to empirically model the probability of recovery of the hidden submatrix for each problem setting and, hence, illustrate the phase transitions to perfect recovery given in Theorem~\ref{thm:suff-cond-random} and Theorem~\ref{thm:suff-cond-adv}.

We have prepared Python\footnote{\href{https://github.com/bpames/DENSUB}{https://github.com/bpames/DENSUB}}, MATLAB\footnote{\href{https://github.com/pbombina/admmDSM}{https://github.com/pbombina/admmDSM}}, and R\footnote{\href{https://cran.r-project.org/web/packages/admmDensestSubmatrix/index.html}{https://cran.r-project.org/web/packages/admmDensestSubmatrix/index.html}} implementations of Alg.~\ref{alg:densub}. We use the Python implementation in the following experiments. All experiments were completed using a single node of the University of Southampton's Iridis 5 HPC cluster.

\subsubsection{Experiment 1}
\label{sec:expt-1}

We first consider $500\times 500$ matrices sampled from the planted dense submatrix model with $k = \lfloor M/m \rfloor$ blocks constructed as follows for each $(q,m)$ pairs with 
\begin{align*}
q &\in \{0.30, 0.35, 0.4, 0.45, 0.5, 0.55, 
               0.6, 0.65, 0.7, 0.75, 0.85, 0.9, 
               0.95, 1\}, \\
    m &\in \{50, 75, 100, 125, 150, 175, 200, 225, 250\}:
\end{align*}

\begin{itemize}
    \item 
    We partition the rows into $k-1$ sets $U_1, U_2, \dots, U_{k-1}$ of size $m$. The remaining $M - (k-1)m$ rows are assigned to the remaining block $U_k$. We partition the columns symmetrically.
    \item 
    We sample  $\A$ from the symmetric planted dense submatrix model according to this partition with probabilities
    \begin{align*} 
        p_{11} &= q,  \\
        p_{jj} &= \frac{1}{4(j - 1)} \;\;\text{for } j =2, 3, \dots, k,\\ 
        p_{ij} &= p_{ii} p_{jj} \;\; \text{if } i \neq j.
    \end{align*}
    Here, we sample $\A$ symmetrically so that $a_{ij} = a_{ji}$ for all $i, j \in [M]=[500]$.
\end{itemize}

This construction ensures that the $(U_1, U_1)$ block is the densest $m \times m$-submatrix in expectation. Moreover, Conditions~\eqref{eq:rec-size} and~\eqref{eq:rec-size2} hold by our choice of $m$ and $M$. 
If the gap $q - p_{22}$ is sufficiently large then~\eqref{eq:rec-gap} is satisfied and we should expect that the matrix representation of $(U_1,U_1)$ is the unique optimal solution for~\eqref{eqn3.2}. When this gap is small, i.e., $q$ is close to $1/4$, then the sufficient conditions given by Theorem~\ref{thm:suff-cond-random} are not satisfied and we may not have recovery.

This is illustrated by the examples given in Figure~\ref{fig:E1-Examples}. Each of the matrices $\A$ have a densest block indexed by $(U_1,U_1) = ([m],[m])$
for $m=100$. The remaining four diagonal blocks are less dense and the off-diagonal blocks are relatively sparse. We can create an undirected graph $G$ with adjacency matrix equal to each sampled matrix $\A$. These graphs have densest $m$-subgraphs indexed by $U_1$ in expectation.
When the gap between $q$ and $p_{22}$, the density of the next densest diagonal block, is small, we have two planted $(m,m)$-submatrices with very similar densities. We cannot expect to recover the densest $m$-subgraph or densest $m \times m$-submatrix in this case (see Figs.~\ref{fig:E1-hard-A} and~\ref{fig:E1-hard-G}). However, when the gap is larger, the corresponding graph $G$ consists of a single dense $m$-subgraph obscured by sparse noise in the form of diversionary edges and nodes. In this case, we should expect to identify the planted dense submatrix and subgraph from the solution of~\eqref{eqn3.2} (see Figs.~\ref{fig:E1-A} and Figs.~\ref{fig:E1-G}).

To empirically verify this predicted behaviour,
we generate $10$ problem instances $\A$ corresponding to each $(q,m)$-pair. For each of these matrices, we solve~\eqref{eqn3.2} with 
$$
\gamma=\frac{6}{m(q-\max\{p_{ij}\})}
$$
using the Python implementation of Alg.~\ref{alg:densub}.
We use the augmented Lagrangian parameter $\mu=2$, stopping tolerance $10^{-4}$, and maximum number of iterations $2000$ in each call to the solver.
We round the entries of $\X$ to the nearest integer and compare to the matrix representation of the planted submatrix indexed by $(U_1, U_1)$. We say that we have recovered the planted submatrix if 
$$
    \frac{\|\X - \X_0\|_F}{\|\X_0\|_F} < 10^{-3}. 
$$
We should expect a sharp transition from no recoveries in the 10 trials to recovery in all 10 trials as both $m$ and $q$ increase, as suggested by the sufficient condition~\eqref{eq:rec-gap}.

The results of this experiment are summarized in Figure~\ref{fig:E1-recovery}. 
Figure~\ref{fig:E1-recovery} includes the theoretical phase transition curve 
$$
    q = 0.25 +  \max \left\{\sqrt{\frac{M \log M}{3m}}, \frac{\log M}{m} \right\}
$$
used to mimic~\eqref{eq:rec-gap}.
The observed recovery rates closely match that predicted by Theorem~\ref{thm:suff-cond-random}. Indeed, when $q$ and $m$ satisfy~\eqref{eq:rec-gap}, the solution computed by Alg.~\ref{alg:densub} is exactly the matrix representation of the planted submatrix in all trials.

\begin{figure}[t]
    \centering
    \subfloat[Easy Problem $\A$]{    
        \includegraphics[width=0.23\textwidth]{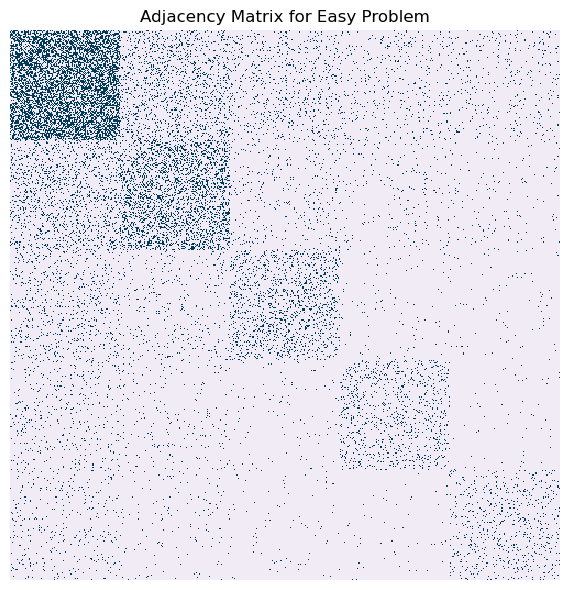}
        \label{fig:E1-A}
        }    
    \hfill
    \subfloat[Easy Problem $G$]{       
        \includegraphics[width=0.23\textwidth]{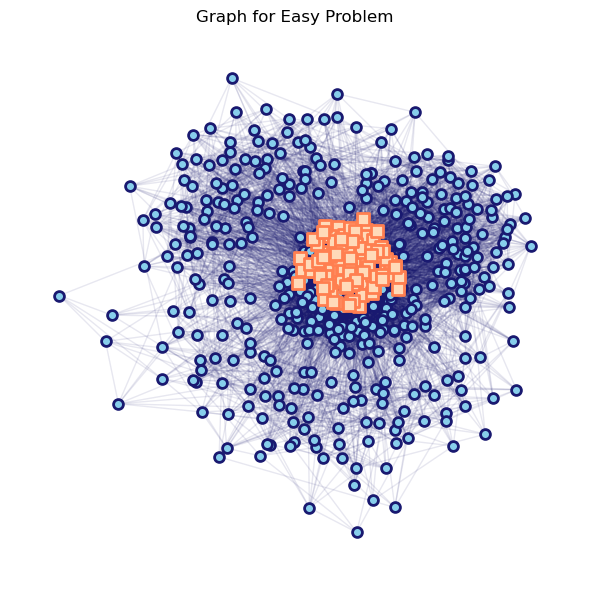}        
        \label{fig:E1-G}
        }    
    \subfloat[Hard Problem $\A$]{
        \includegraphics[width=0.23\textwidth]{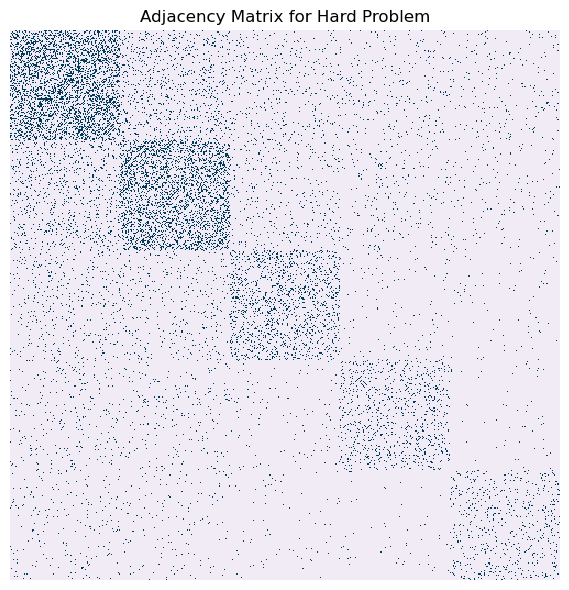}
        \label{fig:E1-hard-A}
        }
    \subfloat[Hard Problem $G$]{
        \includegraphics[width=0.23\textwidth]{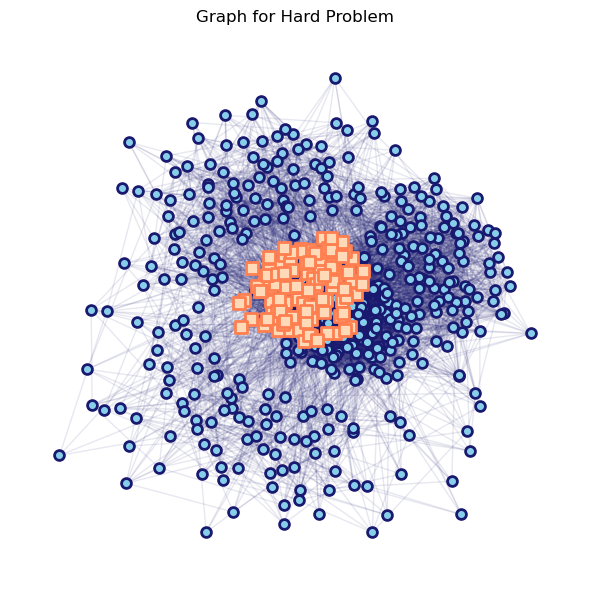}        
        \label{fig:E1-hard-G}
        }    
    \caption{Examples of easy and hard to solve instances of the densest submatrix problem sampled in Experiment 1. Here, {\color{orange}orange squares} indicate the nodes $U_1$ in the graph $G$ with adjacency matrix $\A$ that induce the planted subgraph.}
    \label{fig:E1-Examples}
\end{figure}
\subsubsection{Experiment 2}
\label{sec:expt-2}

We next consider problem instances of the densest $m \times m$-submatrix problem constructed as follows for each $(q,m)$-pair in
\begin{align*}
q &\in \{0.25, 0.30, 0.35, 0.4, 0.45, 0.5, 0.55, 
               0.6, 0.65, 0.7, 0.75, 0.85, 0.9, 
               0.95, 1 \} \\
    m&\in \{200, 225, 250, 275, 300, 325, 350, 375\}.
\end{align*}
\begin{itemize}
    \item 
    We divide $[M] = [500]$ into $k=2$ blocks $U_1, U_2$ of size $m$ and $M-m=500-m$, respectively. 
    \item 
    We sample matrix $\A$ from the planted dense submatrix model with probabilities 
    \begin{align*}
        p_{11} = p_{22} &= q \\ 
        p_{12} = p_{21} &= 0.25.
    \end{align*}
\end{itemize}
By construction, $(U_1, U_1)$ indexes the densest $m \times m$-submatrix and $U_1$-subgraph of the corresponding undirected graph $G$ when $m > M/2$.
If $m < M/2$ then the matrix contains several $(m,m)$-submatrices as dense as that indexed by $(U_1,U_1)$; in this case, it is impossible to guarantee that we will recover the matrix representation of $(U_1, U_1)$ as the optimal solution of~\eqref{eqn3.2}; see Figs.~\ref{fig:E2-impossible-A} and~\ref{fig:E2-impossible-G} for an example of such a matrix and graph.
On the other hand, the sufficient conditions for recovery given by Theorem~\ref{thm:suff-cond-random} and~\eqref{eq:rec-gap}, in particular, may not hold if 
\begin{itemize}
    \item $q \approx 0.25$, as illustrated by Figs.~\ref{fig:E2-hard-A} and~\ref{fig:E2-hard-G}; or 
    \item $m \approx M-m$, as illustrated by Figs.~\ref{fig:E2-hard2-A} and~\ref{fig:E2-hard2-G}.
\end{itemize}
In the first case, the density in the diagonal blocks is close to that of the off-diagonal blocks and we should not expect to distinguish between them. In the second, the sizes of the two diagonal blocks are close and, again, we should not expect to recover $(U_1,U_1)$ as the index set of the densest $m \times m$-submatrix.
However, the sufficient conditions given by Theorem~\ref{thm:suff-cond-random} suggest that we will have perfect recovery of $(U_1,U_1)$ from the optimal solution of~\eqref{eqn3.2} w.h.p.~if $m$ and $q$ are sufficiently large (see Figs.~\ref{fig:E2-easy-A} and~\ref{fig:E2-easy-G}).

As before, we empirically verify the phase transition to perfect recovery by 
generating $10$ problem instances $\A$ corresponding to each $(q,m)$-pair. For each of these matrices, we solve~\eqref{eqn3.2} with 
$$
\gamma=\frac{6}{m(q-p_{12})} = \frac{6}{m(q-0.25)}
$$
using the Python implementation of Alg.~\ref{alg:densub}.
We use the augmented Lagrangian parameter $\tau=2$, stopping tolerance $10^{-4}$, and maximum number of iterations $2000$ as in the previous experiment.
We record each computed solution $\X$ after rounding and compare to the matrix representation of the planted submatrix indexed by $(U_1, U_1)$. We say that we have recovered the planted submatrix if 
$$
    \frac{\|\X - \X_0\|_F}{\|\X_0\|_F} < 10^{-3}
$$
and count the number of recoveries of $\X_0$ for each $(q,m)$-pair.

The results of this experiment are summarized in Figure~\ref{fig:E2-recovery}; we plot the theoretical phase transition curve 
$$
    q = 0.25 + 6 \, \max \left\{\sqrt{\frac{M \log M}{3m}}, \frac{\log M}{m} \right\}
$$
given by~\eqref{eq:rec-gap} with constant $c_1 = 6$.
As in Section~\ref{sec:expt-1}, we have empirical recovery rates that match those predicted by Theorem~\ref{thm:suff-cond-random}. Again, when $q$ and $m$ satisfy~\eqref{eq:rec-gap} we have perfect recovery of $\X_0$ in almost all trials. The reported recovery rates are perhaps overly pessimistic in this case. We record recovery of the submatrix indexed by $(U_1, U_1)$. When $M >> m$, this submatrix is not the unique densest $m \times m$-submatrix in expectation. In this case, the solution $\X$ is a convex combination of several $m \times m$-submatrices of similar densities. We will see in the next section that we can identify the submatrices comprising these solutions using simple rounding techniques in certain cases.

\begin{figure}[t]
    \centering
    \subfloat[Easy Problem $\A$]{
        \includegraphics[width=0.23\linewidth]{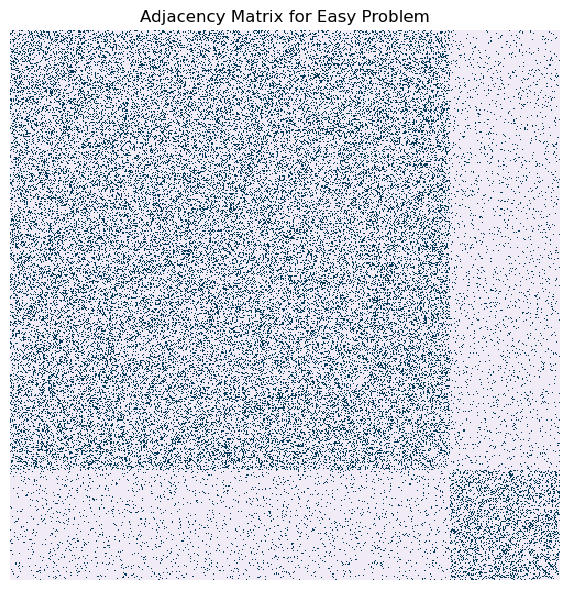}
        \label{fig:E2-easy-A}
    }
    \subfloat[Easy Problem $G$]{
        \includegraphics[width=0.23\linewidth]{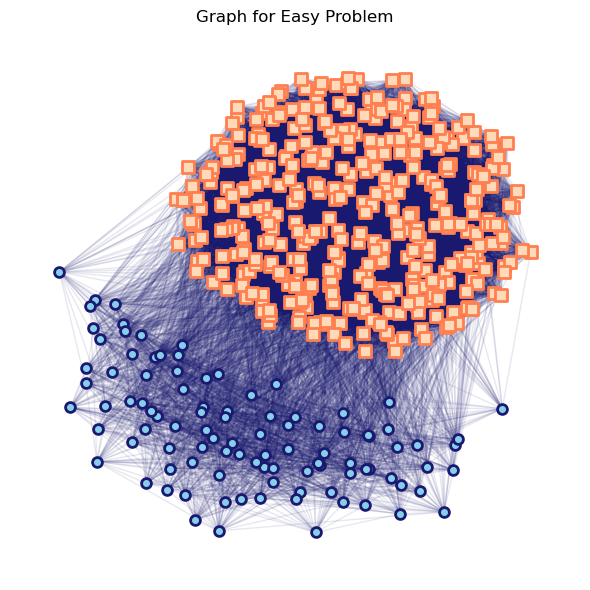}
        \label{fig:E2-easy-G}
    }
    \subfloat[Hard Problem $\A$]
        {
        \includegraphics[width=0.23\linewidth]{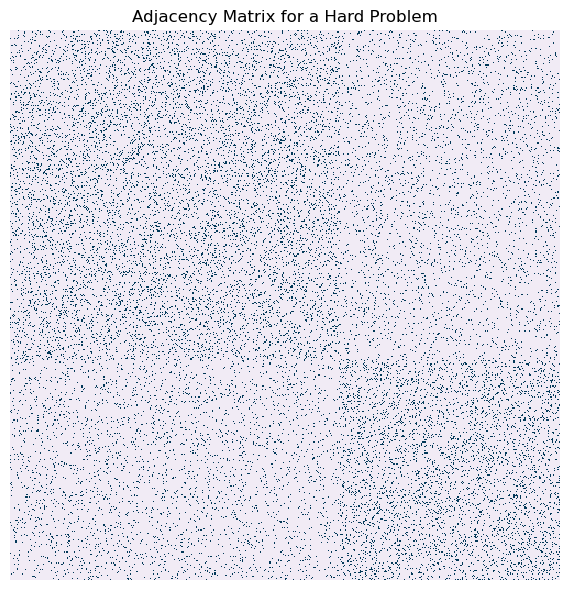}        
        \label{fig:E2-hard-A}
        }
    \subfloat[Hard Problem $G$]{
        \includegraphics[width=0.23\linewidth]{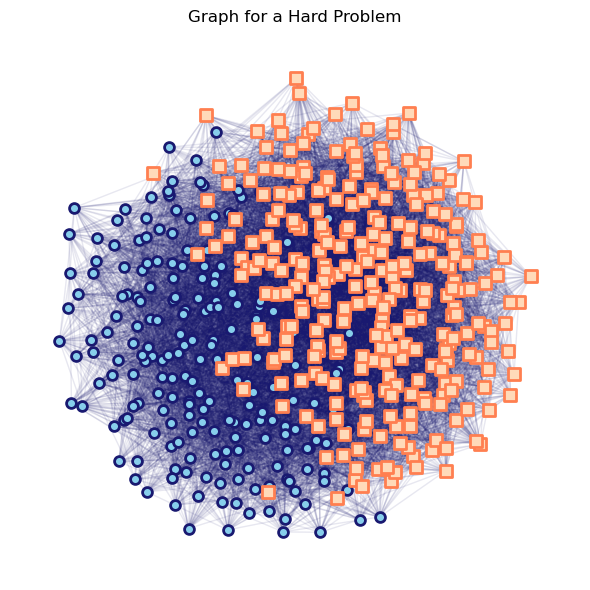}
        \label{fig:E2-hard-G}
        }

    \subfloat[Another Hard $\A$]{
        \includegraphics[width=0.23\linewidth]{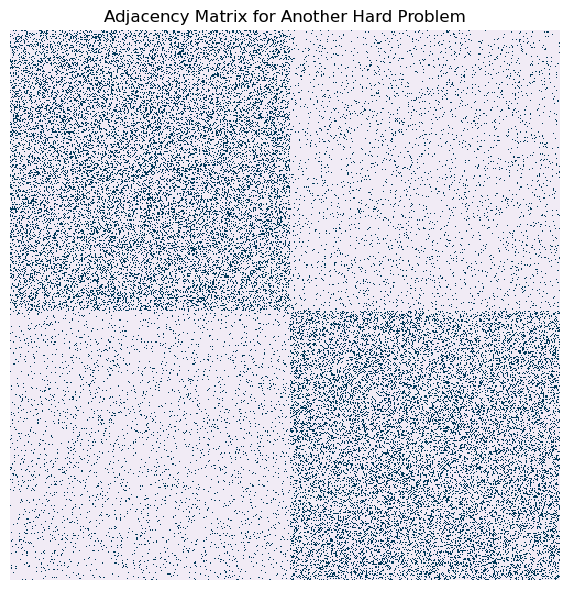}
        \label{fig:E2-hard2-A}
        }
    \subfloat[Another Hard $G$]{
        \includegraphics[width=0.23\linewidth]{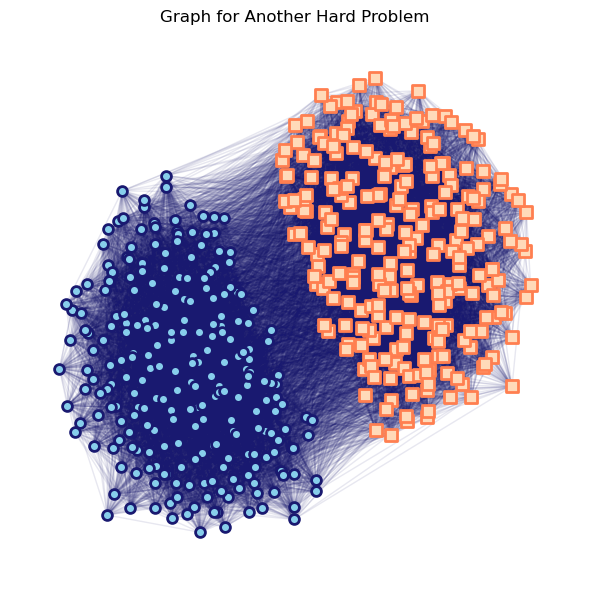}        
        \label{fig:E2-hard2-G}
        }
    \subfloat[Impossible Recovery $\A$]{
        \includegraphics[width=0.23\linewidth]{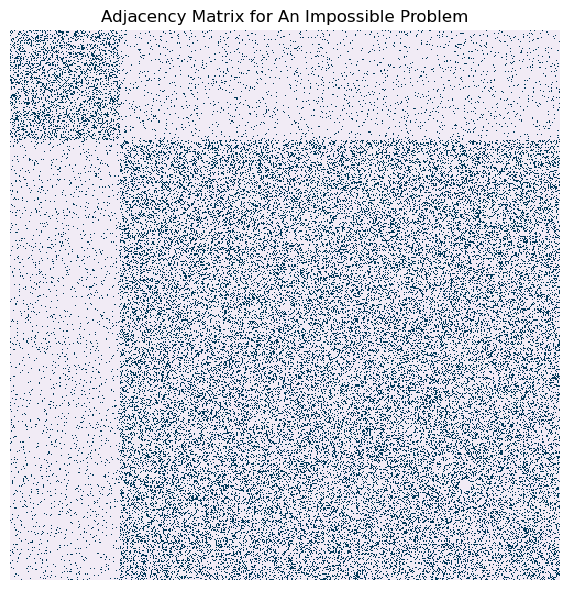}
        \label{fig:E2-impossible-A}
        }
    \subfloat[Impossible Recovery $G$]{
        \includegraphics[width=0.23\linewidth]{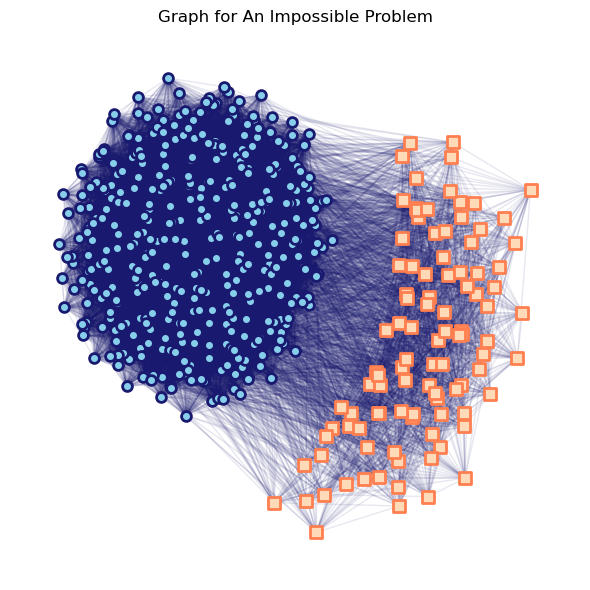}
        \label{fig:E2-impossible-G}
    }
    \caption{Examples of easy, hard, and impossible graphs for recovery of the planted submatrix sampled in Experiment 2. Here, {\color{orange}orange squares} indicate the nodes $U_1$ in the graph $G$ with adjacency matrix $\A$ that induce the planted subgraph.}
    \label{fig:E2-Examples}
\end{figure}

\begin{figure}[t]
    \centering
    \subfloat[Recovery Counts for Experiment 1]
        {
        \includegraphics[width=0.48\linewidth]{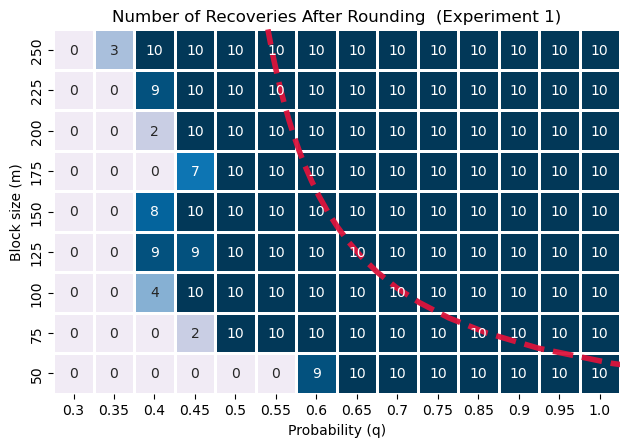}        
        \label{fig:E1-recovery}
        }
    \hfill
    \subfloat[Recovery Counts for Experiment 2]{
        \includegraphics[width=0.48\linewidth]{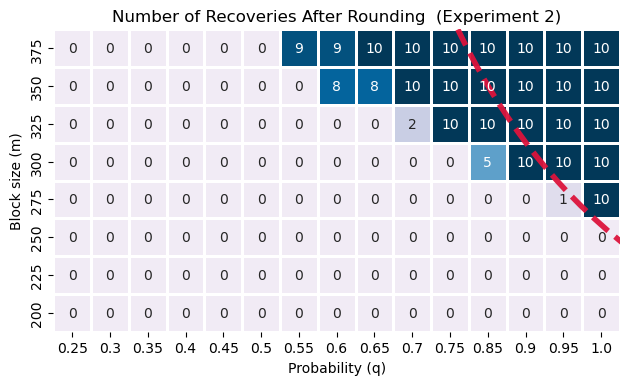}
        \label{fig:E2-recovery}
        }    
    \caption{Recovery counts for Experiments 1 and 2. Darker squares indicate more recoveries out of 10 trials for each $(q,m)$-pair.
    The dashed curve indicates the phase transition curve to perfect recovery given by the right-hand side of~\eqref{eq:rec-gap}}.
    \label{fig:E1and2-Recovery}
\end{figure}

\subsection{Identifying Maximum Cliques in Real-World Networks}
\label{sec:real-data}

\subsubsection{Benchmarking Networks}

We apply Alg.~\ref{alg:densub} to analyse several widely studied benchmarking network data sets. 
The maximum clique has been previously identified for each of these networks and is known. We will illustrate that Alg.~\ref{alg:densub} recovers the maximum clique for each of the following networks:
\begin{itemize}
    \item \emph{The Jazz Collaboration Network}~\cite{gleiser2003community} is a collaboration network where two musicians are connected if they have performed together. It contains a single maximum clique of size $30$.
    \item \emph{Zachary's Karate Club}~\cite{zachary1977information} is a social network of 34 members of a karate club in the 1970s. It contains two maximum cliques of size $5$.
    \item \emph{Dolphins}~\cite{lusseau2003bottlenose} is a social network of 62 bottlenose dolphins living of Doubtful Sound, New Zealand. It contains three maximum cliques of size $5$.
    \item \emph{Les Miserables}~\cite{knuth1993stanford} is a co-appearance network of characters in the novel \emph{Les Miserables} containing two maximum cliques of length $10$.
\end{itemize}

For each network, we apply our Python implementation of  Alg.~\ref{alg:densub} to solve the corresponding instance of~\eqref{eqn3.2}
with $\gamma = 12/m$, where $m = n$ is chosen to be the known size of the maximum clique in the network.
We use augmented Lagrangian parameter $\tau=2$ and stop the iterative process if an $\epsilon=10^{-4}$ suboptimal solution is found or $2000$ iterations have been performed.

For each of these networks, we successfully identify the maximum clique(s) present within the network. Indeed, the solution $\X$ returned by Alg.~\ref{alg:densub} when applied to the Jazz Collaboration Network is exactly the rank-one matrix representation of the maximum clique in this graph. On the other hand, each of the remaining networks contain multiple maximum cliques. In each case, the solution $\X$ returned by Alg.~\ref{alg:densub} is a convex combination of the matrix representations of each maximum clique. Simple rounding processes, e.g., taking the $m$ largest nonzero entries of the diagonal of $\X$, reveals one of the maximum cliques for each of these networks; we can easily identify the remaining cliques by inspection using the clique found in this rounding step. Figure~\ref{fig:real-data-1} provides a visualisation of these results. For each of these networks, we give a visualisation of the adjacency matrix, the optimal solution of~\eqref{eqn3.2} found by Alg.~\ref{alg:densub}, and the maximum clique found by the rounding procedure described above.

\begin{figure}[t]
    \centering
    \subfloat[Adjacency matrix of JAZZ]{        
        \includegraphics[width=0.25\linewidth]{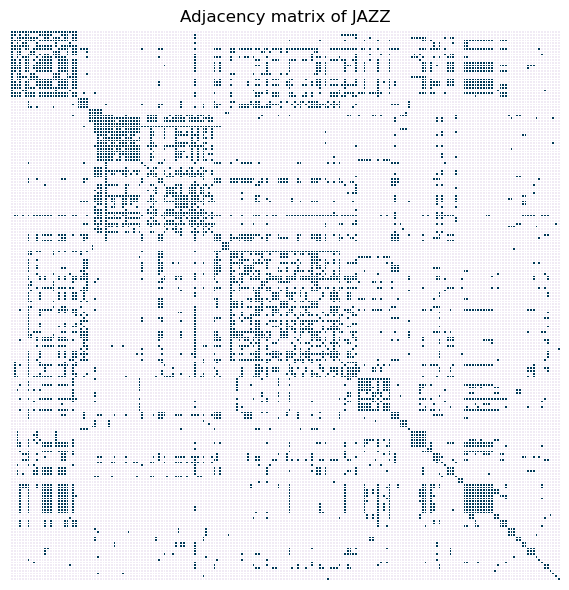}        
        \label{fig:Ajazz}
    }
    \subfloat[Recovered Solution $\X$ for JAZZ]{
        \includegraphics[width=0.25\linewidth]{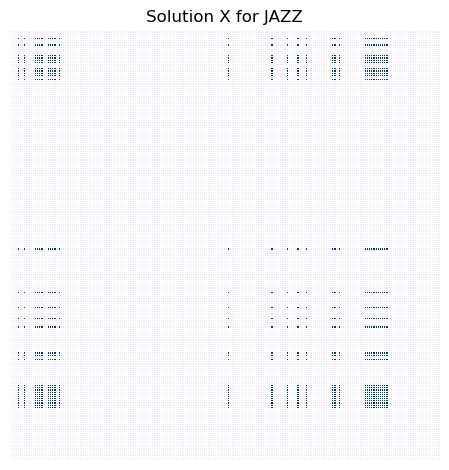}
        \label{fig:Xjazz}
    }
    \subfloat[Recovered clique in JAZZ]{
        \includegraphics[width=0.25\linewidth]{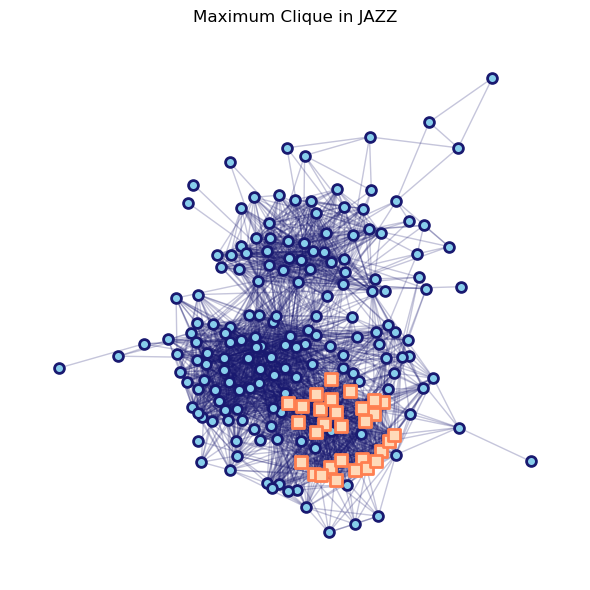}
        \label{fig:clique-jazz}
    }
    
    \subfloat[Adjacency matrix of KARATE]{
        \includegraphics[width=0.25\linewidth]{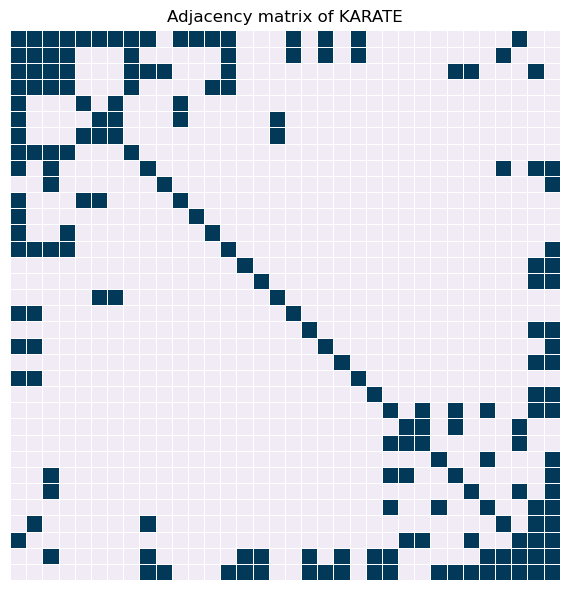}
        \label{fig:Akarate}
    }
    \subfloat[Solution $\X$ for KARATE]
        {
        \includegraphics[width=0.25\linewidth]{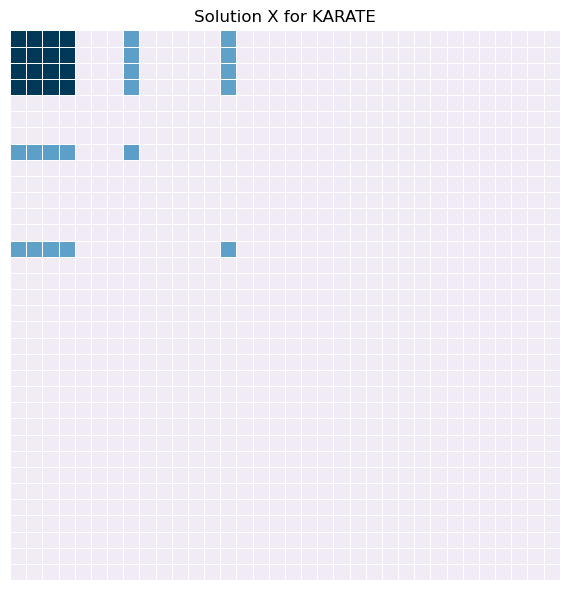}
        \label{fig:Xkarate}
    }
    \subfloat[Recovered clique in KARATE]{
        \includegraphics[width=0.25\linewidth]{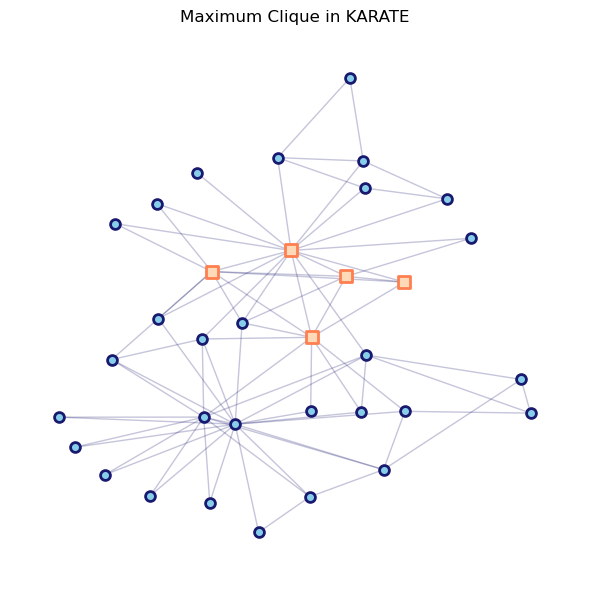}
        \label{fig:clique-karate}
    }

    \subfloat[Adjacency matrix of DOLPHINS]{
        \includegraphics[width=0.25\linewidth]{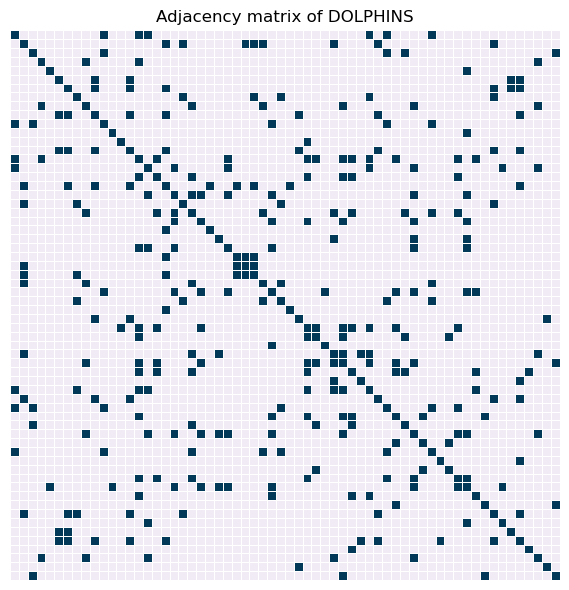}
        \label{fig:Adolphins}
    }
    \subfloat[Solution $\X$ for DOLPHINS]
        {
        \includegraphics[width=0.25\linewidth]{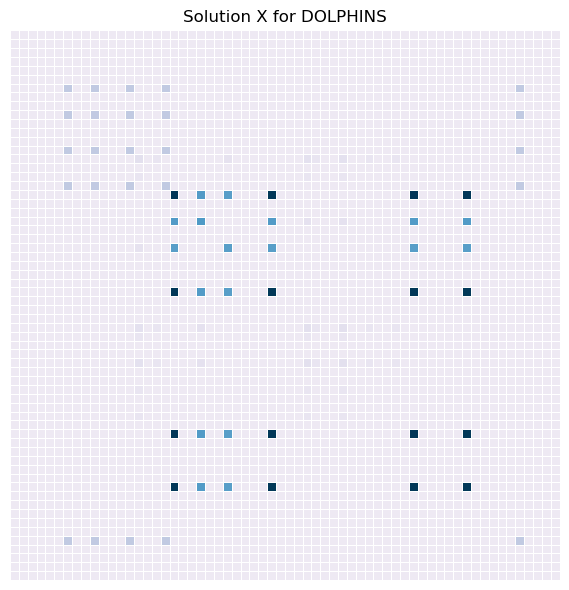}
        \label{fig:Xdolphins}
    }
    \subfloat[Recovered clique in DOLPHINS]{
        \includegraphics[width=0.25\linewidth]{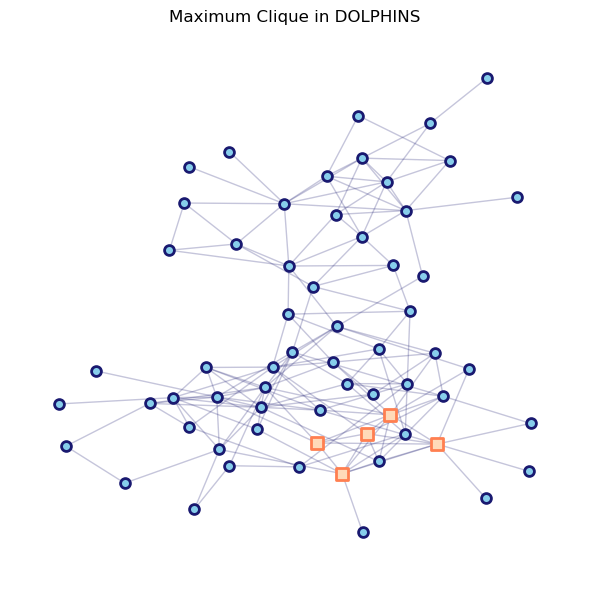}
        \label{fig:clique-dolphins}
    }

    \subfloat[Adjacency matrix of LESMIS]{
        \includegraphics[width=0.25\linewidth]{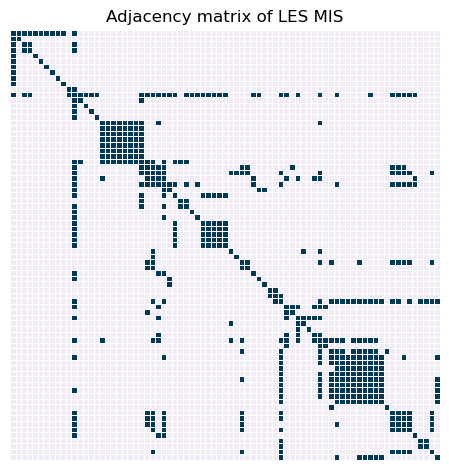}
        \label{fig:Alesmis}
    }
    \subfloat[Solution $\X$ for LESMIS]
        {
        \includegraphics[width=0.25\linewidth]{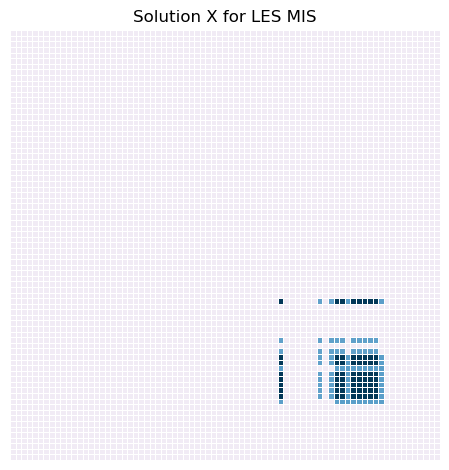}
        \label{fig:Xlesmis}
    }
    \subfloat[Recovered clique in LESMIS]{
        \includegraphics[width=0.25\linewidth]{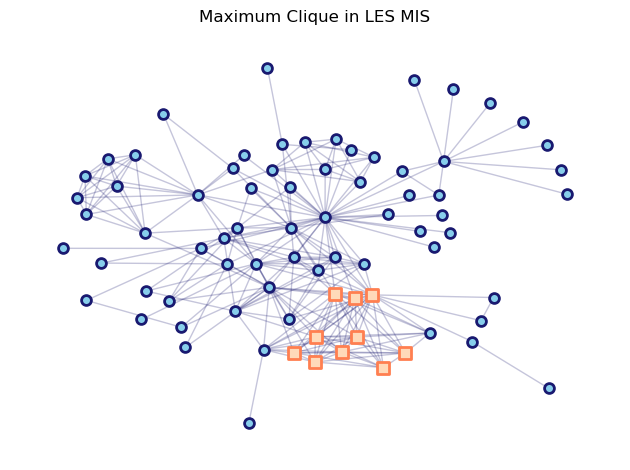}
        \label{fig:clique-lesmis}
    }
    
    \caption{Adjacency matrices of the JAZZ, KARATE, DOLPHINS, and LESMIS networks and recovered maximum cliques.}
    \label{fig:real-data-1}
\end{figure}

\subsubsection{Maximum Cliques of the \emph{A Song of Ice and Fire} Series}
\label{sec:got-data}

We conclude by applying Alg.~\ref{alg:densub} to identify the largest clique in each of the 5 novels in George R.R.~Martin's \emph{A Song of Ice and Fire (ASOIAF)} saga using the character interaction networks considered in \cite{beveridge2016network}; Each network, one for each book in series, has nodes corresponding to characters and edge weights corresponding to the number of times two character names appear within 15 words of one another, which models the number of interactions between each pair of characters in the book. A detailed discussion of the properties of these interaction networks, as well as all network data are available at~\cite{networkofthronesNetworkThrones}. A similar analysis of the television adaptation of \emph{ASOIAF}, HBO's \emph{Game of Thrones}, is presented in~\cite{beveridge2018game}.

We perform the following preprocessing steps to the character interaction network for each book:
\begin{itemize}
    \item
    We first \emph{binarize} the network by setting any edge weight greater than a pre-chosen threshold $t$ equal to $1$ and all other edge weights equal to $0$. This yields an \emph{undirected, unweighted} character interaction network, where two characters are adjacent if they have at least $t$ interactions in a given book.
    We use threshold $t=5$ for Book 5, and threshold $t=10$ for remaining the books.
    \item 
    We then form a final interaction network $G$ where two characters are adjacent if there is at least one walk of length $2$ between them in the binarized interaction network.   
\end{itemize}

For each interaction network $G$, we use the \texttt{find\_cliques}\footnote{\href{https://networkx.org/documentation/stable/reference/algorithms/generated/networkx.algorithms.clique.find_cliques.html}{https://networkx.org/documentation/stable/reference/algorithms/generated/networkx.algorithms.clique.find\_cliques.html}} function from the \texttt{NetworkX} library to calculate the set of all maximal clique of $G$ using the algorithms developed in \cite{bron1973algorithm, tomita2006worst, cazals2008note}; after calculating \emph{all} maximal cliques, we find the maximum clique as the maximal clique with largest cardinality. 
We then apply the Python implementation of  Alg.~\ref{alg:densub} to solve the corresponding instance of~\eqref{eqn3.2} 
with the adjacency matrix of $G$ as input and $\gamma = 12/m$, where $m=n$ is equal to size of the maximum clique within the character interaction network $G$. As in the experiments in Sect.~\ref{sec:real-data}, we employ augmented Lagrangian parameter $\tau=2$, stopping tolerance $10^{-4}$, and a maximum of $2000$ iterations.

We correctly identify the unique maximum clique in the interaction network $G$ for Books 1, 2, 3, and 5. Book 4 has two overlapping maximum cliques of size $5$, and we recover a convex combination $\X$ of the matrix representations of these cliques; the rounding procedure described in Sect.~\ref{sec:real-data} correctly identifies the two maximum cliques in the interaction network for Book 4.

The maximum clique in each interaction network represents the largest collection of fully connected characters in the corresponding book. Here, the membership of these cliques reflects the dynamics of the series of novels. In the first book, we observe a single large community primarily comprised of the Baratheon, Stark, and Lannister families. As the saga progresses, the action of the novels scatters these characters across many disparate locations. This corresponds to interaction networks consisting of much smaller cliques and clusters. For each book and interaction network, we successfully identified the largest of these cliques, but further analysis is required to find all of the underlying communities. This analysis aligns with the more detailed analysis of the network dynamics of the \emph{ASOIAF} series found in~\cite{beveridge2016network, beveridge2018game, networkofthronesNetworkThrones}.

\begin{figure}[t]
    \centering
    \subfloat[Adjacency matrix for Book 1]{
        \includegraphics[width=0.24\linewidth]{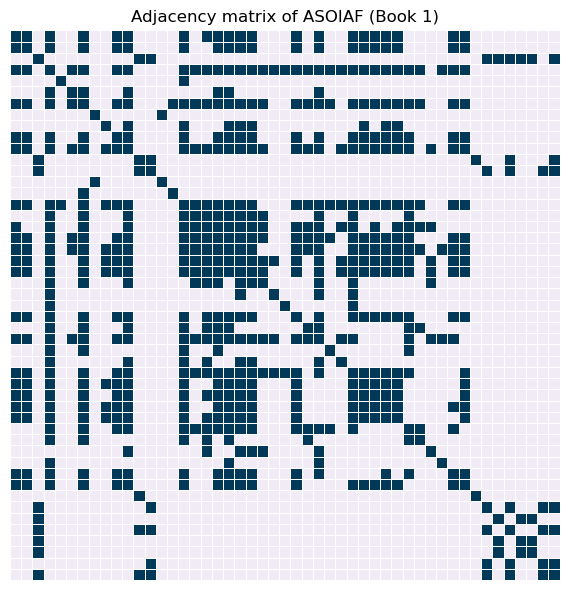}
        \label{fig:A1}
    }
    \subfloat[Solution $\X$ for Book 1]{
        \includegraphics[width=0.24\linewidth]{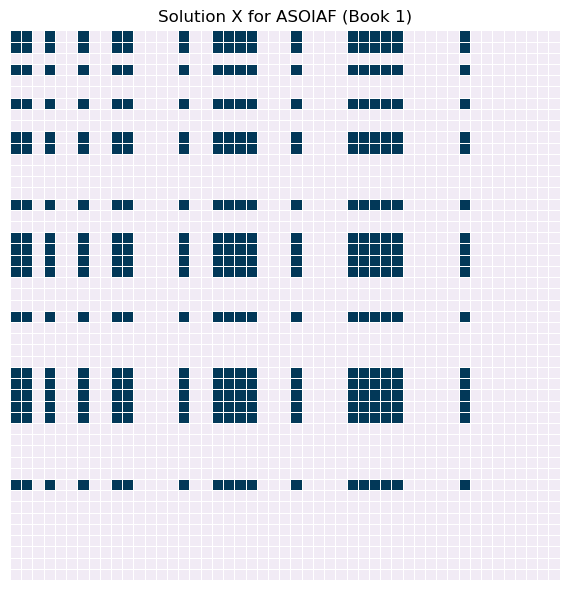}
        \label{fig:X1}
        }    
    \subfloat[Recovered clique for Book 1]{        
        \includegraphics[width=0.24\linewidth]{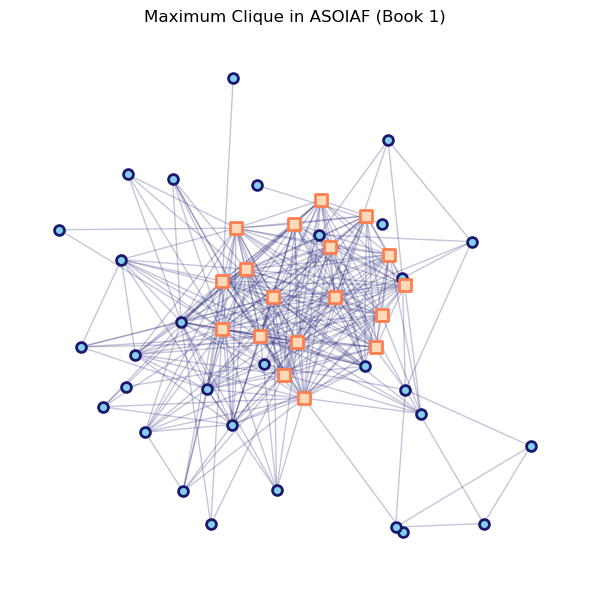}        
        \label{fig:clique-book1}
        }

    \subfloat[Adjacency matrix for Book 2]{
        \includegraphics[width=0.24\linewidth]{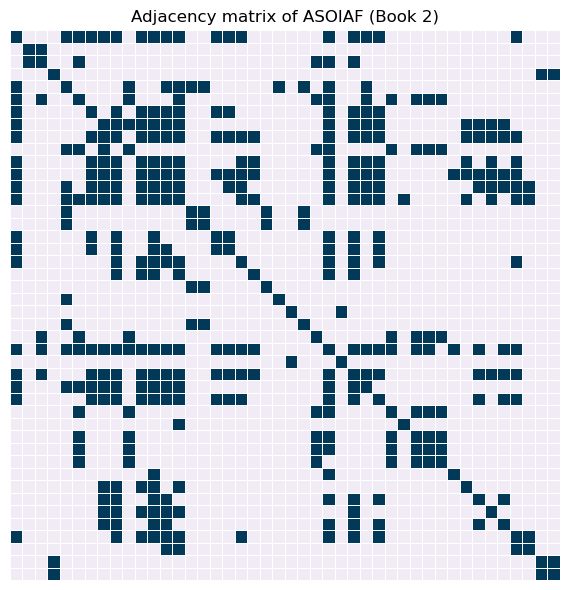}
        \label{fig:A2}
    }
    \subfloat[Solution $\X$ for Book 2]{
        \includegraphics[width=0.24\linewidth]{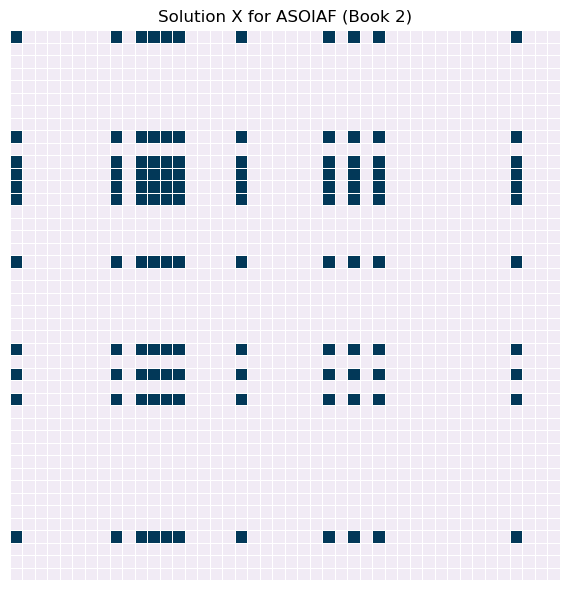}
        \label{fig:X2}
        }    
    \subfloat[Recovered clique for Book 2]{        
        \includegraphics[width=0.24\linewidth]{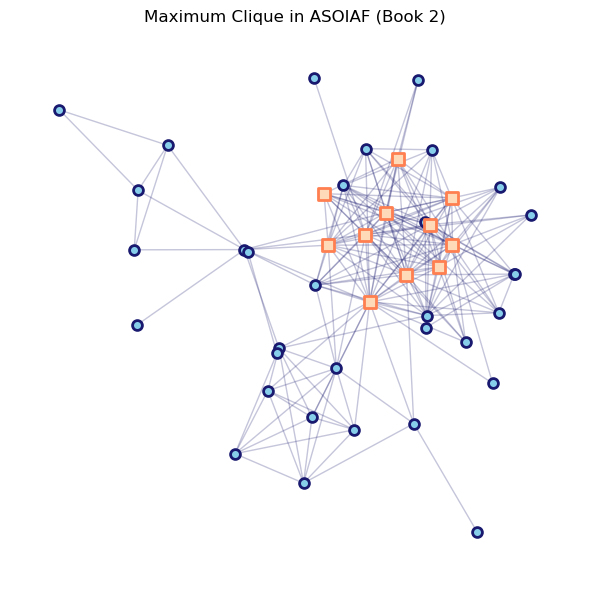}        
        \label{fig:clique-book2}
        }

    \subfloat[Adjacency matrix for Book 3]{
        \includegraphics[width=0.24\linewidth]{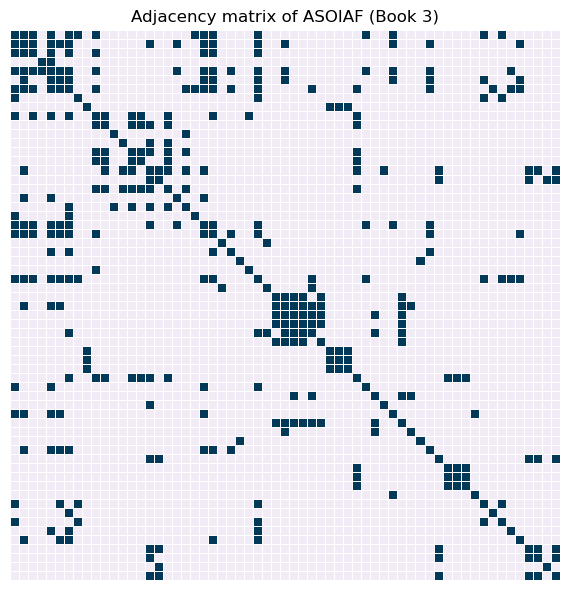}
        \label{fig:A3}
    }
    \subfloat[Solution $\X$ for Book 3]{
        \includegraphics[width=0.24\linewidth]{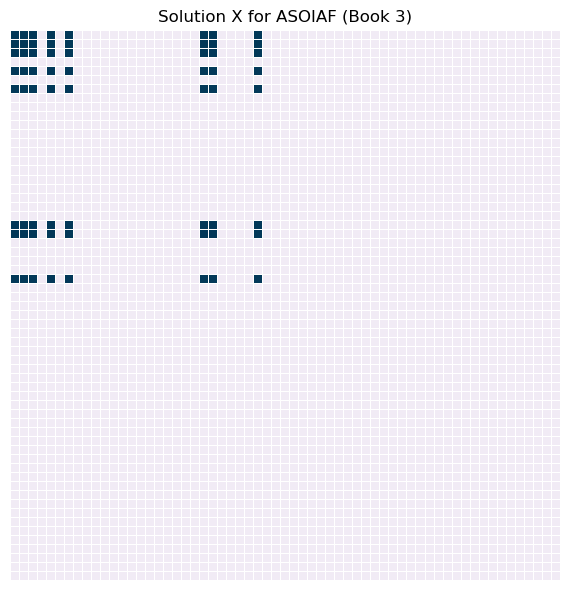}
        \label{fig:X3}
        }    
    \subfloat[Recovered clique for Book 3]{        
        \includegraphics[width=0.24\linewidth]{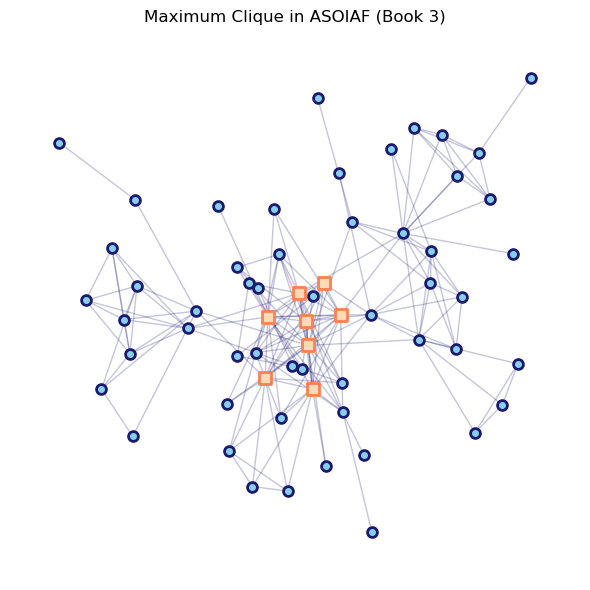}        
        \label{fig:clique-book3}
        }

    \subfloat[Adjacency matrix for Book 5]{
        \includegraphics[width=0.24\linewidth]{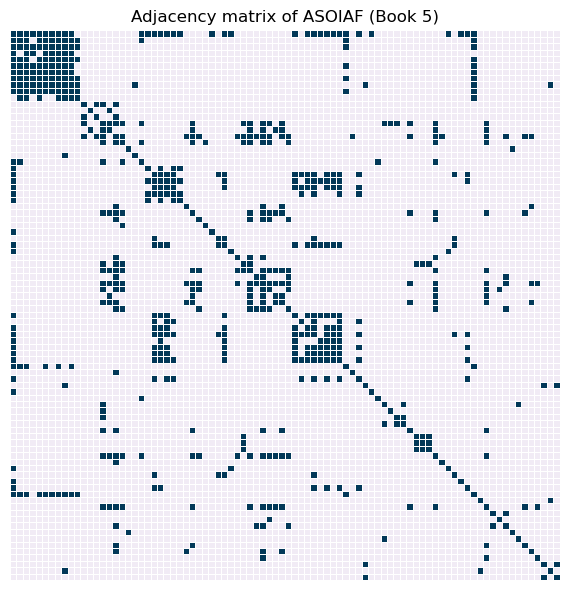}
        \label{fig:A5}
    }
    \subfloat[Solution $\X$ for Book 5]{
        \includegraphics[width=0.24\linewidth]{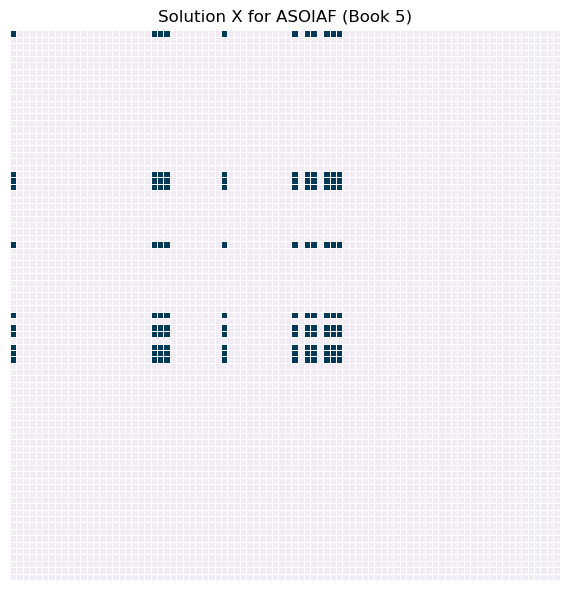}
        \label{fig:X5}
        }    
    \subfloat[Recovered clique for Book 5]{        
        \includegraphics[width=0.24\linewidth]{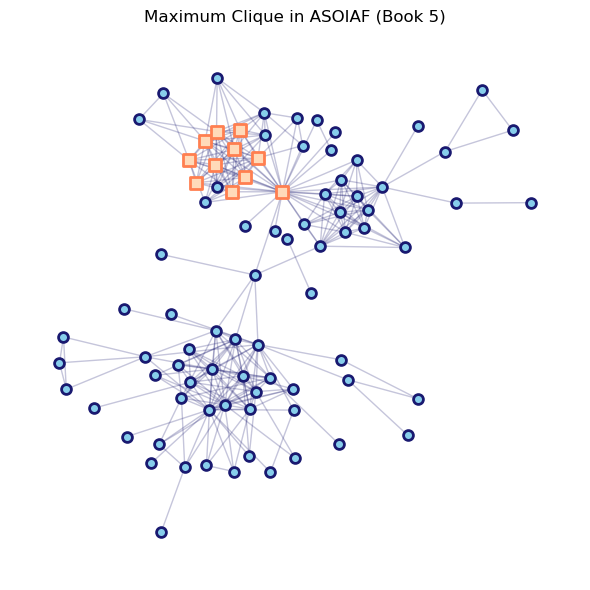}        
        \label{fig:clique-book5}
        }    
    \caption{Adjacency matrices of the character interaction networks for Books 1,2,3, and 5 of the \emph{ASOIAF} saga and recovered maximum cliques.}
    \label{fig:SOIAF1235}
\end{figure}

\begin{figure}[t]
    \centering
    \subfloat[Adjacency matrix for Book 4]{
        \includegraphics[width=0.48\linewidth]{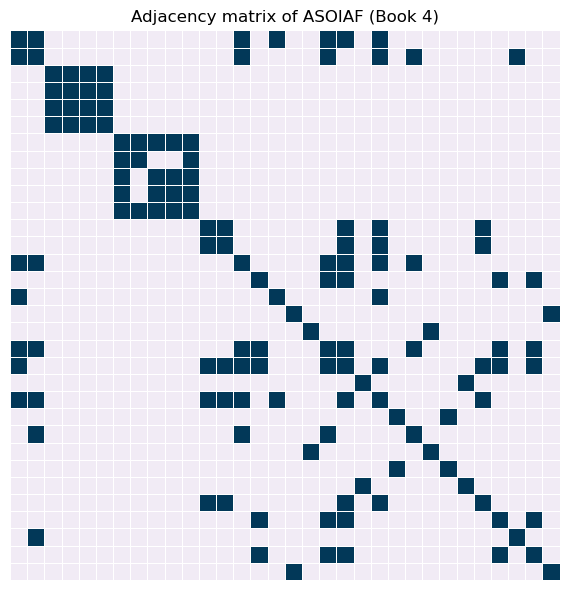}
        \label{fig:A4}
    }
    \subfloat[Solution $\X$ for Book 4]
        {
        \includegraphics[width=0.48\linewidth]{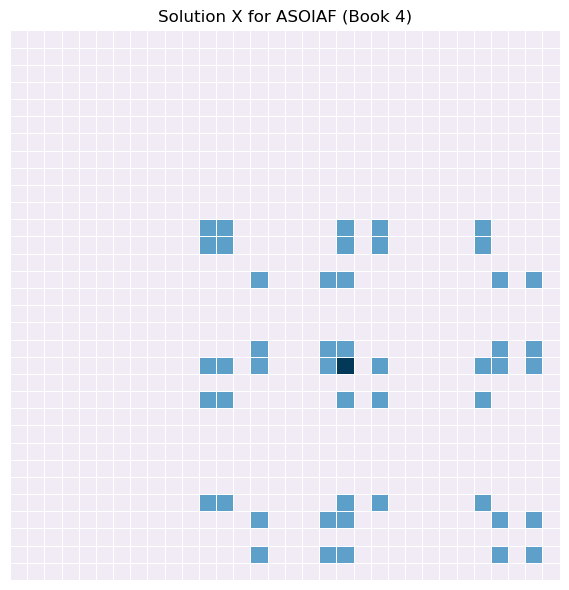}
        \label{fig:X4}
        }
    
    \subfloat[A maximum clique for Book 4]{
        \includegraphics[width=0.48\linewidth]{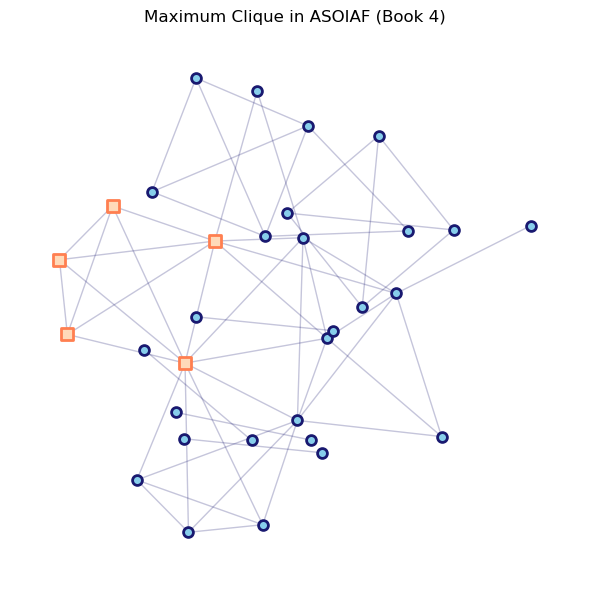}        
        \label{fig:clique-book4a}
        }    
    \subfloat[Another maximum clique for Book 4]{
        \includegraphics[width=0.48\linewidth]{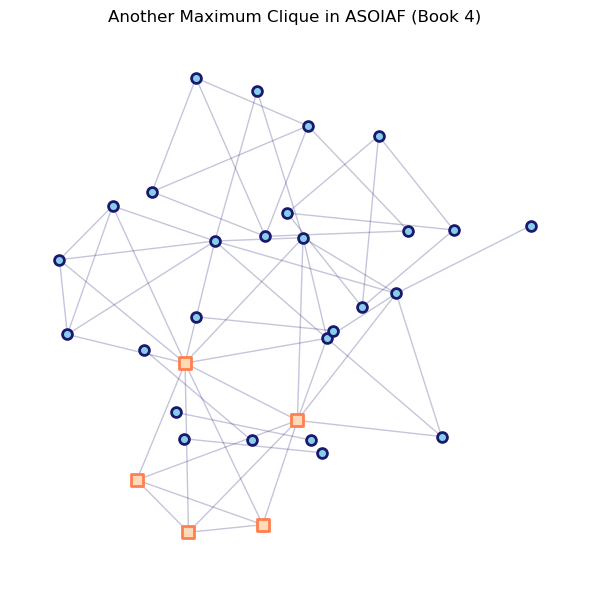}        
        \label{fig:clique-book4b}
    }
    
    \caption{Adjacency matrix of the character interaction network for Book 4  of the \emph{ASOIAF} saga with optimal solution $\X$ of~\eqref{eqn3.2}. We obtain two maximum cliques in this network from $\X$.}
    \label{fig:SOIAF4}
\end{figure}

\begin{table}[h]
    \centering
    \begin{tabular}{ccc} \toprule 
         Book & $\omega(G)$ & Maximum Clique(s)  \\ \midrule
         1 & 18 &  Jon-Arryn, Varys, Barristan-Selmy, Pycelle, Stannis-Baratheon, Jaime-Lannister,\\  
         & & Tywin-Lannister, Petyr-Baelish, Renly-Baratheon, Sansa-Stark, Robert-Baratheon, \\ 
         && Eddard-Stark, Bran-Stark,  Jon-Snow, Tyrion-Lannister, Catelyn-Stark, \\ 
         && Joffrey-Baratheon, Cersei-Lannister \\ \midrule
         2 & 11 & Stannis-Baratheon, Jaime-Lannister, Joffrey-Baratheon, Sansa-Stark,    \\
         && Robb-Stark, Renly-Baratheon, Tyrion-Lannister, Cersei-Lannister, Varys,  \\
         && Sandor-Clegane, Petyr-Baelish \\ \midrule
         3 & 8 & 
         Kevan-Lannister, Jaime-Lannister, Sansa-Stark, Joffrey-Baratheon,   \\ 
         && Tywin-Lannister, Robb-Stark,  Cersei-Lannister, Tyrion-Lannister \\ \midrule 
         4 & 5 & Tommen-Baratheon, Tyrion-Lannister, Tywin-Lannister, Robert-Baratheon,  \\ && Osmund-Kettleblack  \\ 
         && Tommen-Baratheon, Loras-Tyrell, Osney-Kettleblack, High-Sparrow, Taena-of-Myr \\ \midrule 
         5 & 11 & Daario-Naharis, Skahaz-mo-Kandaq, Missandei, Reznak-mo-Reznak, Belwas,  \\ 
         &&Hizdahr-zo-Loraq, Barristan-Selmy, Grey-Worm, Quentyn-Martell, Galazza-Galare, \\ && Daenerys-Targaryen  \\\bottomrule
    \end{tabular}
    \caption{Maximum cliques and size of maximum clique $\omega(G)$ in the character interaction network for each book in the \emph{ASOIAF} series. We report both maximum cliques identified for Book 4.}
    \label{tab:asoiaf-cliques}
\end{table}

\subsection{Choice of Regularization Parameter}

The sufficient conditions for perfect recovery given in Section~\ref{sec:DSM} hold for choice of the regularization parameter $\gamma$ within a certain range. The endpoints of this interval are proportional to $1/\sqrt{m_1 n_1}$ and depend on the probabilities $p_{11}$ and $p^*$. In practice, we may not know these probabilities. 

In our experiments involving synthetic data described in Sections~\ref{sec:expt-1} and~\ref{sec:expt-2}, we use the value of $p_{11}$ and $p^*$ used to construct our random matrices. Specifically, we choose
$$
\gamma = \frac{6}{(p_{11} - p^*)\sqrt{mn}} = \frac{6}{\sqrt{mn}(q-p^*)},
$$
since the numerator $6$ is in the range of coefficients satisfying~\eqref{eq:gamma-range}.
In our analysis of real-world networks in Section~\ref{sec:real-data}, we try to identify the densest submatrix with area equal to the square of the size of the maximum clique of each network. Here, we can assume that $p_{11} = 1$ since the block indexed by the maximum clique is fully dense. On the other hand, we use the rough estimate $p^* \approx 1/2$ in our formula for $\gamma$:
$$
\gamma = \frac{12}{m} = \frac{6}{m/2} \approx \frac{6}{m(p_{11} - p^*)}. 
$$
In practice, if either $p_{11}$ or $p^*$ are not known, we suggest using simple estimation procedures combined with cross-validation to choose $\gamma$. For example, $p_{11}$ can be approximated by normalizing the maximum row or column sum of $\A$, while $p^*$ can be approximated using the density of the graph or a similar function of row and column sums of $\A$. We can then use cross-validation to select $\gamma$ from an interval centered at the choice of $\gamma$ given by these  approximations with a combination of density and rank of solution used as the cross-validation criterion.

\subsubsection{Robustness to Choice of Regularization Parameter}

We performed the following experiment to test the robustness of the recovery guarantees given by Theorem~\ref{thm:suff-cond-random} and Theorem~\ref{thm:suff-cond-adv} to the choice of the regularization parameter $\gamma$:
\begin{itemize}
    \item For each $\gamma$ in a set of potential values of the regularization parameter, we solve~\eqref{eqn3.2} using Alg.~\ref{alg:densub}. 
    \item For each choice of $\gamma$, we stop Alg.~\ref{alg:densub} after a $10^{-4}$-suboptimal solution is found or $2000$ iterations have been performed.
    \item We then compare the recovered solution $\X$ with the indicator matrix $\X_0$ for the known densest submatrix before and after rounding each entry of $\X_0$ to the nearest integer.      
\end{itemize}

\paragraph{Synthetic Data}
\label{sec:gamma-synth}

We first sample  $10$ matrices from the planted submatrix model as follows:
\begin{itemize} 
\item 
    We set $M,N=500$ and partition $[M]$ and $[N]$ into two blocks with $m_1, n_1=400$ and $m_2, n_2 = 100$.
\item We set $p_{11} = 0.75$ and $p_{12} = p_{21} = p_{22} = 0.25$.
\end{itemize}
The sufficient condition given in Theorem~\ref{thm:suff-cond-random} suggests that we should have perfect recovery of the hidden dense $m_1\times n_1$-submatrix for certain choices of $\gamma$ w.h.p.~in this case.

We apply Alg.~\ref{alg:densub} to solve each instance of~\eqref{eqn3.2} corresponding to each 
$$
    \gamma \in \{0, 0.005, 0.01 , 0.015, \dots, 0.145, 0.15\}. 
$$
This set includes the value used in Sections~\ref{sec:expt-1} and~\ref{sec:expt-2}
$$
    \gamma = \frac{12}{\sqrt{mn}} = 0.03.
$$
The result of this experiment is summarized in Figure~\ref{fig:gamma-synth}. Here, we observe that recovery error is below the stopping tolerance $10^{-4}$ for all $\gamma$ chosen within the interval $[0.02, 0.12]$ with poor recovery error outside this interval. This agrees with the claim that we should have perfect recovery (up to error from early termination) for all $\gamma$ within a certain interval.

\begin{figure}[t]
    \centering
    \subfloat[Recovery Error]{        
        \includegraphics[width=0.31\linewidth]{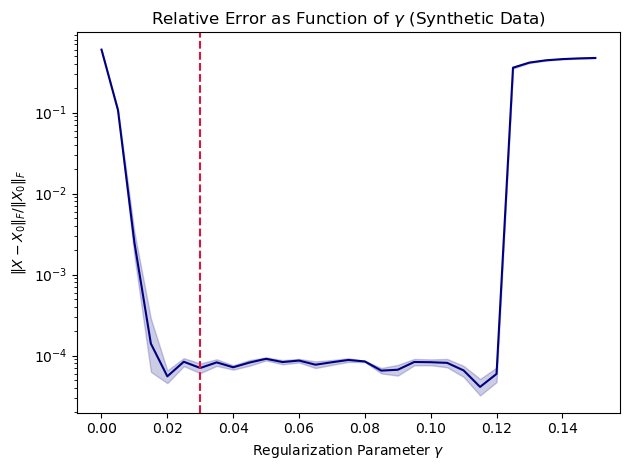}
        \label{fig:gamma-synth-err}
    }
    \subfloat[Recovery Error After Rounding]{
        \includegraphics[width=0.31\linewidth]{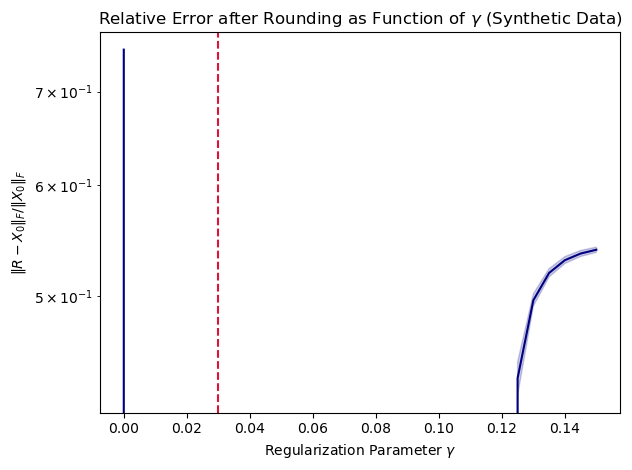}
        \label{fig:gamma-synth-round}
    }
    \subfloat[Run-time (s)]{        
        \includegraphics[width=0.31\linewidth]{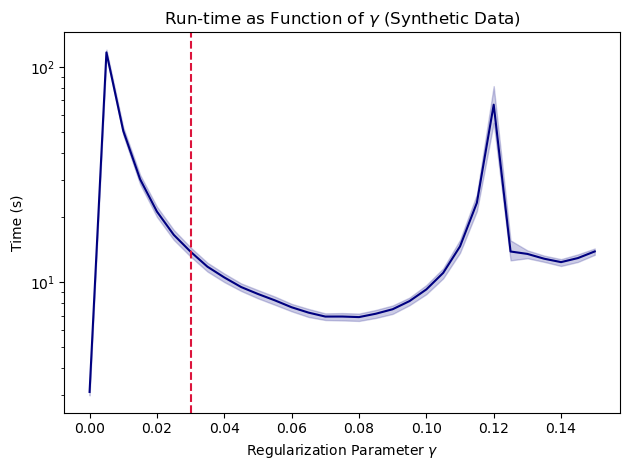}        
        \label{fig:gamma-run-time}
    }    
    \caption{Average recovery error with $95\%$-confidence interval before and after rounding for each choice of $\gamma$, as well as average run-time, for $10$ matrices sampled from the planted submatrix model where we should expect perfect recovery for well-chosen $\gamma$.}
    \label{fig:gamma-synth}
\end{figure}

\paragraph{Real-World Networks}
\label{sec:gamma-real}

We repeated the analysis of Section~\ref{sec:gamma-synth} with two of the networks considered in Sections~\ref{sec:real-data} and~\ref{sec:got-data}:
\begin{itemize}
    \item The JAZZ Network with $m=n=30$ and $\gamma \in \{0,    0.008, 0.016, 0.024, \dots, 0.392,  0.4\}$; and 
    \item Book 1 of the \emph{ASOIAF} series with $m=n=18$ and $\gamma \in \{0,   0.02, 0.04, 0.06, \dots, 0.98, 1 \}$.
\end{itemize}
For each network and choice of $m$ and $\gamma$, we call Alg.~\ref{alg:densub} and compare the calculated solution with the known maximum clique of size $m$.
The recovery error before and after rounding is visualised in Figures~\ref{fig:gamma-jazz} and~\ref{fig:gamma-got}. In both cases, we can observe that we have perfect recovery of the maximum clique for all $\gamma$ above a critical value. For example, we have recovery, before and after rounding, if we use 
\begin{equation}\label{eq:real-gamma-estimate}
\gamma = \frac{6}{m(1 - \bar{p})},
\end{equation}
indicated by the dashed vertical line in Figures~\ref{fig:gamma-jazz} and~\ref{fig:gamma-got},
where we estimate the unknown parameter $p^*$ using the $\bar p = \frac{1}{m^2} \sum_{i,j} a_{ij}$ is the density of the network (with $\A$ denoting the adjacency matrix of the network).

\begin{figure}[t]
    \centering
    \subfloat[Recovery Error]{
        \includegraphics[width=0.31\linewidth]{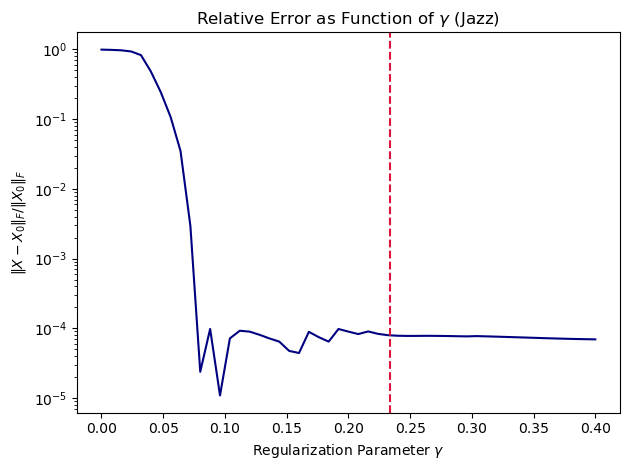}
        \label{fig:gamma-jazz-err}
    }
    \subfloat[Recovery Error After Rounding]{
        \includegraphics[width=0.31\linewidth]{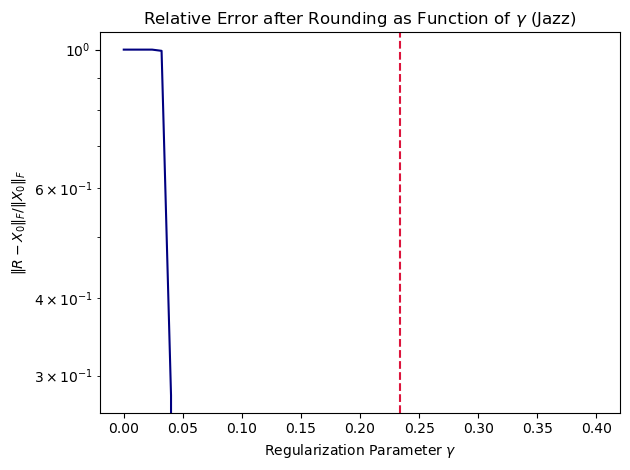}
        \label{fig:gamma-jazz-round}
    }
    \subfloat[Run-time (s)]{
        \includegraphics[width=0.31\linewidth]{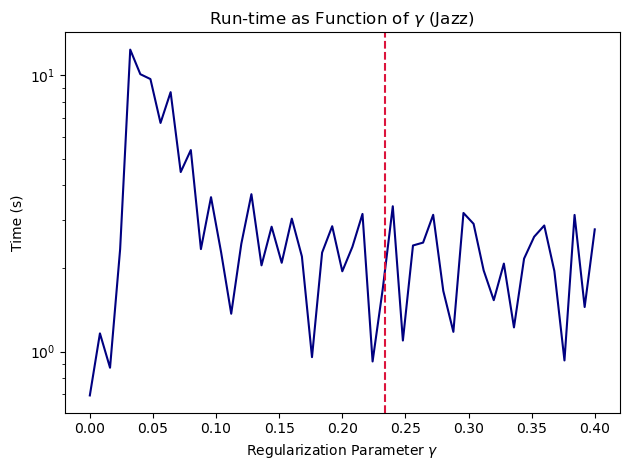}
        \label{fig:gamma-jazztime}
    }
    \caption{Recovery error before and after rounding, and run-time, for each choice of $\gamma$ for the JAZZ collaboration network. The dashed vertical line denotes the choice of $\gamma$ using density as an estimate of $p^*$ according to~\eqref{eq:real-gamma-estimate}.}
    \label{fig:gamma-jazz}
\end{figure}

\begin{figure}[t]
    \subfloat[Recovery Error]{
        \includegraphics[width=0.31\linewidth]{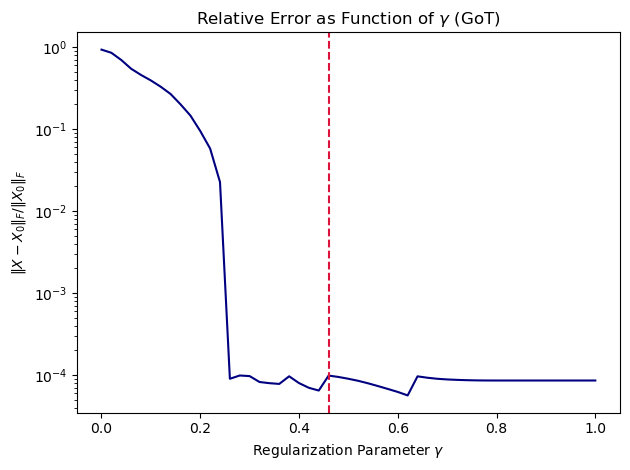}
        \label{fig:gamma-GOT-err}
    }
    \subfloat[Recovery Error After Rounding]{
        \includegraphics[width=0.31\linewidth]{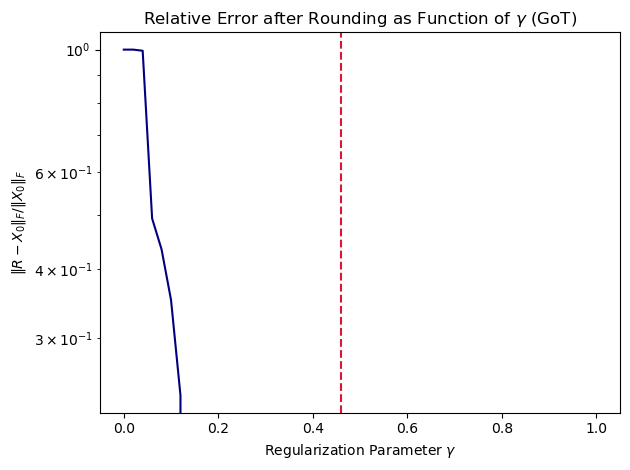}
        \label{fig:gamma-GOT-round}
    }
    \subfloat[Run-time (s)]{
        \includegraphics[width=0.31\linewidth]{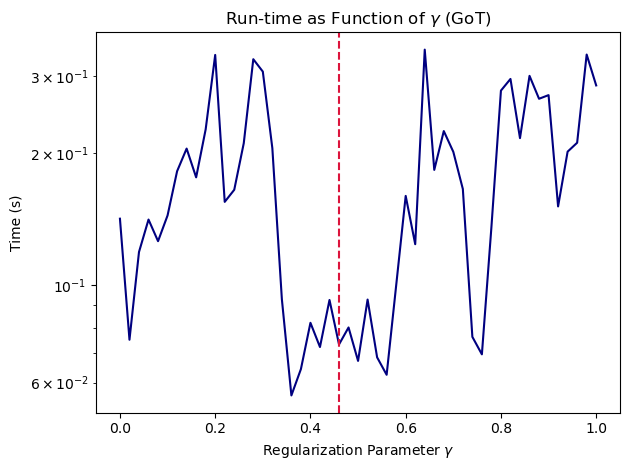}
        \label{fig:gamma-GOTtime}
    }
    \caption{Recovery error before and after rounding, and run-time, for each choice of $\gamma$ for the \emph{ASOIAF} Book 1 interaction network. The dashed vertical line denotes the choice of $\gamma$ using density as an estimate of $p^*$ according to~\eqref{eq:real-gamma-estimate}.}
    \label{fig:gamma-got}
\end{figure}

\paragraph{The Benefits of Rounding}
\label{sec:rounding-gamma}

In each of these experiments, we have perfect recovery after rounding for a wider range of $\gamma$ than without rounding.
When the hypothesis of Theorems~\ref{thm:suff-cond-random} and~\ref{thm:suff-cond-adv} are satisfied the solution at termination of Algorithm~\ref{alg:densub} is close to the rank-one matrix representation of the densest $m\times n$-submatrix; in this case, rounding will give exactly the rank-one indicator matrix. 

When $\gamma$ is chosen outside the range of values given in~\eqref{eq:gamma-range}, the rank-one matrix representation may not be optimal for~\eqref{eqn3.2}. In this case, we still may be able to identify the densest $m\times n$-submatrix using rounding. Figures~\ref{fig:synth-round-gamma},~\ref{fig:gamma-jazz-round}, and~\ref{fig:gamma-GOT-round} illustrate this phenomena. In each case, we find an optimal solution $\X^*$ of the convex relaxation~\eqref{eqn3.2} with strictly fractional entries. Rounding the entries of $\X^*$ to the nearest integer yields the matrix representation of the planted $m\times n$-submatrix in each of these cases; this rank-one binary matrix is suboptimal for~\eqref{eqn3.2} with this choice of $\gamma$ but does reveal the solution for the original combinatorial problem.
However, if $\gamma$ is chosen too far outside this interval, we obtain optimal solutions for the relaxation~\eqref{eqn3.2} which are far from the optimal solution of the original problem, as illustrated by Figures~\ref{fig:Synth-bad-gamma},~\ref{fig:jazz-bad-gamma}, and~\ref{fig:GOT-bad-gamma}. In this case, we cannot extract the densest $m\times n$-submatrix from these optimal solutions of~\eqref{eqn3.2} by rounding.

\begin{figure}[t]
    \centering
    
    \subfloat[$\gamma = 0.005$]{
        \includegraphics[width=0.31\linewidth]{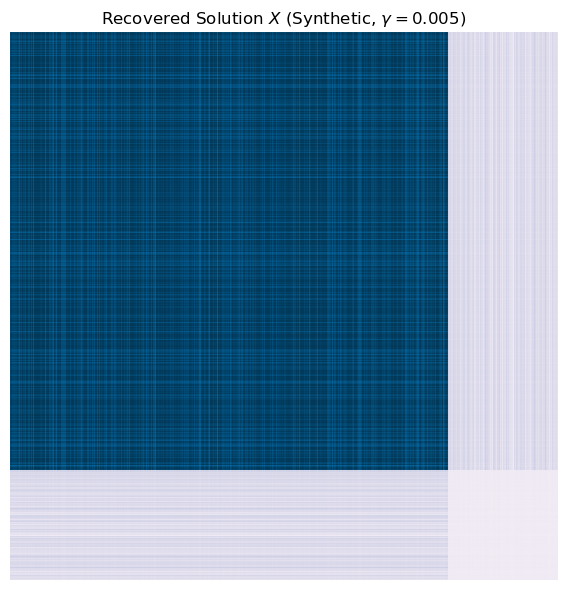}        
        \label{fig:synth-round-gamma}
    }  
    \subfloat[$\gamma=0.03$]{
        \includegraphics[width=0.31\linewidth]{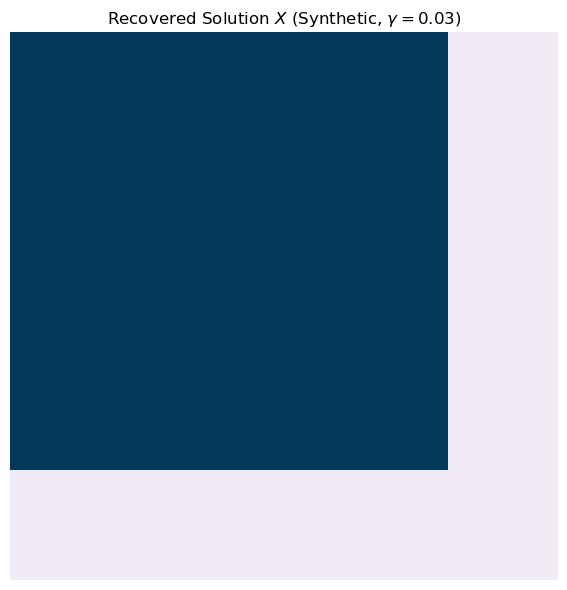}
        \label{fig:Synth-goood}
    }
    %
    \subfloat[$\gamma = 0.15$]{
        \includegraphics[width=0.31\linewidth]{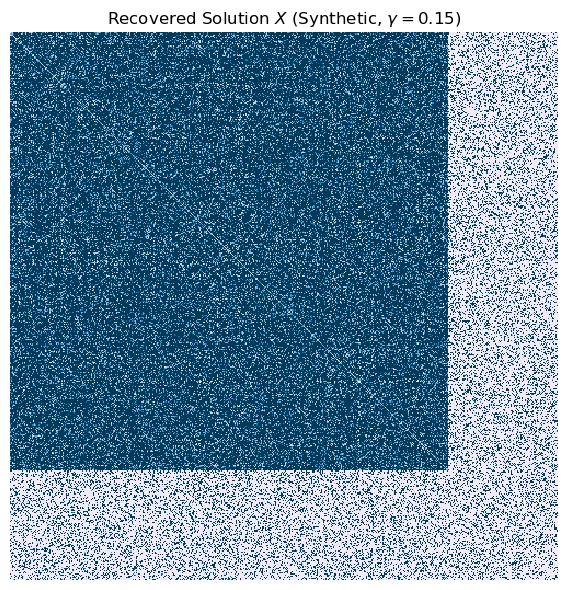}
        \label{fig:Synth-bad-gamma}
    }
    \caption{Recovered solutions for a random matrix sampled from the planted submatrix model from Sect.~\ref{sec:gamma-synth} for different choices of $\gamma$.}
    \label{fig:Synth-rounding}
\end{figure}

\begin{figure}[t]
    \centering
    \subfloat[$\gamma = 0.008$]{
        \includegraphics[width=0.31\linewidth]{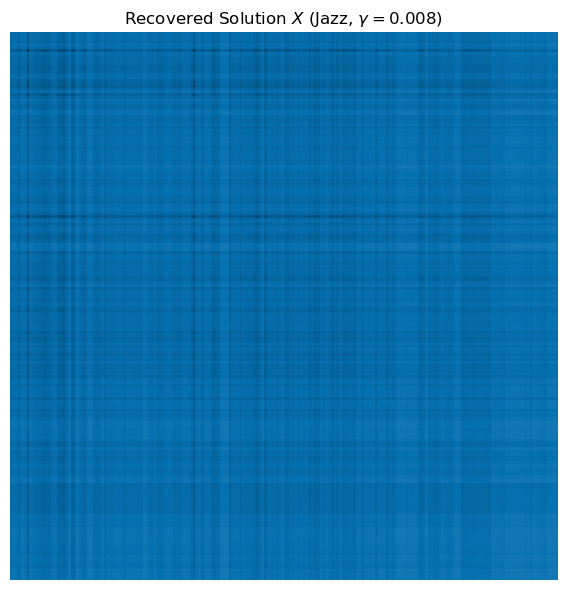}
        \label{fig:jazz-bad-gamma}
        }    
    \subfloat[$\gamma = 0.048$]{
        \includegraphics[width=0.31\linewidth]{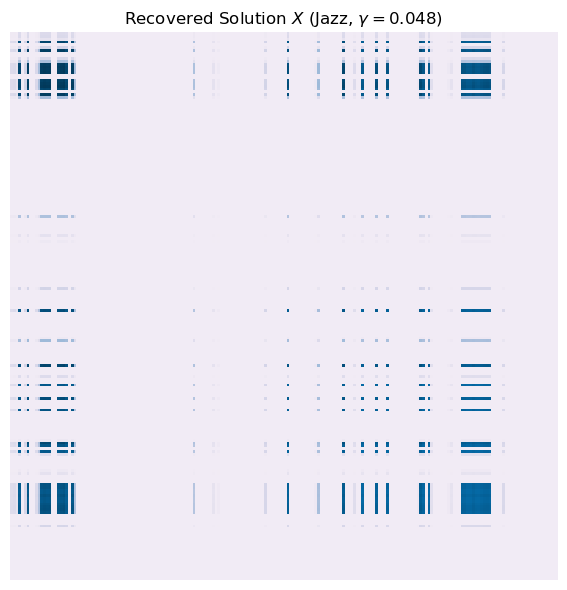}        
        \label{fig:gamma-jazz-round}
        }    
    \subfloat[$\gamma=0.4$]{
        \includegraphics[width=0.31\linewidth]{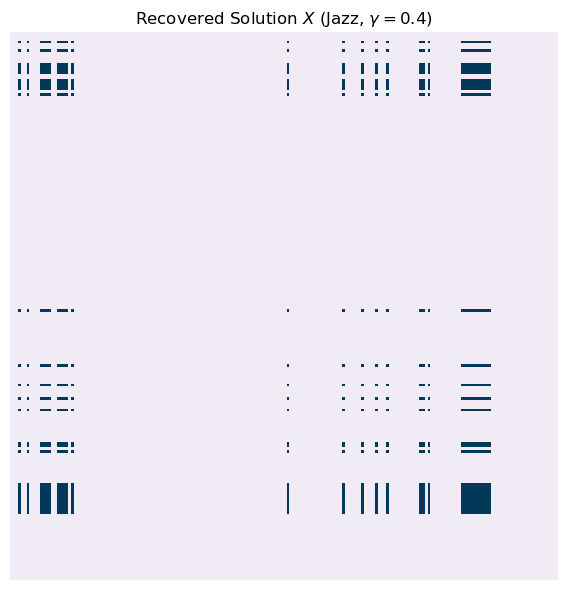}
        \label{fig:gamma-jazz-goood}
    }
    \caption{Recovered solutions for the JAZZ Network for different choices of $\gamma$.}
    \label{fig:Jazz-rounding}
\end{figure}

\begin{figure}[t]
    \centering
    \subfloat[$\gamma = 0.008$]{
        \includegraphics[width=0.31\linewidth]{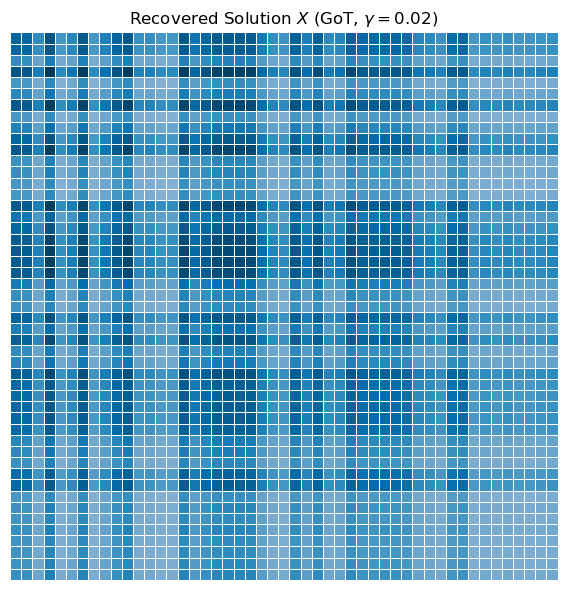}
        \label{fig:GOT-bad-gamma}
        }    
    \subfloat[$\gamma = 0.048$]{
        \includegraphics[width=0.31\linewidth]{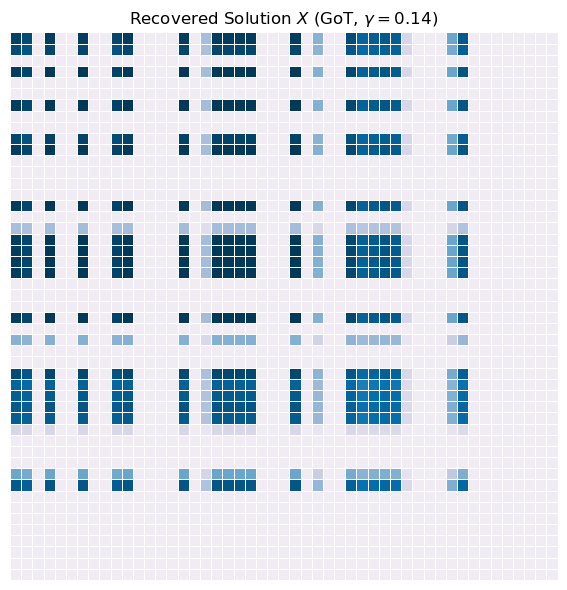}        
        \label{fig:gamma-GOT-round}
        }    
    \subfloat[$\gamma=0.4$]{
        \includegraphics[width=0.31\linewidth]{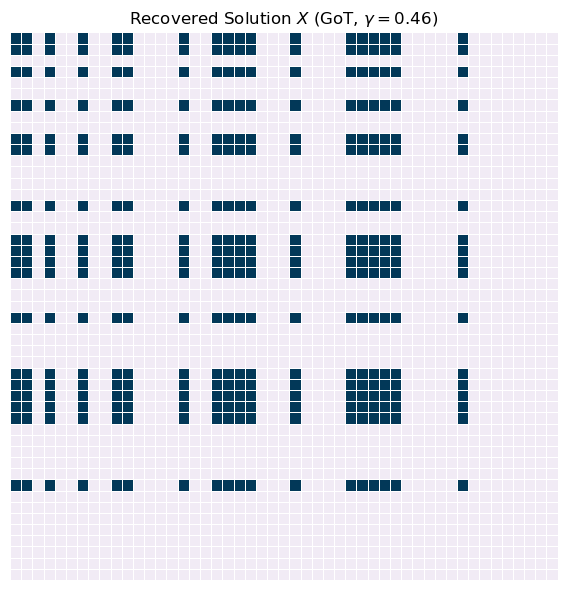}
        \label{fig:gamma-GOT-good}
    }
    \caption{Recovered solutions for the \textit{ASOIAF} Book 1 Network for different choices of $\gamma$.}
    \label{fig:GoT-rounding}
\end{figure}

\section{Conclusions}

We have extended existing convex relaxation approaches for the densest submatrix and planted clique problems to a significantly more general and realistic setting involving multiple dense substructures. By analyzing both probabilistic and adversarial models, we established new sufficient conditions guaranteeing perfect recovery of a planted submatrix in polynomial time. These conditions reveal the precise signal-to-noise trade-offs that govern the phase transition from partial to exact recovery. Further, our results generalize and strengthen several existing recovery guarantees for planted cliques, bicliques, and dense subgraphs as special cases.

Our results demonstrate that nuclear-norm-based convex formulations can effectively identify planted dense blocks even in heterogeneous stochastic block models and in the presence of structured adversarial noise. The accompanying dual-certificate analysis provides rigorous theoretical justification for these recovery guarantees. Empirical experiments conducted on both synthetic and real-world networks substantiate the theoretical predictions, demonstrating the sharpness of the proposed phase transitions and the robustness of the convex relaxation under heterogeneous noise.

The results of our empirical analysis suggest several areas for future research. Of particular interest is the need to strengthen our sufficient conditions for perfect recovery for the case where we have several planted dense blocks of the same size and similar density, e.g., when graphs contain several maximum cliques. We have observed that our nuclear-norm-based approach often recovers a convex combination of the multiple optimal solutions, but we have yet to identify a theoretical phase transition ensuring perfect recovery in this case; this would answer the question posed by Conjecture~\ref{conj:multiple-bicliques-conjecture}.

Moreover, it is necessary to develop scalable algorithmic implementations tailored for large-scale network data. The ADMM approach considered in Section~\ref{sec:ADMM} is considerably more efficient than classical interior point methods for solving our convex relaxation, but still rely on a computationally expensive singular value decomposition, which limits the scale of problems that we can solve in practice. Scalable first-order methods that avoid this SVD step or efficient interior-point algorithms exploiting the sparse structure of the constraints are needed to obtain scalable algorithms for this problem.

\subsection*{Acknowledgements}

The authors acknowledge the use of the IRIDIS High Performance Computing Facility and associated support services at the University of Southampton in the completion of this work.
B.~Ames was supported in part by the National Science Foundation Grants \#20212554 and \#2108645, as well as the University of Alabama Research Grants RG14678 and RG14838.

\bibliographystyle{abbrv}
\bibliography{refs}

@inproceedings{bombina2020convex,
  title={Convex optimization for the densest subgraph and densest submatrix problems},
  author={Bombina, Polina and Ames, Brendan},
  booktitle={SN Operations Research Forum},
  volume={1},
  number={3},
  pages={1--24},
  year={2020},
  organization={Springer}
}

@article{ames2015guaranteed,
  title={Guaranteed recovery of planted cliques and dense subgraphs by convex relaxation},
  author={Ames, Brendan PW},
  journal={Journal of Optimization Theory and Applications},
  volume={167},
  number={2},
  pages={653--675},
  year={2015},
  publisher={Springer}
}

@book{van1996matrix,
  title={Matrix Computations},
  author={Golub, Gene H and Van Loan, Charles F},
  year={2013},
  publisher={JHU Press}
}

@book{boucheron2013concentration,
  title={Concentration inequalities: A nonasymptotic theory of independence},
  author={Boucheron, St{\'e}phane and Lugosi, G{\'a}bor and Massart, Pascal},
  year={2013},
  publisher={Oxford university press}
}

@article{van2017dimension,
  title={The dimension-free structure of nonhomogeneous random matrices},
  author={Lata{\l}a, Rafa{\l} and van Handel, Ramon and Youssef, Pierre},
  journal={Inventiones mathematicae},
  volume={214},
  number={3},
  pages={1031--1080},
  year={2018},
  publisher={Springer}
}

@article{tropp2012user,
  title={User-friendly tail bounds for sums of random matrices},
  author={Tropp, Joel A},
  journal={Foundations of computational mathematics},
  volume={12},
  pages={389--434},
  year={2012},
  publisher={Springer}
}

@article{deng2011generalization,
  title={A generalization of the {Sherman-Morrison-Woodbury} formula},
  author={Deng, Chun Yuan},
  journal={Applied Mathematics Letters},
  volume={24},
  number={9},
  pages={1561--1564},
  year={2011},
  publisher={Elsevier}
}

@incollection{karp2009reducibility,
  title={Reducibility among combinatorial problems},
  author={Karp, Richard M},
  booktitle={50 Years of Integer Programming 1958-2008: from the Early Years to the State-of-the-Art},
  pages={219--241},
  year={2009},
  publisher={Springer}
}

@article{alon2011inapproximability,
  title={Inapproximability of densest $\kappa$-subgraph from average case hardness},
  author={Alon, Noga and Arora, Sanjeev and Manokaran, Rajsekar and Moshkovitz, Dana and Weinstein, Omri},
  journal={Unpublished manuscript},
  volume={1},
  pages={6},
  year={2011}
}

@inproceedings{feige2002relations,
  title={Relations between average case complexity and approximation complexity},
  author={Feige, Uriel},
  booktitle={Proceedings of the thiry-fourth annual ACM symposium on Theory of computing},
  pages={534--543},
  year={2002}
}

@article{feige2000finding,
  title={Finding and certifying a large hidden clique in a semirandom graph},
  author={Feige, Uriel and Krauthgamer, Robert},
  journal={Random Structures \& Algorithms},
  volume={16},
  number={2},
  pages={195--208},
  year={2000},
  publisher={Wiley Online Library}
}

@article{alon1998finding,
  title={Finding a large hidden clique in a random graph},
  author={Alon, Noga and Krivelevich, Michael and Sudakov, Benny},
  journal={Random Structures \& Algorithms},
  volume={13},
  number={3-4},
  pages={457--466},
  year={1998},
  publisher={Wiley Online Library}
}

@article{khot2006ruling,
  title={Ruling out {PTA}S for graph min-bisection, dense $k$-subgraph, and bipartite clique},
  author={Khot, Subhash},
  journal={SIAM Journal on Computing},
  volume={36},
  number={4},
  pages={1025--1071},
  year={2006},
  publisher={SIAM}
}

@article{andersen2007finding,
  title={Finding large and small dense subgraphs},
  author={Andersen, Reid},
  journal={arXiv preprint cs/0702032},
  year={2007}
}

@inproceedings{malod2010maximum,
  title={Maximum cliques in protein structure comparison},
  author={Malod-Dognin, No{\"e}l and Andonov, Rumen and Yanev, Nicola},
  booktitle={Experimental Algorithms: 9th International Symposium, SEA 2010, Ischia Island, Naples, Italy, May 20-22, 2010. Proceedings 9},
  pages={106--117},
  year={2010},
  organization={Springer}
}

@article{gomez2008identification,
  title={Identification of a 5-protein biomarker molecular signature for predicting {Alzheimer}'s disease},
  author={G{\'o}mez Ravetti, Mart{\'\i}n and Moscato, Pablo},
  journal={PloS {O}ne},
  volume={3},
  number={9},
  pages={e3111},
  year={2008},
  publisher={Public Library of Science San Francisco, USA}
}

@article{boginski2006mining,
  title={Mining market data: A network approach},
  author={Boginski, Vladimir and Butenko, Sergiy and Pardalos, Panos M},
  journal={Computers \& Operations Research},
  volume={33},
  number={11},
  pages={3171--3184},
  year={2006},
  publisher={Elsevier}
}

@article{balasundaram2011clique,
  title={Clique relaxations in social network analysis: The maximum k-plex problem},
  author={Balasundaram, Balabhaskar and Butenko, Sergiy and Hicks, Illya V},
  journal={Operations Research},
  volume={59},
  number={1},
  pages={133--142},
  year={2011},
  publisher={INFORMS}
}

@incollection{pattillo2011clique,
  title={Clique relaxation models in social network analysis},
  author={Pattillo, Jeffrey and Youssef, Nataly and Butenko, Sergiy},
  booktitle={Handbook of Optimization in Complex Networks: Communication and Social Networks},
  pages={143--162},
  year={2011},
  publisher={Springer}
}

@inproceedings{jain2003impact,
  title={Impact of interference on multi-hop wireless network performance},
  author={Jain, Kamal and Padhye, Jitendra and Padmanabhan, Venkata N and Qiu, Lili},
  booktitle={Proceedings of the 9th annual international conference on Mobile computing and networking},
  pages={66--80},
  year={2003}
}

@article{schaeffer2007graph,
  title={Graph clustering},
  author={Schaeffer, Satu Elisa},
  journal={Computer Science Review},
  volume={1},
  number={1},
  pages={27--64},
  year={2007},
  publisher={Elsevier}
}

@article{boyd2004convex,
  title={Convex pptimization},
  author={Boyd, Stephen},
  journal={Cambridge UP},
  year={2004}
}

@article{ames2011nuclear,
  title={Nuclear norm minimization for the planted clique and biclique problems},
  author={Ames, Brendan PW and Vavasis, Stephen A},
  journal={Mathematical programming},
  volume={129},
  number={1},
  pages={69--89},
  year={2011},
  publisher={Springer}
}

@article{bandeira2016sharp,
  title={Sharp nonasymptotic bounds on the norm of random matrices with independent entries},
  author={Bandeira, Afonso S and Van Handel, Ramon},
  year={2016}
}

@inproceedings{chen2014statistical,
  title={Statistical-computational phase transitions in planted models: The high-dimensional setting},
  author={Chen, Yudong and Xu, Jiaming},
  booktitle={International conference on machine learning},
  pages={244--252},
  year={2014},
  organization={PMLR}
}

@article{chandrasekaran2011rank,
  title={Rank-sparsity incoherence for matrix decomposition},
  author={Chandrasekaran, Venkat and Sanghavi, Sujay and Parrilo, Pablo A and Willsky, Alan S},
  journal={SIAM Journal on Optimization},
  volume={21},
  number={2},
  pages={572--596},
  year={2011},
  publisher={SIAM}
}

@article{candes2011robust,
  title={Robust principal component analysis?},
  author={Cand{\`e}s, Emmanuel J and Li, Xiaodong and Ma, Yi and Wright, John},
  journal={Journal of the ACM (JACM)},
  volume={58},
  number={3},
  pages={1--37},
  year={2011},
  publisher={ACM New York, NY, USA}
}

@article{doan2013finding,
  title={Finding Approximately Rank-One Submatrices with the Nuclear Norm and $\ell_1$-Norm},
  author={Doan, Xuan Vinh and Vavasis, Stephen},
  journal={SIAM Journal on Optimization},
  volume={23},
  number={4},
  pages={2502--2540},
  year={2013},
  publisher={SIAM}
}

@article{recht2010guaranteed,
  title={Guaranteed minimum-rank solutions of linear matrix equations via nuclear norm minimization},
  author={Recht, Benjamin and Fazel, Maryam and Parrilo, Pablo A},
  journal={SIAM review},
  volume={52},
  number={3},
  pages={471--501},
  year={2010},
  publisher={SIAM}
}

@article{ames2014guaranteed,
  title={Guaranteed clustering and biclustering via semidefinite programming},
  author={Ames, Brendan PW},
  journal={Mathematical Programming},
  volume={147},
  number={1},
  pages={429--465},
  year={2014},
  publisher={Springer}
}

@article{sudoso2025semidefinite,
  title={A semidefinite programming-based branch-and-cut algorithm for biclustering},
  author={Sudoso, Antonio M},
  journal={INFORMS Journal on Computing},
  volume={37},
  number={6},
  pages={1433--1456},
  year={2025},
  publisher={INFORMS}
}

@article{boyd2011distributed,
  title={Distributed optimization and statistical learning via the alternating direction method of multipliers},
  author={Boyd, Stephen and Parikh, Neal and Chu, Eric and Peleato, Borja and Eckstein, Jonathan and others},
  journal={Foundations and Trends{\textregistered} in Machine learning},
  volume={3},
  number={1},
  pages={1--122},
  year={2011},
  publisher={Now Publishers, Inc.}
}

@article{hong2017linear,
  title={On the linear convergence of the alternating direction method of multipliers},
  author={Hong, Mingyi and Luo, Zhi-Quan},
  journal={Mathematical Programming},
  volume={162},
  number={1},
  pages={165--199},
  year={2017},
  publisher={Springer}
}

@misc{abbe2023communitydetectionstochasticblock,
      title={Community Detection and Stochastic Block Models}, 
      author={Emmanuel Abbe},
      year={2023},
      eprint={1703.10146},
      archivePrefix={arXiv},
      primaryClass={math.PR},
      url={https://arxiv.org/abs/1703.10146}, 
}

@article{zachary1977information,
  title={An information flow model for conflict and fission in small groups},
  author={Zachary, Wayne W},
  journal={Journal of anthropological research},
  volume={33},
  number={4},
  pages={452--473},
  year={1977},
  publisher={University of New Mexico}
}

@book{knuth1993stanford,
  title={The Stanford GraphBase: a platform for combinatorial computing},
  author={Knuth, Donald Ervin},
  volume={1},
  year={1993},
  publisher={AcM Press New York}
}

@article{lusseau2003bottlenose,
  title={The bottlenose dolphin community of doubtful sound features a large proportion of long-lasting associations: can geographic isolation explain this unique trait?},
  author={Lusseau, David and Schneider, Karsten and Boisseau, Oliver J and Haase, Patti and Slooten, Elisabeth and Dawson, Steve M},
  journal={Behavioral ecology and sociobiology},
  volume={54},
  number={4},
  pages={396--405},
  year={2003},
  publisher={Springer}
}

@article{gleiser2003community,
  title={Community structure in jazz},
  author={Gleiser, Pablo M and Danon, Leon},
  journal={Advances in complex systems},
  volume={6},
  number={04},
  pages={565--573},
  year={2003},
  publisher={World Scientific}
}

@article{bron1973algorithm,
  title={Algorithm 457: finding all cliques of an undirected graph},
  author={Bron, Coen and Kerbosch, Joep},
  journal={Communications of the ACM},
  volume={16},
  number={9},
  pages={575--577},
  year={1973},
  publisher={ACM New York, NY, USA}
}

@article{tomita2006worst,
  title={The worst-case time complexity for generating all maximal cliques and computational experiments},
  author={Tomita, Etsuji and Tanaka, Akira and Takahashi, Haruhisa},
  journal={Theoretical computer science},
  volume={363},
  number={1},
  pages={28--42},
  year={2006},
  publisher={Elsevier}
}

@article{cazals2008note,
  title={A note on the problem of reporting maximal cliques},
  author={Cazals, Fr{\'e}d{\'e}ric and Karande, Chinmay},
  journal={Theoretical computer science},
  volume={407},
  number={1-3},
  pages={564--568},
  year={2008},
  publisher={Elsevier}
}

@article{beveridge2016network,
  title={Network of {Thrones}},
  author={Beveridge, Andrew and Shan, Jie},
  journal={Math Horizons},
  volume={23},
  number={4},
  pages={18--22},
  year={2016},
  publisher={Taylor \& Francis},
    url={https://github.com/mathbeveridge/mathbeveridge.github.io/blob/master/files/NetworkofThrones.pdf}
}

@misc{networkofthronesNetworkThrones,
	author = {},
	title = {{N}etwork of {T}hrones --- networkofthrones.com},
	howpublished = {\url{https://networkofthrones.com/}},
	year = {},
	note = {[Accessed 24-10-2025]},
}

@incollection{beveridge2018game,
  title={The game of {Game of Thrones}: Networked concordances and fractal dramaturgy},
  author={Beveridge, Andrew and Chemers, Michael},
  booktitle={Reading Contemporary Serial Television Universes},
  pages={201--225},
  year={2018},
  publisher={Routledge}
}

@inproceedings{sohn2025sharp,
  title={Sharp phase transitions in estimation with low-degree polynomials},
  author={Sohn, Youngtak and Wein, Alexander S},
  booktitle={Proceedings of the 57th Annual ACM Symposium on Theory of Computing},
  pages={891--902},
  year={2025}
}

@article{dadon2024detection,
  title={Detection and recovery of hidden submatrices},
  author={Dadon, Marom and Huleihel, Wasim and Bendory, Tamir},
  journal={IEEE Transactions on Signal and Information Processing over Networks},
  volume={10},
  pages={69--82},
  year={2024},
  publisher={IEEE}
}

@article{omer2023harnessing,
  title={Harnessing the mathematics of matrix decomposition to solve planted and maximum clique problem},
  author={Omer, Salma and Ali, Montaz},
  journal={arXiv preprint arXiv:2307.09022},
  year={2023}
}

@article{omer2025maximum,
  title={Maximum edge bi-clique via matrix decomposition},
  author={Omer, Salma and Ali, Montaz},
  journal={Journal of Industrial and Management Optimization},
  volume={21},
  number={11},
  pages={6270--6294},
  year={2025},
  publisher={Journal of Industrial and Management Optimization}
}

@article{lanciano2024survey,
  title={A survey on the densest subgraph problem and its variants},
  author={Lanciano, Tommaso and Miyauchi, Atsushi and Fazzone, Adriano and Bonchi, Francesco},
  journal={ACM Computing Surveys},
  volume={56},
  number={8},
  pages={1--40},
  year={2024},
  publisher={ACM New York, NY}
}

\begin{appendix}

\section{Proof of Lemma~\ref{lem4.4}}
\label{app:MBI}

Lemma~\ref{lem4.4} is a special case of the Matrix Bernstein Inequality applied to $\boldsymbol{Z} =    {\T} - \tilde{\T}$ as found in the hypothesis of Lemma~\ref{lem4.4}.
The Matrix Bernstein Inequality, as presented in \cite{tropp2012user}, is provided below:

\renewcommand{\S}{\boldsymbol{S}}
\begin{theorem}
    Let $\{\S_k\}$ be a finite sequence of independent real $d_{1} \times d_{2}$ random matrices such that 
    $\textbf{E}[\S_{k}]=\boldsymbol{0}$ and $\lVert \S_{k} \rVert \le L$ for all $k$. 
    Introduce the sum $\Z := \sum_{k} \S_{k}$ and let $\nu (\Z)$ denote the variance of the sum:\\
    \begin{align} \label{eq: nu Z def}
        \nu (\Z) = \max\{ \lVert \mathbf{E}[\Z\Z^T] \rVert, \lVert \mathbf{E}[\Z^T\Z] \rVert\} 
        &= \max \left\{ \left\lVert \sum_{k} \mathbf{E}[\S_{k}\S_{k}^T] \right\rVert, 
                        \left\lVert \sum_{k} \mathbf{E}[\S_{k}^{T}\S_{k}] \right\rVert
                    \right\}.
    \end{align}
    Then, for all $t \ge 0$,
    \begin{align} \label{brninq}
     \prob(\lVert \Z \rVert \ge t) 
        &\le  (d_{1} + d_{2}) \exp \left(\frac{-t^{2}/2}{\nu (\Z) + Lt/3}\right). 
    \end{align}
\end{theorem}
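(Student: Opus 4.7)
The plan is to apply the Matrix Bernstein inequality stated at the start of Appendix~\ref{app:MBI} to the column-wise decomposition of $\Z := \T - \tilde{\T}$. A direct calculation from the definitions shows that for $i \in [n]$ and $j \in V_s$,
\[
    [\Z]_{ij} = \alpha_j\,\mathbb{I}[\Theta_{ij}\neq 1],
    \qquad
    \alpha_j := \frac{n_j}{n-n_j} - \frac{p_s}{1-p_s} = \frac{n_j - n p_s}{(n-n_j)(1-p_s)}.
\]
Let $\S_j$ denote the $n \times N$ matrix agreeing with $\Z$ on column $j$ and zero elsewhere, so $\Z = \sum_{j=1}^N \S_j$; since $\S_j$ depends only on the $j$th column of $\T$, the summands are independent, and a short computation using $\mathbb{E}[n_j]=n p_s$ together with the tower property gives $\mathbb{E}[\S_j]=\bs{0}$.

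Next I establish a uniform bound $\|\S_j\|\le L$. Since the $j$th column of $\Z$ has $n-n_j$ equal entries $\alpha_j$,
\[
    \|\S_j\| = |\alpha_j|\sqrt{n-n_j} = \frac{|n_j - n p_s|}{(1-p_s)\sqrt{n-n_j}}.
\]
Applying Lemma~\ref{lem4a} with $t=N$ and a union bound over $j$ gives an event $\mathcal{E}$ of probability $1-\O(N^{-5})$ on which $|n_j - n p_{s(j)}| = \O(\sqrt{n p_{s(j)}(1-p_{s(j)})\log N} + \log N)$ and $n - n_j \ge n(1-p_{s(j)})/2$ simultaneously for every $j$; on $\mathcal{E}$ this yields $\|\S_j\| = \O(\sqrt{\hat\sigma^2 \log N}) =: L$ uniformly in $j$. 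Truncating each $\S_j$ to $\mathcal{E}$ preserves independence and mean-zeroness up to an additive error that is itself $\O(N^{-5})$ in probability.

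The main bookkeeping step is the matrix-variance statistic $\nu(\Z) = \max\{\|\sum_j \mathbb{E}[\S_j\S_j^\top]\|,\,\|\sum_j\mathbb{E}[\S_j^\top\S_j]\|\}$. Letting $\v_j \in \{0,1\}^n$ be the indicator of the zero-positions of column $j$ of $\T$ and conditioning on $n_j$, the uniform-random placement of those zeros yields
\[
    \mathbb{E}[\v_j\v_j^\top \mid n_j] = \frac{n_j(n-n_j)}{n(n-1)}\I + \frac{(n-n_j)(n-n_j-1)}{n(n-1)}\e\e^\top.
\]
Taking outer expectation over $n_j$ and summing over $j=1,\ldots,N$, the $\e\e^\top$ component provides the dominant contribution of spectral norm $\O(N\hat\sigma^2)$ (tracing back to $\Var(n_j)/(n(1-p_s)^2) = \hat\sigma^2/(1-p_s)$ combined with $\|\e\e^\top\|=n$ and the $1/(n(n-1))$ prefactor), while the $\I$ component contributes $\O(N\hat\sigma^2/n)$; an analogous computation for $\mathbb{E}[\S_j^\top\S_j]$, which reduces to a single diagonal entry $\mathbb{E}[\alpha_j^2(n-n_j)]$, gives $\|\sum_j\mathbb{E}[\S_j^\top\S_j]\|=\O(\hat\sigma^2)$. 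Hence $\nu(\Z) = \O(N\hat\sigma^2)$.

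Substituting these bounds into~\eqref{brninq} with $t = c\max\{\sqrt{\hat\sigma^2 N\log N},(\log N)^{3/2}\}$, in the first regime of the maximum $t^2/\nu = \Omega(c^2 \log N)$ dominates the Bernstein denominator, and in the second $t/L = \Omega(c\log N)$ dominates, so the Bernstein exponent is at least $7\log N$ for sufficiently large absolute constant $c$. The overall tail probability is therefore $\O((n+N)N^{-7}) = \O(N^{-5})$ after union bounding with $\Pr(\mathcal{E}^c)$. The hard part will be the variance calculation of the third paragraph, where the coupling between $\alpha_j$ and the indicator $\v_j$ (both functions of the $j$th column of $\T$) must be unwound by conditioning on $n_j$ to extract the clean $\I$-plus-$\e\e^\top$ covariance structure; once that decomposition is in hand, the rest of the argument is essentially an accounting exercise.
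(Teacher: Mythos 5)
There is a fundamental mismatch between the statement you were asked to prove and what your proposal argues. The statement is the Matrix Bernstein inequality itself: a tail bound for the spectral norm of a sum of arbitrary independent, mean-zero, uniformly bounded $d_1\times d_2$ random matrices. Your very first step is to \emph{apply} ``the Matrix Bernstein inequality stated at the start of Appendix~\ref{app:MBI}'' to the particular matrix $\Z=\T-\tilde{\T}$, so as a proof of the stated theorem the proposal is circular, and it contains none of the ingredients such a proof actually requires: the matrix Laplace transform method, the moment-generating-function bound for a bounded mean-zero random matrix, subadditivity of matrix cumulant generating functions via Lieb's concavity theorem (or a Golden--Thompson-type argument), and the Hermitian dilation that reduces the rectangular case to the self-adjoint one. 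Note that the paper does not prove this theorem either; it imports it verbatim from Tropp~\cite{tropp2012user}, which is where that argument lives, so the honest options are to reproduce Tropp's proof or to present the statement explicitly as a cited result.

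What you have actually written is, in substance, a reconstruction of the \emph{application} of the theorem, i.e., of the proof of Lemma~\ref{lem4.4}: the column decomposition $\Z=\sum_j \S_j$ as in~\eqref{rf}, a high-probability bound $\|\S_j\|\le L=\O\bigl(\sqrt{\max\{\hat\sigma^2,1\}\log N}\bigr)$ paralleling Lemma~\ref{lem: L bound}, a variance bound $\nu(\Z)=\O(\hat\sigma^2 N)$ paralleling Lemma~\ref{lem: nu Z}, and substitution into~\eqref{brninq} in the two regimes of $t$. That part tracks the paper's appendix fairly closely; your conditional covariance computation (the identity-plus-rank-one structure of $\mathbf{E}[\v_j\v_j^T\mid n_j]$) is a more granular route to the variance bound than the paper's direct estimate of $\mathbf{E}\lVert\boldsymbol{d}_j\rVert^2$, and even there you would need to treat carefully the fact that $L$ and the lower bound on $n-n_j$ hold only on a high-probability event, which requires a genuine truncation or conditioning argument rather than the one-sentence remark you give, since Bernstein's hypothesis demands an almost-sure bound $\|\S_k\|\le L$. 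But none of this addresses the theorem in question.
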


\renewcommand{\d}{\boldsymbol{d}}
\renewcommand{\a}{\boldsymbol{a}}
\newcommand{\at}{\boldsymbol{\tilde{a}}}
Consider the matrix
\begin{align}
    \boldsymbol{S}_{j} := \boldsymbol{d}_j\boldsymbol{e}_j^T \label{rf},
\end{align}
where $\d_j := \a_j - \at_j$ is the difference of the $j$th columns $\a_j$, $\at_j$ of $\T$ and $\tilde\T$, respectively, and $\e_j$ denotes the $j$th column of the identity matrix.
Clearly $\Z = \T -\tilde\T = \sum_j \S_j$.
We next calculate $L$ and $\nu(\boldsymbol{Z})$ to substitute in~\eqref{brninq}. 
We start with a bound on $L$.

We start with the following lemma, which bounds $\|\S_j\|$ with high probability.

\begin{lemma} \label{lem: L bound}
    There exist $c_{1} \ge 0$ such that matrices $\boldsymbol{S}_{j}$ defined by \eqref{rf} satisfy
    \begin{align} \label{eq: L bound}
    \|\boldsymbol{S}_j \| 
            \le L := 
            c_1 
            \sqrt{
            \max \{ \tilde \sigma_*^2, 1 \} \log N 
            }
    \end{align}
    for all $j=1,2,\dots N$ with high probability,
    where 
    \begin{equation} \label{eq: sigma star def}
        \tilde \sigma_*^2 := \max_{s=1,2,\dots,k} \tilde \sigma_{s}^2 = \max_{s=1,2,\dots,k} \frac{p_{s}}{1 - p_{s}}.
    \end{equation}    
\end{lemma}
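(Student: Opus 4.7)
The plan is to exploit the rank-one structure of $\boldsymbol{S}_j = \boldsymbol{d}_j \boldsymbol{e}_j^T$ to reduce the spectral-norm bound to a scalar concentration inequality on the column sums of $\T$. Since $\boldsymbol{S}_j$ has rank one, $\|\boldsymbol{S}_j\| = \|\boldsymbol{d}_j\|_2 \cdot \|\boldsymbol{e}_j\|_2 = \|\boldsymbol{d}_j\|_2$, so it suffices to bound $\|\boldsymbol{d}_j\|_2$ uniformly over $j \in [N]$.

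The first step is to identify the nonzero entries of $\boldsymbol{d}_j = \boldsymbol{a}_j - \tilde{\boldsymbol{a}}_j$. Whenever $\Theta_{ij} = 1$ we also have $[\tilde\T]_{ij} = 1$, so $d_{ij} = 0$; at the remaining $n - n_j$ positions, a short calculation shows that every entry of $\boldsymbol{d}_j$ takes the common value
$$d_{ij} = \frac{n_j}{n - n_j} - \frac{p_s}{1 - p_s} = \frac{n_j - n p_s}{(n - n_j)(1 - p_s)}.$$
Consequently,
$$\|\boldsymbol{d}_j\|_2^2 = (n - n_j)\, d_{ij}^2 = \frac{(n_j - n p_s)^2}{(n - n_j)(1 - p_s)^2},$$
which replaces the spectral-norm problem with control over the scalar binomial deviation $n_j - n p_s$ together with a lower bound on the denominator $n - n_j$.

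Both ingredients follow from a single application of Bernstein's inequality (Lemma~\ref{lem4a}) to $n_j = \sum_{i=1}^n \mathbf{1}\{\Theta_{ij} = 1\}$. Taking $t = N$ yields
$$|n_j - n p_s| \le 6\max\{\sqrt{p_s(1-p_s) n \log N},\ \log N\}$$
with probability at least $1 - 2N^{-6}$, and since $p_s \le 1/2$ the same event implies $n - n_j \ge (1/2)(1-p_s) n$ whenever $n$ exceeds a constant multiple of $\log N$. A union bound over $j \in [N]$ upgrades these events to hold for every column simultaneously with probability $1 - \O(N^{-5})$.

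Substituting these bounds into the closed form for $\|\boldsymbol{d}_j\|_2^2$, using $(1-p_s)^2 \ge 1/4$, and invoking the elementary inequality $p_s(1-p_s) \le p_s/(1-p_s) = \tilde\sigma_s^2 \le \tilde\sigma_*^2$ yields
$$\|\boldsymbol{d}_j\|_2^2 \le c \max\left\{\tilde\sigma_*^2 \log N,\ \frac{(\log N)^2}{n}\right\}.$$
Under the ambient size assumption $n \ge \log N$, the second term is dominated by the first up to a constant, which gives the stated bound $\|\boldsymbol{S}_j\| \le c_1 \sqrt{\max\{\tilde\sigma_*^2, 1\}\, \log N}$. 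The only technical subtlety is controlling the random denominator $n - n_j$, but because the hypothesis $p_s \le 1/2$ forces $n_j$ to concentrate well below $n$ via the same Bernstein inequality, no separate argument is required.
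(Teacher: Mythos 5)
Your proposal is correct and follows essentially the same route as the paper: reduce $\|\boldsymbol{S}_j\|$ to $\|\boldsymbol{d}_j\|_2$ via the rank-one structure, derive the closed form $\|\boldsymbol{d}_j\|_2^2 = (n_j - np_s)^2/\bigl((n-n_j)(1-p_s)^2\bigr)$, control $n_j$ with Bernstein's inequality, and finish with a union bound over $j$. The only cosmetic difference is that you bound the numerator and denominator of this ratio directly, whereas the paper routes through a (strictly speaking unnecessary) convexity/monotonicity analysis of $\|\boldsymbol{d}_j\|_2^2$ as a function of $n_j$; both yield the same bound.
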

    
\begin{proof}
    Fix $j \in V_s$ for some $s=1,2,\dots, k$. 
    Then, 
    $$\lVert \boldsymbol{S}_{j}\rVert = \lVert \d_{j} \boldsymbol e^{T}_{j}\rVert
    = \lVert \d_{j} \rVert \lVert \boldsymbol e^{T}_{j}\rVert = \lVert \d_{j}\rVert.$$
    We have 
    \begin{align*}
        \lVert \d_{j} \lVert^{2} &\le(n - n_j) \left( -\frac{n_j}{n - n_j} + \frac{p_s}{1-p_s} \right)^{2} 
        = \frac{ (n_j - np_s)^{2}}{ (1 - p_s)^{2} (n - n_j)}.
    \end{align*}
    Note further that
    \begin{align*}
        \frac{d}{dn_j} \|\d_j\|^2 &= \frac{(n p_s - n_j)(n_j - n(2-p_s))}{(n - n_j)^2(1-p_s)^2}, \\
        \frac{d^2}{d^2n_j} \|\d_j\|^2 &= \frac{2n^2}{(n - n_j)^3}. 
    \end{align*}
    The second derivative is strictly positive for all $n_j < n$, which implies that $\|\d_j\|^2$ is strictly convex with respect to $n_j$ on the interval $[0, n)$. Moreover, $\|\d_j\|^2$ is strictly decreasing on $[0, p_sn)$
    and strictly increasing on $(p_sn, n)$. Thus, $\|\d_j\|^2$ is minimized at the stationary point $n_j = p_s n$ and is maximized at one of the two endpoints $a$ and $b$ when restricted to the interval $[a, b]$ for any $0 < a < p_s n < b < n$.
    
    Recall that applying the standard Bernstein inequality \eqref{eq4.5} shows that 
    $$
        p_s n - 6 \max \left\{\sqrt{\sigma_s^2 n \log N }, \log N \right\} < n_j 
        <
        p_s n -  6 \max \left\{\sqrt{\sigma_s^2 n \log N }, \log N \right\}
    $$
    with high probability.
    Substituting these endpoints as $n_j$ in the formula for $\|\d_j\|^2$ shows that
    if $n - n_j \ge \bar c > 0$ then
    \renewcommand{\O}{\mathcal{O}}
    \begin{align*}            
        \lVert \d_{j} \rVert^{2} 
        &\le 
        \frac{ 36 \max\{n \sigma_{p_s}^{2}\log{N}, (\log{N})^{2}\}}
        {(1 - p_s)^{2} (n - n_j ) }\\
        &\le 
        \frac{36 \bar c \max\{n\sigma_{p_s}^{2} \log{N}, (\log{N})^{2}\}}
        {(1 - p_s)^{2} n} \\ 
        &= \O \left(
            \max \{\tilde \sigma_s^2, 1 \} \log N
            \right)
            = 
            \O \left(
            \max \{\tilde \sigma_{*}^2, 1 \} \log N
            \right)
    \end{align*} 
    w.h.p.,
    where the final inequality follows from the assumption $\log N < n (1 - p_s)^2$, which implies that 
    \begin{equation*}
        \frac{(\log N)^2 }{n (1 - p_s)^2 } \le \log N.
    \end{equation*}
    The existence of such a $\bar c > 0$ is guaranteed w.h.p.~by the Bernstein inequality.
    Applying the union bound over all $j$ shows that 
    \begin{equation*}
        \|\boldsymbol{S}_j \| 
        \le L := 
        \O \left( 
            \sqrt{
            \max \{ \tilde \sigma_*^2, 1 \} \log N 
            }
        \right)                        
    \end{equation*}
    with high probability. 
\end{proof}

       
The next lemma provides a bound on $\nu(\boldsymbol{Z})$.

\begin{lemma}\label{lem: nu Z}
    The matrix $\boldsymbol{Z} = \sum_{j}\boldsymbol{S}_{j}$ defined by \eqref{rf} satisfies 
    $$ 
        \nu(\boldsymbol{Z}) \le \bar c \sum_{s=1}^k \tilde \sigma_s^2 n_{s} \le \bar c~\tilde\sigma_*^2N
    $$ 
    with high probability
    for any constant $\bar c>0$ satisfying $n - n_j  \ge \frac{n}{\bar{c}}$ for all $j$.        
\end{lemma}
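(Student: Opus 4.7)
The argument rests on the rank-one structure of each $\boldsymbol{S}_{j} = \boldsymbol{d}_{j}\boldsymbol{e}_{j}^{T}$. A direct computation shows $\boldsymbol{S}_{j}\boldsymbol{S}_{j}^{T} = \boldsymbol{d}_{j}\boldsymbol{d}_{j}^{T}$ (a rank-one PSD matrix) and $\boldsymbol{S}_{j}^{T}\boldsymbol{S}_{j} = \|\boldsymbol{d}_{j}\|^{2}\,\boldsymbol{e}_{j}\boldsymbol{e}_{j}^{T}$ (supported only on the $(j,j)$ entry). Hence $\sum_{j}\mathbf{E}[\boldsymbol{S}_{j}^{T}\boldsymbol{S}_{j}]$ is diagonal and its spectral norm equals $\max_{j}\mathbf{E}[\|\boldsymbol{d}_{j}\|^{2}]$, while the triangle inequality combined with the PSD bound $\|\boldsymbol{M}\|\le\operatorname{tr}\boldsymbol{M}$ gives
\[
\Bigl\|\sum_{j}\mathbf{E}[\boldsymbol{d}_{j}\boldsymbol{d}_{j}^{T}]\Bigr\| \le \sum_{j}\bigl\|\mathbf{E}[\boldsymbol{d}_{j}\boldsymbol{d}_{j}^{T}]\bigr\| \le \sum_{j}\mathbf{E}[\|\boldsymbol{d}_{j}\|^{2}].
\]
Thus controlling both summands in the definition~\eqref{eq: nu Z def} of $\nu(\boldsymbol{Z})$ reduces to bounding $\mathbf{E}[\|\boldsymbol{d}_{j}\|^{2}]$ uniformly in $j$ and summed over all $j$.

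The proof of Lemma~\ref{lem: L bound} already gives the explicit formula $\|\boldsymbol{d}_{j}\|^{2} = (n_{j} - p_{s}n)^{2}\big/\bigl((1-p_{s})^{2}(n-n_{j})\bigr)$ for $j\in V_{s}$. Under the assumption $n - n_{j} \ge n/\bar{c}$, this is at most $\bar{c}(n_{j}-p_{s}n)^{2}/((1-p_{s})^{2}n)$, and taking expectation against the binomial law of $n_{j}$ (mean $p_{s}n$, variance $p_{s}(1-p_{s})n$) will yield $\mathbf{E}[\|\boldsymbol{d}_{j}\|^{2}]\le\bar{c}\,p_{s}/(1-p_{s}) = \bar{c}\,\tilde{\sigma}_{s}^{2}$. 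Summing over $j$ and grouping by block then gives
\[
\Bigl\|\sum_{j}\mathbf{E}[\boldsymbol{d}_{j}\boldsymbol{d}_{j}^{T}]\Bigr\| \le \bar{c}\sum_{s=1}^{K}n_{s}\tilde{\sigma}_{s}^{2} \le \bar{c}\,\tilde{\sigma}_{*}^{2}\,N,
\]
which also dominates $\max_{j}\mathbf{E}[\|\boldsymbol{d}_{j}\|^{2}]\le\bar{c}\,\tilde{\sigma}_{*}^{2}$. Taking the maximum of the two terms delivers the claimed bound on $\nu(\boldsymbol{Z})$.

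\textbf{Main obstacle.} The principal subtlety is justifying the ``with high probability'' qualifier. The bound on $\|\boldsymbol{d}_{j}\|^{2}$ relies on $n - n_{j} \ge n/\bar{c}$ for every $j$; this prevents the singularity of $\|\boldsymbol{d}_{j}\|^{2}$ as $n_{j}\to n$ and holds simultaneously for all $j$ with high probability by the Bernstein tail bound of Lemma~\ref{lem4a} applied to each binomial $n_{j}$ (mean $p_{s}n < n$) together with a union bound, provided $\bar{c} > 1/(1-p_{s})$ for every block. A minor bookkeeping issue is that the variance identity used for $(n_{j}-p_{s}n)^{2}$ is the unconditional one, while the deterministic bound on $\|\boldsymbol{d}_{j}\|^{2}$ is only valid on the conditioning event; since that event has probability $1 - o(1)$ the resulting conditional variance differs from the unconditional one by a $1+o(1)$ factor, which can be absorbed by slightly enlarging $\bar{c}$.
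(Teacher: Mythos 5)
Your proposal is correct and follows essentially the same route as the paper: exploit the rank-one structure of each $\boldsymbol{S}_j$ so that $\mathbf{E}[\boldsymbol{Z}^T\boldsymbol{Z}]$ is diagonal with entries $\mathbf{E}\|\boldsymbol{d}_j\|^2$, bound $\|\mathbf{E}[\boldsymbol{Z}\boldsymbol{Z}^T]\|$ by $\sum_j \mathbf{E}\|\boldsymbol{d}_j\|^2$ via the triangle inequality, and control $\mathbf{E}\|\boldsymbol{d}_j\|^2 \le \bar c\,\tilde\sigma_s^2$ using the explicit formula, the event $n-n_j\ge n/\bar c$, and the binomial variance of $n_j$. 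Your remark about the conditional versus unconditional expectation is a legitimate point that the paper's own proof glosses over, and your fix (absorbing the $1+o(1)$ factor into $\bar c$) is the right one.
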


\renewcommand{\Z}{\boldsymbol{Z}}
\renewcommand{\e}{\boldsymbol{e}}

\renewcommand{\O}{\mathcal{O}}
    
\begin{proof}                                                                                                              
    Constructing upper bounds for both $\lVert \mathbf{E}[\Z\Z^{T}] \rVert$ and $\lVert \mathbf{E}[\Z^{T}\Z] \rVert$ suffices.  Recall that $\boldsymbol{Z} = \sum_{j}\boldsymbol{S}_{j} = \sum_j \d_j \e_j^T$.
    By our choice of $\boldsymbol{Z}$, we have 
    $$
        \boldsymbol{Z}^{T}\boldsymbol{Z} = \sum_{i=1}^N (\d_{i} \e_{i}^{T})^{T} \sum_{j=1}^N(\d_{j} \e_{j}^{T})  
        = \sum_{i=1}^N \sum_{j=1}^N \left(\d_{i}^{T} \d_{j} \right) \e_i \e_{j}^{T}.            
    $$            
    The independence of $\d_i$, $\d_j$ and the fact that $\textbf{E}[\d_j] = \boldsymbol 0$ for all $i,j$ implies that that $\mathbf{E}[\Z^T \Z]$ is a diagonal matrix with $j$th diagonal entry equal to $\textbf{E}\|\d_j\|^2$. 
    Consequently, $$\lVert \EX[\boldsymbol{Z}^{T} \boldsymbol{Z}] \rVert = \max_{j} \mathbf{E}\lVert \d_{j} \rVert^{2}.$$
    For each $j\in V_s$, we have
    \begin{align*}
        \mathbf{E}\lVert \d_{j} \rVert^{2} &= \mathbf{E}\left[ \frac{ (n_j - n p_s)^{2}}{(1 -  p_s)^{2} (n - n_j)}\right].
    \end{align*}                
    Recall that  $n - n_j \ge n/\bar{c}$ for some constant $\bar{c} > 0$ with high probability by the Bernstein bound.
    This implies that
    \begin{align}
        \mathbf{E}\lVert \d_{j} \rVert^{2} 
        & \le \bar{c}~\mathbf{E}  \left[\frac{(n_j - n p_s)^{2}}{(1 -  p_s)^{2} n } \right]
        = \frac{\bar{c} \Var(n_j)}{(1 -  p_s)^{2} n } \nonumber
        \\&= \frac{ \bar{c}~ p_s}{1 -  p_s} = \bar c~\tilde\sigma_s\label{eq:E ZTZ}
    \end{align}      
    with high probability, since $n_j$ is binomially distributed with variance $p_s(1-p_s) n$.
    
    On the other hand, the triangle inequality and Jensen's inequality imply that
    \begin{align}
        \lVert \mathbf{E}[\Z\Z^T] \rVert &\le \sum_{j \notin V_1} \| \EX [ \d_j \d_j^T] \| \nonumber\\ 
        &= \sum_{s=1}^k \sum_{j\in V_s} \| \EX [ \d_j \d_j^T] \| \nonumber\\ 
        &\le \sum_{s=1}^k \sum_{j\in V_s} \EX \|   \d_j  \|^2 \nonumber \\ 
        &\le \bar c \sum_{s=1}^k \left(\frac{p_s}{1 -  p_s} \right) n_{s} 
        = \bar c  \sum_{s=1}^k \tilde\sigma_s^2 n_s \label{eq: E ZZT}                     
    \end{align}
    with high probability.
    Combining \eqref{eq:E ZTZ} and \eqref{eq: E ZZT} yields        
    \[
        \nu(\boldsymbol{Z}) =  \max\Big\{\lVert \mathbf{E}[\Z^{T}\Z] \rVert , \lVert \mathbf{E}[\Z\Z^{T}] \rVert\Big\}
        = \bar c \sum_{s=1}^k \tilde{\sigma}_{s}^2 n_s \le \bar c ~ \tilde\sigma_*^2 N
    \]
    with high probability, where $\tilde\sigma^2_{*} = \max \tilde \sigma^2_{s}$. This completes the proof.        
\end{proof}

\newcommand{\ts}{\tilde\sigma^2}
\newcommand{\tb}{\max \left\{\sqrt{\tilde\sigma^2 N \log N}, (\log N)^{3/2} \right\}}
To complete the proof of Lemma~\ref{lem4.4}, we use this choice of $L$ and $\nu(\Z)$ in the matrix Bernstein inequality to bound the probability that $\|\Z\|$ exceeds
\begin{equation} \label{eq: t def}
    t := c \max \left\{
        \sqrt{\tilde\sigma^2 N \log N}, (\log N)^{3/2} \right\}.
\end{equation}
To do so, We'll examine various cases.

First, suppose that $(\log N)^{3/2} \le \sqrt{\tilde\sigma^2 N \log N}$ so that 
$$
    t = \sqrt{\tilde\sigma^2 N \log N}.
$$
Substituting the values of $L$ and $\nu(\Z)$ from Lemma~\ref{lem: L bound} and Lemma~\ref{lem: nu Z} gives 
\begin{align*}
    \nu(\Z) + \frac{Lt}{3} &= \bar c \ts N + \left(\sqrt{\max\{\ts, 1\} \log N} \right) \left( \sqrt{\tilde\sigma^2 N \log N} \right) \\
    & \le \bar{c} \ts N + \mathcal{O} \left( \ts \log N \cdot \sqrt{N} \right) \\ 
    & = \O \left( \ts N \right).
\end{align*}
It follows that there is some constant $\hat c > 0$ such that
\begin{equation*}
\frac{t^2/2}{\nu(\Z) + \frac{Lt}{3}} \ge  \frac{c^2 \ts N \log N}{\hat c \ts N} = \frac{c^2}{\hat c} \log N.
\end{equation*}
Applying~\eqref{brninq}, we have 
$$
   \prob(\|\Z\| \ge t) \le (n + N) \left(\exp\left(-\frac{c^2}{\hat c} \log N\right) \right) \le 2 N^{-5}
$$
for $c$ sufficiently large.


Next, suppose that 
$$
    t = (\log N)^{3/2} \ge \tb.
$$
Then 
$$
     \nu(\Z) + \frac{Lt}{3} = \O\left((\log N)^2 \right)
$$ 
and 
$$
    \frac{t^2/2}{\nu(\Z) + \frac{Lt}{3}} \ge  \frac{c^2 (\log N)^3}{\hat c(\log N)^2} = \hat c \log N.
$$
Again, it follows that 
$$
   \prob(\|\Z\| \ge t) \le 2 N^{-5}
$$
for sufficiently large $c$ in the choice of $t$.
This completes the proof of Lemma~\ref{lem4.4}.
\qed

\end{appendix}

\end{document}